\theoremstyle{plain}
\newtheorem*{theorem*}{Theorem}
\newtheorem{theorem}{Theorem}[section] 
\newtheorem{lemma}[theorem]{Lemma}
\newtheorem{proposition}[theorem]{Proposition}
\newtheorem{corollary}[theorem]{Corollary}
\newtheorem{example}[theorem]{Example}
\theoremstyle{definition}
\newtheorem{definition}[theorem]{Definition}
\newtheorem{remark}[theorem]{Remark}
\numberwithin{equation}{section}
\newcommand{\eps}{\varepsilon}
\newcommand{\NN}{\mathbb{N}}
\newcommand{\ZZ}{\mathbb{Z}}
\newcommand{\RR}{\mathbb{R}}
\newcommand{\CC}{\mathbb{C}}
\newcommand{\DD}{\mathbb{D}}
\newcommand{\Ordo}{\mathcal{O}}
\newcommand{\EE}{\mathbb{E}}
\DeclareMathOperator{\im}{Im}
\DeclareMathOperator{\re}{Re}
\renewcommand{\d}{\,\mathrm{d}}
\renewcommand{\i}{\mathrm{i}}
\newcommand{\e}{\mathrm{e}}
\DeclareMathOperator{\Tr}{Tr}
\title{Domino tilings of the Aztec diamond with doubly periodic weightings}
\author{Tomas Berggren\footnote{Department of Mathematics, 
	Royal Institute of Technology (KTH),
	Stockholm, Sweden. Email: tobergg@kth.se. Supported by the Swedish Research Council grant (VR) Grant no. 2016-05450 and the G\"oran Gustafsson Foundation.}}
\date{}
\begin{document}

\maketitle

\begin{abstract}
 In this paper we consider domino tilings of the Aztec diamond with doubly periodic weightings. In particular a family of models which, for any $ k \in \NN $, includes models with $ k $ smooth regions is analyzed as the size of the Aztec diamond tends to infinity. We use a non-intersecting paths formulation and give a double integral formula for the correlation kernel of the Aztec diamond of finite size. By a classical steepest descent analysis of the correlation kernel we obtain the local behavior in the smooth and rough regions as the size of the Aztec diamond tends to infinity. From the mentioned limit the macroscopic picture such as the arctic curves and in particular the number of smooth regions is deduced. Moreover we compute the limit of the height function and as a consequence we confirm, in the setting of this paper, that the limit in the rough region fulfills the complex Burgers' equation, as stated by Kenyon and Okounkov. 
\end{abstract}

\section{Introduction}\label{sec:model}

\subsection{The $ 2\times k $-periodic Aztec diamond}\label{sec:aztec_diamond}

Consider the subset of $ \ZZ^2 $
\begin{equation}
 \{(x,y)\in \ZZ^2:|x-(N-\frac{1}{2})|+|y-(N-\frac{1}{2})|\leq N\},
\end{equation}
where $ N \in \NN $ is a fixed number and let $ G $ be the graph formed by the square lattice connecting these points. Color each vertex $ (x,y) $ in $ G $ in white if $ x-N+y $ is even and in black otherwise. The graph $ G $ is bipartite and called the \emph{Aztec diamond graph} of size $ N $. A \emph{dimer covering} $ \mathcal M $ of $ G $, also called a \emph{perfect matching}, is a pairing of the vertexes in $ G $ where each pair consists of adjacent vertexes with different colors, such pairing, or edge, is called a \emph{dimer}. Set a weight on each edge in the graph and let the weight of a dimer covering $ w(\mathcal M) $ be the product of the edge-weights corresponding to each dimer in $ \mathcal M $. A natural probability measure on the space of all dimer coverings on $ G $ is given by
\begin{equation}\label{eq:sec_model:probability_dimer}
 \mathbb P (\mathcal M) = \frac{w(\mathcal M)}{\sum_{\mathcal M'}w(\mathcal M')}, \quad w(\mathcal M) = \prod_{\text{dimer} \in \mathcal M} w(\text{dimer}),
\end{equation}
where the sum is over all possible dimer coverings of $ G $.

The \emph{Aztec diamond} is the union of the faces of the dual graph of the Aztec diamond graph. Color each face according to the bipartite graph. A \emph{domino} consists of two adjacent faces and a \emph{domino tiling} of the Aztec diamond is a cover of the Aztec diamond with dominoes such that no two dominoes intersect. There is a simple bijection between the domino tilings of the Aztec diamond and dimer coverings of the Aztec diamond graph. Namely, a dimer connecting $ (x,y) $ and $ (x',y') $ becomes the domino consisting of the two faces with midpoints at $ (x,y) $ and $ (x',y') $, and vice versa (Figure \ref{fig:bijection_dimer_tiling}). This induces a probability measure on the space of domino tilings of the Aztec diamond from the probability measure defined above on the dimer model.

\begin{figure}[t]
	\begin{center}
		\begin{tikzpicture}[scale=.6]

		\foreach \x in {0,1,2,3}
		{\draw (-.5-\x,\x-2.5)--(.5+\x,\x-2.5);
			\draw (\x-3.5,\x+1.5)--(3.5-\x,\x+1.5);
			\draw (\x-3.5,.5-\x)--(\x-3.5,1.5+\x);
			\draw (\x+.5,\x-2.5)--(\x+.5,4.5-\x);
		}
		
		\foreach \x/\y in {-4/0,-3/1,-2/2,-3/-1,-1/-1,0/-2}
		{ 
			\draw[line width = 1mm] (\x+.5,\y+.5)--(\x+.5,\y+1.5);
			\draw (\x+.5,\y+.5) node[circle,draw=black,fill=black,inner sep=2pt]{};
			\draw (\x+.5,\y+1.5) node[circle,draw=black,fill=white,inner sep=2pt]{};
		}
		
		\foreach \x/\y in {-2/-1,1/-2,2/-1,3/0,2/1,1/2}
		{
			\draw[line width = 1mm] (\x+.5,\y+.5)--(\x+.5,\y+1.5);
			\draw (\x+.5,\y+.5) node[circle,draw=black,fill=white,inner sep=2pt]{};
			\draw (\x+.5,\y+1.5) node[circle,draw=black,fill=black,inner sep=2pt]{};
		}
		
		\foreach \x/\y in {-1/-3,-2/-2,-1/3,0/0}
		{
			\draw[line width = 1mm] (\x+.5,\y+.5)--(\x+1.5,\y+0.5);
			\draw (\x+.5,\y+.5) node[circle,draw=black,fill=black,inner sep=2pt]{};
			\draw (\x+1.5,\y+.5) node[circle,draw=black,fill=white,inner sep=2pt]{};
		}
		
		\foreach \x/\y in {-2/1,0/1,-1/2,-1/4}
		{
			\draw[line width = 1mm] (\x+.5,\y+.5)--(\x+1.5,\y+0.5);
			\draw (\x+.5,\y+.5) node[circle,draw=black,fill=white,inner sep=2pt]{};
			\draw (\x+1.5,\y+.5) node[circle,draw=black,fill=black,inner sep=2pt]{};
		}
		
		\end{tikzpicture}
		\quad
		\begin{tikzpicture}[scale=0.55]
		
		\foreach \x/\y in {-4/0,-3/1,-2/2,-3/-1,-1/-1,0/-2}
		{ \fill[outer color=lightgray,inner color=gray]
			(\x,\y) rectangle (\x+1,\y+1); 
			\draw [line width = 1mm] (\x,\y) rectangle (\x+1,\y+2);
		}
		
		\foreach \x/\y in {-2/-1,1/-2,2/-1,3/0,2/1,1/2}
		{  \fill[outer color=lightgray,inner color=gray]
			(\x,\y+1) rectangle (\x+1,\y+2);
			\draw [line width = 1mm] (\x,\y) rectangle (\x+1,\y+2);
		}
		
		\foreach \x/\y in {-1/-3,-2/-2,-1/3,0/0}
		{  \fill[outer color=lightgray,inner color=gray]
			(\x,\y) rectangle (\x+1,\y+1);
			\draw [line width = 1mm] (\x,\y) rectangle (\x+2,\y+1);
		}
		
		\foreach \x/\y in {-2/1,0/1,-1/2,-1/4}
		{  \fill[outer color=lightgray,inner color=gray]
			(\x+1,\y) rectangle (\x+2,\y+1);
			\draw [line width=1mm] (\x,\y) rectangle (\x+2,\y+1);
		}
		\end{tikzpicture}
		\caption{A sample of a dimer covering of the Aztec diamond graph and corresponding domino tiling of the Aztec diamond. \label{fig:bijection_dimer_tiling}} 
	\end{center}
\end{figure}
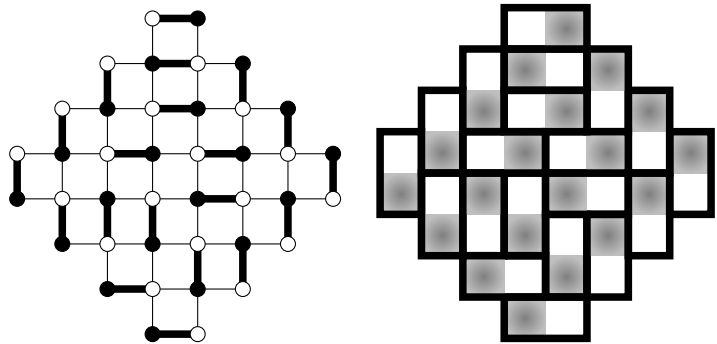

In the present paper we consider a family of \emph{doubly periodic weightings}, the edge weights are periodic in two linearly independent directions, of the Aztec diamond, which was introduced in \cite{DS14}. The model is defined as follows. Set a weight on each face of the Aztec diamond graph, the weight on the edges are defined as the product of its two adjacent faces. Fix a $ k \in \ZZ $. Let the weight on the face with down left corner at $ (i,j) $ (Figure \ref{fig:weights}) be $ a_{i,j} $ with the conditions 
 \begin{equation}
  a_{i,j} = a_{i+2,j-2} \text{ and } a_{i,j} = a_{i+k,j+k}.
 \end{equation}
 We refer to the model defined by the above doubly periodic weighting of the Aztec diamond as the \emph{$ 2 \times k $-periodic Aztec diamond}. In \cite{DS14} the model is referred to as the \emph{$ T $-system with $ k $-toroidal initial data} (we have rotate the diamond by $ \frac{\pi}{2} $ counter clockwise compared with the notation in \cite{DS14}). For simplicity we take the size of the Aztec diamond as $ kN $ with $ N $ even.

The weights given above are not unique in the sense that there are different weightings defining the same probability measure. For instance, let $ b_i $ for $ i=1,\dots,4 $, be the weights on four edges having a common vertex. Taking $ bb_i $ for $ i=1,\dots,4 $ and $ b>0 $ as new weights does not change the probability measure (Figure \ref{fig:gauge_transformation}). This change of weights is called a \emph{gauge transformation}. That a gauge transformation does not change the probability measure is clear since exactly one of the four edges intersecting a common vertex is in each dimer covering, so it gives a factor $ b $ extra to each weight of a dimer covering. 
\begin{figure}[ht]
\begin{center}
 \begin{tikzpicture}[scale=.9]
  {\draw (0,0) node[circle,fill,inner sep=2pt]{};
   \draw (-1,0)--(1,0);
   \draw (0,-1)--(0,1);
   \draw (0,.7) node[left]{$b_1$};
   \draw (0,-.7) node[left]{$b_3$};
   \draw (-.7,0) node[above]{$b_4$};
   \draw (.7,0) node[above]{$b_2$};
   }
   \draw (1.5,0) node{$ \sim$};
 \end{tikzpicture}
 \begin{tikzpicture}[scale=.9]
  {\draw (0,0) node[circle,fill,inner sep=2pt]{};
   \draw (-1,0)--(1,0);
   \draw (0,-1)--(0,1);
   \draw (0,.7) node[left]{$bb_1$};
   \draw (0,-.7) node[left]{$bb_3$};
   \draw (-.7,0) node[above]{$bb_4$};
   \draw (.7,0) node[above]{$bb_2$};
   }
 \end{tikzpicture}
\end{center}
\caption{A gauge transformation. \label{fig:gauge_transformation}}
\end{figure}
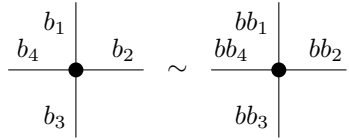

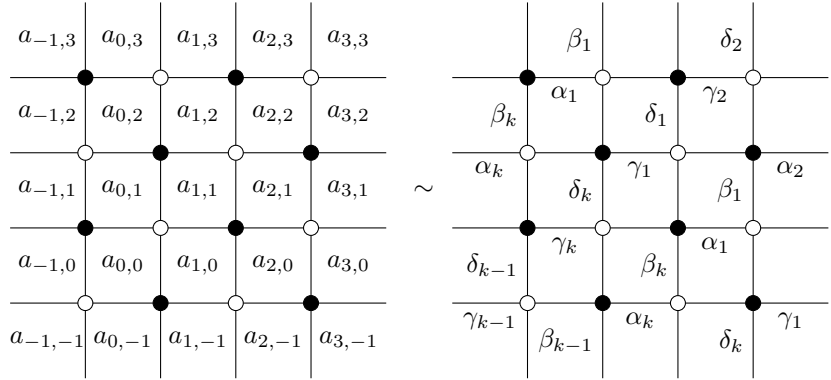
\begin{figure}[t]
\begin{center}
 \begin{tikzpicture}
 \foreach \x in {0,...,3}
 {\draw (-1,\x)--(4,\x);
   \draw (\x,-1)--(\x,4);
 }
 \foreach \x in {-1,0,...,3}
 {
 \foreach \y in {-1,0,...,3}
 {\draw (\x+.5,\y+.5) node {$a_{\x,\y}$};
 }
 }
 \foreach \x in {0,2}
 \foreach \y in {0,2}
 {\draw (\x ,\y) node[circle,draw=black,fill=white,inner sep=2pt]{};
 }
 \foreach \x in {1,3}
 \foreach \y in {1,3}
 {\draw (\x ,\y) node[circle,draw=black,fill=white,inner sep=2pt]{};
 }
 \foreach \x in {0,2}
 \foreach \y in {1,3}
 {\draw (\x ,\y) node[circle,draw=black,fill=black,inner sep=2pt]{};
 }
 \foreach \x in {1,3}
 \foreach \y in {0,2}
 {\draw (\x ,\y) node[circle,draw=black,fill=black,inner sep=2pt]{};
 }
 \draw (4.5,1.5) node {$ \sim $};
 \end{tikzpicture}
 \begin{tikzpicture}
 \foreach \x in {0,...,3}
 {\draw (-1,\x)--(4,\x);
   \draw (\x,-1)--(\x,4);
 }
 \foreach \x in {0,2}
 \foreach \y in {0,2}
 {\draw (\x ,\y) node[circle,draw=black,fill=white,inner sep=2pt]{};
 }
 \foreach \x in {1,3}
 \foreach \y in {1,3}
 {\draw (\x ,\y) node[circle,draw=black,fill=white,inner sep=2pt]{};
 }
 \foreach \x in {0,2}
 \foreach \y in {1,3}
 {\draw (\x ,\y) node[circle,draw=black,fill=black,inner sep=2pt]{};
 }
 \foreach \x in {1,3}
 \foreach \y in {0,2}
 {\draw (\x ,\y) node[circle,draw=black,fill=black,inner sep=2pt]{};
 }
 {\draw (-.5,0) node[below] {$\gamma_{k-1}$};
 \draw (0,.5) node[left] {$\delta_{k-1}$};
 \draw (1,-.5) node[left] {$\beta_{k-1}$};
 
 \draw (-.5,2) node[below] {$\alpha_k$};
 \draw (0,2.5) node[left] {$\beta_k$};
  \draw (.5,1) node[below] {$\gamma_k$};
 \draw (1,1.5) node[left] {$\delta_k$};
  \draw (1.5,0) node[below] {$\alpha_k$};
 \draw (2,.5) node[left] {$\beta_k$};
 \draw (3,-.5) node[left] {$\delta_k$};
 
 \draw (.5,3) node[below] {$\alpha_1$};
 \draw (1,3.5) node[left] {$\beta_1$};
  \draw (1.5,2) node[below] {$\gamma_1$};
 \draw (2,2.5) node[left] {$\delta_1$};
  \draw (2.5,1) node[below] {$\alpha_1$};
 \draw (3,1.5) node[left] {$\beta_1$};
  \draw (3.5,0) node[below] {$\gamma_1$};
  
 \draw (2.5,3) node[below] {$\gamma_2$};
 \draw (3,3.5) node[left] {$\delta_2$};
  \draw (3.5,2) node[below] {$\alpha_2$};
 }
 \end{tikzpicture}
\end{center}
\caption{The two weightings are equivalent in the sense that they define the same probability measure. The weights $ a_{i,j} $ are weights on the faces while $ \alpha_i $ and $ \gamma_i $ are weights on the edges above, and $ \beta_i $ and $ \delta_i $ are weights on the edges to the right. Moreover $ \alpha_i\gamma_i=1=\beta_i\delta_i $.\label{fig:weights}}
\end{figure}

We use a gauge transformation to simplify the weighting. Multiply each edge connected to the vertex $ (i,j) $ with $ a_{i-1,j}^{-1} $ if $ (i,j) $ is a black vertex and with $ a_{i,j-1}^{-1} $ if $ (i,j) $ is a white vertex. We obtain new weights on the edges, as indicated in Figure \ref{fig:weights}, $ \alpha_1,\dots,\alpha_k $, $\beta_1,\dots,\beta_k $, where $ \alpha_1 \cdots\alpha_k=\beta_1\cdots\beta_k $. The exact relation between $ a_{i,j} $ and $ \alpha_\ell, \beta_\ell $ is not important to us, but for book keeping the relation is given in the appendix. See Figure \ref{fig:2x3} for a sample of the $ 2\times k $-periodic Aztec diamond.

\begin{figure}[t]
 \begin{center}
  \includegraphics[scale=.45]{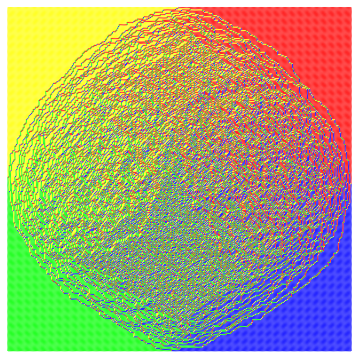}
  \hspace{.4cm}
  \includegraphics[scale=.45]{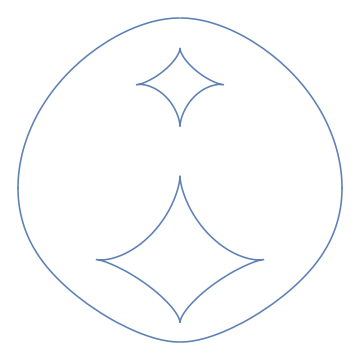}
 \end{center}
 \caption{An example of the $ 2\times 3$-periodic Aztec diamond with $ \alpha_1^{-1}=\alpha_2 = 0.3 $, $ \alpha_3=\beta_1=\beta_2=\beta_3=1 $. \label{fig:2x3}}
\end{figure}

In the present paper we use a non-intersecting paths perspective. The non-intersecting paths form a determinantal point process and thus allow us to draw conclusions about the model by analyzing the correlation kernel. This connection between domino tilings of the Aztec diamond and non-intersecting paths has been used many times before. See \cite{J02} for a careful description of this relation. For completeness we include a description here. 

Draw lines on the dominoes according to
\begin{center}
	\tikz[scale=.6]{ \fill[outer color=lightgray,inner color=gray](0,0) rectangle (1,1);
		\draw [line width = 1mm] (0,0) rectangle (1,2);
		\draw[very thick,red] (0,.5)--(1,1.5);}, \quad
	\tikz[scale=.6]{\fill[outer color=lightgray,inner color=gray](0,0) rectangle (1,1);
		\draw [line width = 1mm] (0,0) rectangle (2,1);
		\draw[very thick,red] (0,.5)--(2,.5);},
	\quad 
	\tikz[scale=.6]{\fill[outer color=lightgray,inner color=gray](0,1) rectangle (1,2);
		\draw [line width = 1mm] (0,0) rectangle (1,2);
		\draw[very thick,red] (0,1.5)--(1,0.5);}
	\quad and \quad
	\tikz[scale=.6]{\fill[outer color=lightgray,inner color=gray](1,0) rectangle (2,1);
		\draw [line width = 1mm] (0,0) rectangle (2,1);}.
\end{center}
These paths are called $ DR $-paths of type I, see e.g. \cite{J02}. Rotate the Aztec diamond clockwise by $ \frac{\pi}{4} $ and add an horizontal line of length one to each path at each integer step, that is, separate each path at the step $ i=1,\dots, kN, $ into the part of the path $ \leq i $ and the part $ > i $ and add an horizontal line of length one in between these parts. This procedure defines a bijective map between domino tilings of the Aztec diamond of size $kN $ and non-intersecting paths on the directed graph in Figure \ref{fig:tiling_paths} with start and end points at $ (0,j-1) $ for $ j=2n-kN+1,\ldots,2n $ respectively $ (2i-1,2n-kN) $ for $ i=1,\dots,kN $, where $ n\geq \frac{kN}{2} $ will be specified later, these paths corresponds to the red part in Figure \ref{fig:tiling_paths}. Under this bijection a probability measure on the domino tilings defines a probability measure on the non-intersecting paths and vice versa.

Continue the above constructed paths and add more paths to obtain $ 2n $ non-intersecting paths on the directed graph in Figure \ref{fig:tiling_paths} with start and end points at $ (0,u_0^j)=(0,j-1) $ respectively $ (2kN,u_{2kN}^j)=(2kN,-kN+j-1) $ for $ j=1,\ldots,2n $. Conversely, for any $ 2n $ non intersecting paths on the graph in Figure \ref{fig:tiling_paths} and with start and end points as above, the paths has to intersect the points $ (2i-1,2n-kN) $ for $ i=1,\dots,kN $. This means that the top part, the part above the points $ (2i-1,2n-kN) $ for $ i=1,\dots,kN $, of the $ 2n $ non-intersecting paths is independent of the rest of the non-intersecting paths. Hence, a probability measure on the domino tilings defines a probability measure on the $ 2n $ non-intersecting paths and vice versa. Finally these non intersecting paths have a one to one correspondence with $ 2kN \times 2 n $ points $ \{(m,u)\}_{m,u} \subset \{0,\dots,2kN\} \times \ZZ $, by considering the intersections of the non-intersecting paths and the vertical lines crossing the coordinate axes at an integer. If a paths intersect such line in multiple integer points take the first intersection from above.

By the above described procedure the $ 2 \times k $-periodic Aztec diamond form a point process defined by the probability measure given below.

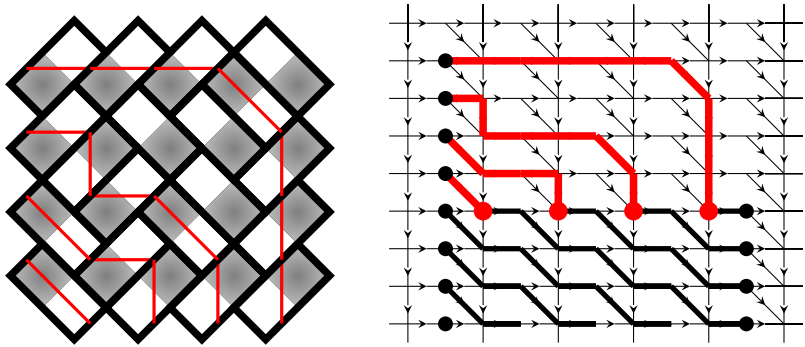
\begin{figure}[t]
	\begin{center}
		\begin{tikzpicture}[rotate=-45, scale=0.6]
		
		\foreach \x/\y in {-4/0,-3/1,-2/2,-3/-1,-1/-1,0/-2}
		{ \fill[outer color=lightgray,inner color=gray]
			(\x,\y) rectangle (\x+1,\y+1); 
			\draw [line width = 1mm] (\x,\y) rectangle (\x+1,\y+2);
			\draw[very thick,red] (\x,\y+.5)--(\x+1,\y+1.5);
		}
		
		\foreach \x/\y in {-2/-1,1/-2,2/-1,3/0,2/1,1/2}
		{  \fill[outer color=lightgray,inner color=gray]
			(\x,\y+1) rectangle (\x+1,\y+2);
			\draw [line width = 1mm] (\x,\y) rectangle (\x+1,\y+2);
			\draw[very thick,red] (\x,\y+1.5)--(\x+1,\y+.5);
		}
		
		\foreach \x/\y in {-1/-3,-2/-2,-1/3,0/0}
		{  \fill[outer color=lightgray,inner color=gray]
			(\x,\y) rectangle (\x+1,\y+1);
			\draw [line width = 1mm] (\x,\y) rectangle (\x+2,\y+1);
			\draw[very thick,red] (\x,\y+.5)--(\x+2,\y+.5);
		}
		
		\foreach \x/\y in {-2/1,0/1,-1/2,-1/4}
		{  \fill[outer color=lightgray,inner color=gray]
			(\x+1,\y) rectangle (\x+2,\y+1);
			\draw [line width=1mm] (\x,\y) rectangle (\x+2,\y+1);
		}
		\end{tikzpicture}
		\hspace{.5cm}
		\begin{tikzpicture}[scale=0.5]
		\tikzset{->-/.style={decoration={
					markings,
					mark=at position .5 with {\arrow{stealth}}},postaction={decorate}}}
		\foreach \y in {-4,-3,-2,-1,0,1,2,3,4}
		{
			\foreach \x in {0,1,2,3,4,5,6,7,8,9}
			{\draw[->,>=stealth] (\x-.5,\y)--(\x+.5,\y);
				\draw (9.5,\y)--(10.5,\y);
		}}
		\foreach \x in {0,2,4,6,8,10}
		{\foreach \y in {-4,-3,-2,-1,0,1,2,3}
			{\draw[-<,>=stealth] (\x,\y-.5)--(\x,\y+.5);
				\draw (\x,3.5)--(\x,4.5);}}
		\foreach \y in {-3,-2,-1,0,1,2,3,4}
		{\foreach \x in {0,2,4,6,8}
			\draw [->-](\x+1,\y)to(\x+2,\y-1);}
		\foreach \x/\y in {1/3,3/3,5/3,1/2,3/1,3/0}
		{
			\draw[line width = 1mm,red] (\x,\y)--(\x+1,\y);
		}
		
		\foreach \x/\y in {8/2,8/1,8/0,2/2,6/0,4/0}
		{
			\draw[line width = 1mm,red] (\x,\y)--(\x,\y-1);
		}
		
		\foreach \x/\y in {7/3,5/1,1/1,1/0}
		{
			\draw[line width = 1mm,red] (\x,\y)--(\x+1,\y-1);
		}
		
		\foreach \x/\y in {2/3,4/3,6/3,2/1,4/1,2/0}
		{
			\draw[line width = 1mm,red] (\x,\y)--(\x+1,\y);
		}
		\foreach \y in {-3,-2,-1}
		{\foreach \x in {2,4,6,8}
		{\draw[line width = .7mm] (\x-1,\y)--(\x,\y-1);
		\draw[line width = .7mm] (\x,\y)--(\x+1,\y);
		\draw[line width = .7mm] (\x,-3)--(\x+1,-3);
		\draw[line width = .7mm] (\x,-4)--(\x+1,-4);}
		}
		\foreach \y in {-4,-3,-2,-1,0,1,2,3}
		{\draw (1,\y) node[circle,fill,inner sep=2pt]{};}
		\foreach \y in {-4,-3,-2,-1}
		{\draw (9,\y) node[circle,fill,inner sep=2pt]{};}
		\foreach \x in {2,4,6,8}
		{\draw (\x,-1) node[circle,fill=red,inner sep=2.5pt]{};}
		\end{tikzpicture}
		\caption{A sample of a domino tiling of the Aztec diamond and the corresponding non-intersecting paths. The sample is the same as in Figure \ref{fig:bijection_dimer_tiling}. \label{fig:tiling_paths}}
	\end{center}
\end{figure}

The point process mentioned above is the probability measure
\begin{equation}\label{eq:sec_model:measure}
 \frac{1}{Z_{n,N}}\prod_{m=1}^{2kN} \det \left( T_{\phi_m}(u_{m-1}^j,u_{m}^k)\right)_{j,k=1}^{2n},
\end{equation} 
defined on $ \{(m,u)\}_{m,u} \subset \{0,\dots,2kN\} \times \ZZ $ where 
\begin{equation}
 u_0^j=j-1 \text{ and } u_{2kN}^j=-kN+j-1 \text{ for } j=1,\ldots,2n,
\end{equation} 
 see \cite[Proposition 5.1]{BD19}. Here the \emph{transition matrix} $ T_{\phi_m} $ is a block Toeplitz matrix given by
\begin{equation}
 T_{\phi_m}(2x+r,2y+s)= \left(\hat \phi_m(y-x)\right)_{r+1,s+1},
 \quad r,s=0,1, \quad x,y \in \ZZ,
\end{equation}
with symbols
\begin{equation}
\phi_{2m-1}(z)  =  
\begin{pmatrix}
1 & \alpha_m z^{-1}\\
\alpha_m^{-1} & 1
\end{pmatrix},
\quad
\phi_{2m}(z) = \frac{1}{1-z^{-1}} 
\begin{pmatrix}
1 & \beta_m z^{-1}\\
\beta_m^{-1} & 1
\end{pmatrix}, 
\end{equation}
for $ m = 1,2,\dots,kN $. To each weighting we associate a matrix-valued function defined as the product over one period of the symbols of the transition matrices,
\begin{equation}\label{eq:sec_model:weight}
\Phi(z) = \frac{1}{(1-z^{-1})^k}\prod_{m=1}^k
\begin{pmatrix}
1 & \alpha_m z^{-1}\\
\alpha_m^{-1} & 1
\end{pmatrix}
\begin{pmatrix}
1 & \beta_mz^{-1}\\
\beta_m^{-1} & 1
\end{pmatrix}.
\end{equation}

The measure \eqref{eq:sec_model:measure} is a determinantal point process, \cite{DK17, J06}, that is,
\begin{multline}\label{eq:sec_model:determinantal}
  \mathbb P \left( \text{ points at } (m_1,2\xi_1+i_1), \ldots, (m_\ell,2\xi_d+i_d)\right) \\
  = \det \left(K(m_j,2\xi_j+i_j;m_\ell,2\xi_\ell+i_\ell)\right)_{j,\ell=1}^d,
\end{multline}
where $ u=2(\xi+n)+i $ for $ i=0,1 $, and the function $ K $ is the \emph{correlation kernel}. 
 
\subsection{The correlation kernel}\label{sec:correlation_kernel}

Our first result is an explicit double integral formula of the correlation kernel in \eqref{eq:sec_model:determinantal}. The explicit double integral formula of the correlation kernel is the starting point of the asymptotic analysis of the $ 2\times k $-periodic Aztec diamond. We prove the double integral formula for a slightly more general weighting than the one discussed above.

 Let $ \alpha_m,\beta_m,\gamma_m>0 $ and consider the probability measure \eqref{eq:sec_model:measure} with 
\begin{equation}
\phi_{2m-1}(z)  =  
\begin{pmatrix}
\gamma_m & \alpha_m z^{-1}\\
\alpha_m^{-1} & \gamma_m^{-1}
\end{pmatrix},
\quad
\phi_{2m}(z) = \frac{1}{1-z^{-1}} 
\begin{pmatrix}
1 & \beta_m z^{-1}\\
\beta_m^{-1} & 1
\end{pmatrix}. 
\end{equation}
The associated matrix-valued function is given by
\begin{equation}\label{eq:sec_model:generalized_weight}
\Phi(z) = \frac{1}{(1-z^{-1})^k}\prod_{m=1}^k
\begin{pmatrix}
\gamma_m & \alpha_m z^{-1}\\
\alpha_m^{-1} & \gamma_m^{-1}
\end{pmatrix}
\begin{pmatrix}
1 & \beta_mz^{-1}\\
\beta_m^{-1} & 1
\end{pmatrix}.
\end{equation}
Let $ \rho_1(z) $ and $ \rho_2(z) $ be the eigenvalues of $ \Phi(z) $ determined for $ z\in \CC \backslash (-\infty,0] $ by the inequality $ |\rho_2(z)|<|\rho_1(z)| $ and by continuity from the upper half plane for $ z\in (-\infty,0] $. For some particular parts of $ (-\infty,0] $ the strict inequality still holds, while on other parts the inequality becomes an equality, see Section \ref{sec:properties_of_eigenvalues} and in particular Lemma \ref{lem:eigenvalues}. Let $ E $ be such that
 \begin{equation}
 \Phi(z) = E(z)
 \begin{pmatrix}
 \rho_1(z) & 0 \\
 0 & \rho_2(z)
 \end{pmatrix}
 E(z)^{-1}.
 \end{equation}
 Such $ E $ exists for all but a finite set of points and such diagonalization of $ \Phi $ is only necessary for our purposes away from this finite set of points.

If $ \gamma_m=1 $ and $ \alpha_1\cdots\alpha_k=\beta_1\cdots\beta_k $ the $ 2\times k $-periodic Aztec diamond is recovered. Note that the above still defines a periodic weighting of the Aztec diamond. These weightings are included here because they give rise to some new interesting pictures that can not be obtained in the $ 2\times k $-periodic Aztec diamond. For instance tilings with cusps in the boundary between the frozen region and rough region, see Figure \ref{fig:not_included_models}. We hope to come back with the asymptotic analysis of this generalized model in future work. Our first main result is the following theorem. The proof is given in Section \ref{sec:proof_main_theorem}.

\begin{figure}[t]
 \begin{center}
  \includegraphics[scale=.45]{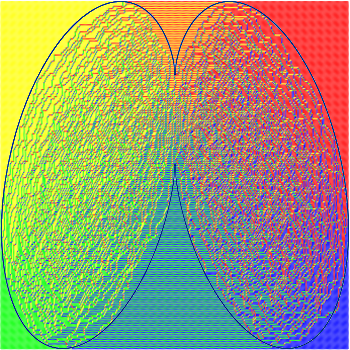}
  \hspace{.5cm}
  \includegraphics[scale=.45]{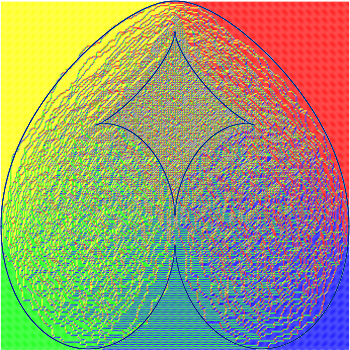}
 \end{center}
 \caption{Two examples of periodic weighting of the Aztec diamond which is covered by Theorem \ref{thm:finite_kernel} but not studied in the rest of the paper. \label{fig:not_included_models}}
\end{figure}

\begin{theorem}\label{thm:finite_kernel}
Consider the Aztec diamond with a doubly periodic weighting with associated matrix-valued function \eqref{eq:sec_model:generalized_weight} of size $ kN\times kN $ defined above with $ N $ even. The correlation kernel of the underlying determinantal point process \eqref{eq:sec_model:determinantal} is given by 
 \begin{multline}
  \left[K(2km,2\xi+i;2km',2\xi'+j)\right]_{i,j=0}^1 \\
  = -\frac{\mathds{1}_{m>m'}}{2\pi\i}\oint_{\gamma_{0,1}} \Phi(z)^{m-m'}z^{\xi'-\xi}\frac{\d z}{z} \\
  + \frac{1}{(2\pi\i)^2}\oint_{\gamma_1}\oint_{\gamma_{0,1}} \frac{w^{\xi'}}{z^{\xi+1}}\frac{(1-z^{-1})^{\frac{kN}{2}}}{(1-w^{-1})^{\frac{kN}{2}}}\rho_1(w)^{\frac{N}{2}-m'} \\
  \times E(w)
  \begin{pmatrix}
   1 & 0 \\
   0 & 0
  \end{pmatrix}
  E(w)^{-1}\Phi(z)^{m-\frac{N}{2}}\frac{\d z\d w}{z-w}, \\
  -\frac{kN}{2}\leq \xi,\xi' \leq -1, \quad 0<m,m'<N, \quad \xi,\xi',m,m'\in \ZZ.
 \end{multline}
 Here $ \gamma_1 $ is a simple closed curve around $ 1 $ and with $ 0 $ in its exterior and $ \gamma_{0,1} $ a simple closed curve around $ 0 $ and $ \gamma_1 $, both intersects the real line exactly at two points.
\end{theorem}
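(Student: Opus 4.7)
The plan is to start from the Eynard--Mehta/LGV representation of the correlation kernel for the non-intersecting paths determinantal process \eqref{eq:sec_model:measure}. In that representation the kernel splits as
\begin{equation}
K(m,u;m',u') = -\mathds{1}_{m>m'}\,[T_{m'+1}\cdots T_m](u,u') + \sum_{j,k=1}^{2n}[T_1\cdots T_m](u_0^k,u)\,(A^{-1})_{kj}\,[T_{m'+1}\cdots T_{2kN}](u',u_{2kN}^j),
\end{equation}
where $A$ is the $2n\times 2n$ Gram matrix with entries $A_{jk}=[T_1\cdots T_{2kN}](u_0^k,u_{2kN}^j)$. The combined symbol over one full period of length $2k$ is precisely $\Phi(z)$, so that $T_1\cdots T_{2kN}$ is block Toeplitz with symbol $\Phi(z)^N$; partial products yield $\Phi(z)^p$ times a residual factor of the form $\phi_1\cdots\phi_{2r}$ for $0\le r<k$. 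Writing each entry as a contour integral against the appropriate Fourier coefficient immediately converts the first term into the single-integral expression $-\mathds{1}_{m>m'}\oint_{\gamma_{0,1}}\Phi(z)^{m-m'}z^{\xi'-\xi}\,\mathrm{d}z/z$.

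Next I would handle the double sum, whose difficulty is concentrated in inverting $A$. Since the initial points $u_0^j=j-1$ and terminal points $u_{2kN}^j=-kN+j-1$ are consecutive integers, $A$ has a near-Vandermonde block-Toeplitz structure. The trick is to use the eigendecomposition $\Phi(z)=E(z)\mathrm{diag}(\rho_1(z),\rho_2(z))E(z)^{-1}$ so that $\Phi(z)^N=\rho_1(z)^N E_1(z)+\rho_2(z)^N E_2(z)$ with the rank-one spectral projections $E_i(z)$. Because $|\rho_2|<|\rho_1|$, the dominant eigenvalue contributes the part of $A$ that survives in an asymptotic sense, and after accounting for the Vandermonde cancellations coming from consecutive endpoints (which produce the $(1-z^{-1})^{kN/2}$ and $(1-w^{-1})^{kN/2}$ factors through the identity $\det(z_j^k)_{j,k}\propto\prod(z_j-z_k)$ applied to the contour variables), one obtains that the $A^{-1}$-weighted sum collapses to a contour integral against $\rho_1(w)^{-N}E(w)\begin{pmatrix}1&0\\0&0\end{pmatrix}E(w)^{-1}$. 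This is the key computation; concretely I would realize it by introducing a $w$-integral representation of $A^{-1}_{kj}$ and checking, via Cauchy's formula and the biorthogonality of the spectral projections, that the resulting double contour integral reproduces the Eynard--Mehta kernel on a dense set of inputs.

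Having performed these substitutions, I would multiply the propagators: on the left of $E(w)\begin{pmatrix}1&0\\0&0\end{pmatrix}E(w)^{-1}$ sits $[T_1\cdots T_m]$, which in $z$ contributes $\Phi(z)^{m-N/2}$ together with $\rho_1(w)^{N/2-m'}$ on the right coming from $[T_{m'+1}\cdots T_{2kN}]$, after the exponent of $\rho_1(w)$ has been rebalanced by pulling $\rho_1(w)^{N/2}$ out of $A^{-1}$. The $\xi,\xi'$-dependence becomes $w^{\xi'}/z^{\xi+1}$ after the Fourier inversion in the spatial variable, and the factor $1/(z-w)$ appears from summing the geometric series generated by the residual telescoping of the block structure (this is the same mechanism that produces the Cauchy kernel in the Borodin--Ferrari and Duits--Kuijlaars treatments of block-Toeplitz determinantal processes). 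Finally I would choose the contours $\gamma_1$ and $\gamma_{0,1}$ so that the $w$-contour encloses only the singularity at $1$ coming from $(1-w^{-1})^{-kN/2}$ while the $z$-contour additionally encloses $0$ and $\gamma_1$, which is dictated by the requirement that the Fourier inversions converge on the correct side.

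The main obstacle I anticipate is the explicit inversion of $A$: the spectral decomposition of $\Phi$ is only valid away from a finite exceptional set, the eigenvalues $\rho_1,\rho_2$ live on a branched cover, and the matrix $E(w)$ need not be single-valued. Handling this cleanly requires the analyticity and uniqueness properties of $E$ stated in Section \ref{sec:properties_of_eigenvalues} (Lemma \ref{lem:eigenvalues}) and a careful contour choice so that only $\rho_1$ contributes inside $\gamma_1$. Once the inversion is justified on the regular set, the stated double-integral formula extends by continuity to all admissible $(m,m',\xi,\xi')$.
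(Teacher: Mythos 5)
Your proposal starts in the right place (Eynard--Mehta representation, block-Toeplitz structure with symbol $\Phi(z)^N$, spectral decomposition of $\Phi$), and the final bookkeeping with $w^{\xi'}/z^{\xi+1}$ and the Cauchy factor $1/(z-w)$ is the correct target. However, the middle of the argument — the actual inversion of the Gram matrix $A$ — contains two substantive gaps, and it obscures the device that makes the paper's proof work.

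First, the claim that the $(1-z^{-1})^{kN/2}$ and $(1-w^{-1})^{kN/2}$ factors arise from ``Vandermonde cancellations coming from consecutive endpoints'' is not correct. These factors are already sitting inside the symbols: each $\phi_{2m}(z)$ carries a prefactor $1/(1-z^{-1})$, and what the theorem is really encoding is a specific Wiener--Hopf type factorization $\phi = \phi_+\phi_- = \widetilde\phi_-\widetilde\phi_+$ of $\phi = \Phi^N$ in which all the singular $1/(1-z^{-1})$ pieces are moved to the ``minus'' side and all the analytic-at-$0$ pieces to the ``plus'' side. The paper constructs this factorization explicitly by an algebraic switching identity for $2\times 2$ factors (equation \eqref{eq:proof_main_theorem:switching}) — not by Vandermonde manipulations — and reads off the $(1-z^{-1})^{\mp kN/2}$ normalizations as the limits $\prod_{\text{even}}\phi'_{a,m}(z)\to(1-z^{-1})^{-kN/2}\Phi(z)^{N/2}$. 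Attempting to invert $A$ directly and extract these factors by Vandermonde identities on consecutive integer endpoints is in effect an attempt to reprove \cite[Theorem 3.1]{BD19} from scratch; that theorem is the piece of machinery that converts the Eynard--Mehta inversion into a double-contour kernel in terms of a given Wiener--Hopf factorization, and the paper cites it rather than reproves it.

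Second, your statement that ``because $|\rho_2|<|\rho_1|$, the dominant eigenvalue contributes the part of $A$ that survives in an asymptotic sense'' misidentifies why only $\rho_1$ appears. Theorem \ref{thm:finite_kernel} is an \emph{exact} formula for the finite-size Aztec diamond, not an asymptotic one, and the $\rho_2$ piece does not drop because it is subdominant. In the paper's proof it drops because, after the eigendecomposition, the $w$-integrand for the $\rho_{a,2}$ term is analytic inside the $w$-contour $\gamma_a$ — an exact cancellation by Cauchy's theorem, verified via the claim about the analyticity and zero/pole structure of $\rho_{a,1},\rho_{a,2}$ near $z=1$. Reaching that point also requires a regularization you do not mention: $\Phi$ has a pole at $z=1$ on the unit circle, so no Wiener--Hopf factorization of $\Phi^N$ exists on $|z|=1$; the paper replaces the weights by an $a$-dependent family $\phi_{a,m}$ with $0<a<1$, factors that, and then takes $a\to 1$. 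Without this deformation the hypotheses of \cite[Theorem 3.1]{BD19} fail, and without the analyticity argument the replacement of $\Phi_a(w)^{N-m'}$ by its single $\rho_{a,1}$-spectral piece is unjustified. These two points — the explicit switching-rule factorization and the regularization with the exact (not asymptotic) discard of the $\rho_2$ piece — are the core of the paper's proof and need to be supplied for the argument to go through.
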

Here and in the rest of the paper $ \mathds{1}_B $ is the indicator function of the set $ B $.

\subsection{Previous results}

The Aztec diamond was first introduced in \cite{EKLP92} where the number of possible tilings of the Aztec diamond of size $ N $ was computed. There are plenty results in the literature dealing with asymptotic results as $ N \to \infty $ of the uniform measure or the measure for which there is a bias of either the horizontal or the vertical dominoes. One of the first such result is the famous arctic circle theorem, which shows that the frozen and the unfrozen region (see below) are separated by a circle when $ N\to \infty $ \cite{JPS95}. Local properties has also been studied, such as placement probabilities, the probability of finding a specific domino on a given place \cite{CEP96}, that the so called zig-zag paths converges to the discrete sine process in the rough region \cite{J02} (compare with Example \ref{ex:sine_kernel}), that the paths separating the frozen and rough region converges to the Airy process minus a parabola \cite{J05}, at the turning points, the points where the rough region touch the boundary, there is a limit to the GUE minor process \cite{JN06}. Also, the fluctuation of the height function in the rough region tends to the Gaussian free field \cite{BG16, CJY15}, an alternating approach would be \cite{D18}.

It is known \cite{KOS06, NHB84} that for a planer bipartite graph with a doubly periodic weighting three different type of macroscopic regions may appear as $ N\to \infty $ . The \emph{frozen} region, also known as the solid region, the unfrozen region is divided into the \emph{rough unfrozen} region and the \emph{smooth unfrozen} region, or simply the rough region and smooth region, these are also known as the liquid respectively gas regions. These regions are characterized by the correlations between dimers as the distance increases \cite{KOS06}. Namely, the correlation does not decay, decay polynomially and decay exponentially for the frozen region, the rough region respectively the smooth region. The smooth region is only expected when the weighting is doubly periodic.

There are very few results on dimer models on a planer bipartite graph with a doubly periodic weighting. One of the reasons is that many of the above results concerning the Aztec diamond with uniform weighting or with a possible bias on one type of diamond can be obtain using that the model falls into the Schur process class \cite{OR03}. This is however not the case for doubly periodic weightings of the Aztec diamond. In \cite{DS14,KO07} the limit shape of planer bipartite graphs with a doubly periodic weighting are studied. In \cite{KO07} a variational problem for the limit shape is reduced to the complex Burgers' equation. As an example the \emph{arctic curves}, the limiting curves separating the different regions, for the two-periodic Aztec diamond is computed. In \cite{DS14} a generating function for the density function is obtained for the $ 2\times k $-periodic Aztec diamond in terms of a $ 4k\times 4k $ system of linear equations. In certain cases the solution is used to analyze the arctic curves. In \cite{KOS06} the authors compute the local correlations of a bipartite graph on the torus, where there are no boundary effects, with a doubly periodic weighting as the size of the mesh tends to zero.

However, before the present paper, the only doubly periodic planar dimer model for which the limit on the microscopic scale has been computed is, to the best of our knowledge, the two-periodic Aztec diamond. This was done in a sequence of papers \cite{BCJ18,CJ16,CY14} using the Kasteleyn approach and later in \cite{DK17} using a non-intersecting paths approach.

More precisely, in \cite{CY14} the authors invert the Kasteleyn matrix in terms of a generating function. In \cite{CJ16} the expression for the generating function is simplified. With this simplified formula the limit of the inverse Kasteleyn matrix is obtained. All three types of regions, frozen, rough and smooth, is present in the limit and the limit of the inverse Kasteleyn matrix is derived in all three. A more detailed analysis is done on the boundary between the frozen region and rough region respectively rough region and smooth region. Due to technical reason the limit is obtained along the main diagonal. In \cite{BCJ18} the investigation of the rough-smooth boundary is continued.

In \cite{DK17} the two-periodic Aztec diamond is expressed as a non-intersecting path model and the correlation kernel is given by a double integral formula. The integrand is given in terms of matrix-valued orthogonal polynomials and by solving a Riemann-Hilbert problem an exact expression of the correlation kernel of the Aztec diamond of finite size is given. A classical steepest descent analysis is then used to obtain the local correlation kernel inside the smooth region and at the cusp of the smooth region.

In fact, a double integral representation of the correlation kernel for a non-intersecting path model with a doubly periodic weighting, or more precisely, for a determinantal point process defined by a product of block Toeplitz determinants with matrix-valued symbols, is given in \cite{DK17}. The integrand is expressed in terms of a matrix-valued orthogonal polynomial. The block structure, and hence the fact that the polynomials are matrix-valued, reflects the doubly periodicity in the model. In a restricted, but still very broad setting, the asymptotic analysis as the degree of the polynomials tends to infinity is reduced to a Wiener-Hopf type factorization of a matrix-valued function \cite{BD19}. The reduction to a Wiener-Hopf type factorization is done using a Riemann-Hilbert problem approach. Moreover a recursive method, inspired by \cite{LP12}, is proposed in order to solve the matrix Wiener-Hopf factorization. For the $ 2 \times k $-periodic Aztec diamond the recursive method leads to a closed formula for the matrix Wiener-Hopf factorization, which is one of the main results of the present paper, see the proof of Theorem \ref{thm:finite_kernel}. There are also resent work \cite{CDKL19} using the formula of the correlation kernel given in \cite{DK17} which does not fall into the class covered in \cite{BD19}.

\subsection{An overview of the paper}

The contribution of the present paper is the analysis of the $ 2\times k $-periodic Aztec diamond. In fact, the $ 2\times k $-periodic Aztec diamond is the only model with multiple smooth regions that has been analyzed. In particular the picture indicated in \cite{DS14} is made exact. We also compute the limit of the expected height function, and the local correlation kernels, both in the smooth and rough regions.

The approach to the analysis of the $ 2\times k $-periodic Aztec diamond is much in the same spirit as the analysis of the Aztec diamond in \cite{J05}, in the sense that we take a non-intersecting paths perspective, use a Wiener-Hopf type factorization of the symbol of a block Toeplitz matrix to obtain a double integral formula of the correlation kernel and then perform a steepest descent analysis to obtain global and local results. However the block structure in the Toeplitz matrix, which reflects the doubly periodicity in the weighting and is therefore not present in \cite{J05}, is essential and makes the problem significantly more complicated. In fact, before \cite{BD19} it was far from obvious how to go through with the approach mentioned above. But also with the kernel in Theorem \ref{thm:finite_kernel} at our disposal the analysis is not straight forward, since the underlying Riemann surface has genus bigger than zero.

The paper is organized as follows. In Section \ref{sec:global_local_properties} the results of the $ 2\times k $-periodic Aztec diamond are presented. First of all the different global regions are defined. This is done using a function defined on a Riemann surface defined basically by the square root of the discriminant of $ \Phi $. The Riemann surface is compared with the spectral curve given in \cite{KOS06}. In Section \ref{sec:global_picture} the geometry of the arctic curves is explained which implies the number of smooth components, Theorem \ref{thm:number_of_smooth_components}. In Section \ref{sec:height_function_dimer} the global shape is obtained by the limit of the expectation of the height function. Moreover the limit in the rough region is compared with the first result in \cite{KO07}. Finally in Section \ref{sec:local_picture} the local correlations in the rough and smooth regions are given in terms of the limit of the correlation kernel.

The rest of the paper is devoted to the proofs of the results in Sections \ref{sec:model} and \ref{sec:global_local_properties}. In particular the proof of the local correlation kernels, Theorems \ref{thm:local_smooth} and \ref{thm:local_rough}, are given in Section \ref{sec:proof_local_correlations}. The core of the proof is that we have an explicit double integral formula for the correlation kernel, which allow us to do a steepest descent analysis and obtain the correlation kernel on the microscopic scale in the smooth and rough regions in the $ N $ limit. The proof of Theorem \ref{thm:finite_kernel} is given in Section \ref{sec:proof_main_theorem}.

\subsection*{Acknowledgments}

I thank Maurice Duits for many inspiring dicussions and for many fruitful comments on the manuscript. I thank Sunil Chhita for providing the code I used to simulate the random samples of the domino tilings of the Aztec diamond and also for inspiring dicussions. I thank Petter Br\"{a}nd\'{e}n for providing me with a proof of Lemma \ref{lem:real_zeros}.

\section{Statement of asymptotic results}\label{sec:global_local_properties}

In this section the asymptotic results on the $ 2\times k $-periodic Aztec diamond are given. For instance the arctic curves are described using a diffeomorphism between the rough region and a part of a Riemann surface. We start this section by defining this Riemann surface and relate it to the spectral curve given in \cite{KOS06}

\subsection{The spectral curve}

Consider the $ 2\times k $-periodic Aztec diamond and let $ \Phi $ be given in \eqref{eq:sec_model:weight}. The curve $ \det(\Phi(z)-\lambda I)=0 $ in $ \CC^2 $ determines the behavior on the macroscopic scale. In particular the branch points of the eigenvalues of $ \Phi $ determines the locations of the smooth regions and the arctic curves are parametrized in terms of the logarithmic derivative of the eigenvalues of $ \Phi $. 

Consider
\begin{equation}\label{eq:sec_model:def_p}
 p(z) = \left((z-1)^k\Tr \Phi(z)\right)^2-4(z-1)^{2k},
\end{equation}
the discriminant of $ (z-1)^k\Phi(z) $. Since each entry in $ (z-1) \phi_{2m-1}(z)\phi_{2m}(z) $ is a polynomial it follows that $ p $ is a polynomial. The zeros of $ p $ are precisely at the branch points of the eigenvalues of $ \Phi $. 
\begin{proposition}\label{prop:zeros_of_p}
 Consider the $ 2\times k $-periodic Aztec diamond with corresponding polynomial $ p $ given by \eqref{eq:sec_model:def_p}. The zeros of $ p $, denoted by $ s_\ell $, $ \ell=0,\dots,2(k-1) $, are real and fulfill the following ordering,
 \begin{equation}
  0=s_0>s_1\geq s_2 > s_3\geq s_4>\dots>s_{2k-3}\geq s_{2(k-1)}. 
 \end{equation}
\end{proposition}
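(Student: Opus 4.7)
The plan is to analyze the factorization $p(z) = u(z)\,v(z)$ with $u(z) = T(z) - 2(z-1)^k$ and $v(z) = T(z) + 2(z-1)^k$, where $T(z) := (z-1)^k \Tr \Phi(z)$. This factorization holds because $\det\Phi(z)\equiv 1$: each pair $\phi_{2m-1}(z)\phi_{2m}(z)$ contributes $(1-z^{-1})^{2}$ to the determinant, exactly cancelling the prefactor $(1-z^{-1})^{-k}$ in \eqref{eq:sec_model:weight}. Writing $A(z):=(z-1)^k\Phi(z) = \prod_{m=1}^k M_m(z)$ with
\[
M_m(z) = \begin{pmatrix} z+\alpha_m/\beta_m & \alpha_m+\beta_m \\ (\alpha_m^{-1}+\beta_m^{-1})\,z & z+\beta_m/\alpha_m \end{pmatrix},\qquad \det M_m(z) = (z-1)^2,
\]
a short leading-coefficient calculation gives $T(z) = 2z^k + (CB-2k)z^{k-1} + \dots$ with $B := \sum_m(\alpha_m+\beta_m) > 0$ and $C := \sum_m(\alpha_m^{-1}+\beta_m^{-1}) > 0$. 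Therefore $u$ has degree $k-1$ with positive leading coefficient $CB$, $v$ has degree $k$ with leading coefficient $4$, and $\deg p = 2k-1$, which already matches the claimed number of zeros.

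Next I would rule out zeros in $(0,\infty)$ via Perron--Frobenius: for $z>0$ each $M_m(z)$ is entrywise positive, so $A(z)$ is a $2\times 2$ strictly positive matrix with $\det A(z) = (z-1)^{2k} > 0$; its eigenvalues are then real, positive, and distinct, so $p(z) = (\lambda_1(z)-\lambda_2(z))^2 > 0$. At $z=0$ every $M_m(0)$ is upper triangular with diagonal $(\alpha_m/\beta_m,\beta_m/\alpha_m)$, and the constraint $\prod_m\alpha_m = \prod_m\beta_m$ makes $A(0)$ upper triangular with diagonal $(1,1)$; hence $T(0)=2$ and $p(0) = 0$. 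For simplicity of the zero at $0$ I would differentiate $A(z) = \prod M_m(z)$ at $z=0$: with $M_m'(0)$ the constant matrix having unit diagonal and $(2,1)$-entry $\alpha_m^{-1}+\beta_m^{-1}$, the trace of $M_1(0)\cdots M_m'(0)\cdots M_k(0)$ equals $\alpha_m/\beta_m + \beta_m/\alpha_m$ plus non-negative corrections coming from the upper-right entries of the $M_i(0)$, so summing over $m$ and applying AM--GM gives $T'(0)\geq 2k$, whence $p'(0) = 4T'(0) + 8k \geq 16k > 0$.

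The remaining $2k-2$ zeros lie in $(-\infty,0)$, and establishing their reality is the main obstacle. I would invoke Lemma~\ref{lem:real_zeros}, whose proof is credited to Petter Br\"and\'en in the acknowledgments; once $z<0$ the entries of $M_m(z)$ have mixed signs and Perron--Frobenius is no longer available, so real-rootedness must come from an essentially algebraic argument rather than any monotonicity one (reflecting the Harnack property of the underlying spectral curve).

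With reality in hand, the ordering is a sign-counting argument. The crucial observation is that $u$ and $v$ have no common zero on $(-\infty,0)$: a common zero $z_0$ would force $T(z_0)= 2(z_0-1)^k$ and $T(z_0)= -2(z_0-1)^k$ simultaneously, hence $(z_0-1)^k = 0$ and $z_0 = 1$, contradicting $z_0 < 0$. Therefore any equality $s_{2j-1} = s_{2j}$ in the ordering arises from a double zero of $u$ alone or of $v$ alone, while each strict inequality $s_{2j}>s_{2j+1}$ records a genuine sign change of $p$ across the corresponding band---equivalently, a nondegenerate open interval on which $\rho_1(z)$ and $\rho_2(z)$ form a complex-conjugate pair. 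Combining this with the degree count $k-1$ for $u$ and $k$ for $v$, the positivity of their leading coefficients, and the fact that $s_0 = 0$ belongs to whichever of $u$ or $v$ vanishes there (determined by the parity of $k$) pins down the alternating structure uniquely.
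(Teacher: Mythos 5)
Your overall strategy matches the paper's: factor $p = p_+p_-$ with $p_\pm(z) = (z-1)^k\Tr\Phi(z)\pm 2(z-1)^k$ (your $v$ and $u$), count degrees, invoke Lemma~\ref{lem:real_zeros} for reality, and then try to locate and order the zeros of the two factors. Two of your sub-arguments are genuine alternatives to the paper's: you rule out zeros on $(0,\infty)$ by Perron--Frobenius applied to the entrywise-positive matrix $(z-1)^k\Phi(z)$, whereas the paper instead shows that $p_\pm$ have non-negative coefficients; and you establish simplicity of the zero at $0$ by computing $p'(0) = 4T'(0)+8k \geq 16k > 0$ directly, whereas the paper extracts this from its interlacing lemma. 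Both of your alternatives are correct and self-contained, and the Perron route is arguably cleaner (you do need to treat $z=1$ separately, since $\det A(1)=0$, but there $p(1)=T(1)^2>0$).

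The genuine gap is in the final paragraph. The content of the proposition is precisely the ``semi-interlacing'' pattern
\[
s_0 > s_1 \geq s_2 > s_3 \geq s_4 > \cdots,
\]
i.e.\ the zeros of $p_+$ and $p_-$ must alternate on $(-\infty,0)$, possibly with doubled zeros \emph{within} one factor but never two zeros of the same factor occurring consecutively without a zero of the other factor in between. Your observations --- that $u$ and $v$ have no common zero, that their leading coefficients are positive, that their degrees are $k-1$ and $k$, and that exactly one of them vanishes at $0$ --- are all necessary but not sufficient; they are equally consistent with, say, $v$ having roots $\{0,-1,-3\}$ and $u$ having a double root $\{-2,-2\}$, which would give a double root of $p$ at $-2$ sandwiched strictly between two simple roots and thus violate the claimed pattern. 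What rules such configurations out is the full force of the identity $v - u = 4(z-1)^k$: differentiating repeatedly gives $u^{(j)} - v^{(j)} = \mp 4\,k(k-1)\cdots(k-j+1)(z-1)^{k-j}$, so the signs of $u^{(j)}-v^{(j)}$ on $(-\infty,1)$ alternate with $j$, and this cascade of inequalities, combined with Rolle's theorem and the classical interlacing between a real-rooted polynomial and its derivative, is exactly what the paper's Lemma~\ref{lem:interlacing_zeros} exploits inductively. You state the single inequality $v-u = \pm 4(z-1)^k$ implicitly but never use the derivative hierarchy, and ``sign-counting'' plus ``pins down the alternating structure uniquely'' is not an argument. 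As written, the ordering claim does not follow; you would need to prove (or cite) something equivalent to Lemma~\ref{lem:interlacing_zeros} to close this.
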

By the above proposition the degree of $ p $ is $ 2k-1 $ and the zeros of $ p $ are of at most order two. Let $ p_0 $ and $ q $ be polynomials with simple zeros such that $ p=q^2p_0 $. We take the leading coefficient of $ q $ as one. Let $ 0=x_0>x_1>\dots> x_{2k'-3}>x_{2(k'-1)} $ be the zeros of $ p_0 $. Here $ k'\leq k $ and the degree of $ p_0 $ is $ 2k'-1 $ and the degree of $ q $ is $ k-k' $.

Consider the compact Riemann surface $ \mathcal R $ defined as the zero set in $ \CC^2 $ of
\begin{equation}
 Q(z,w) = w^2 - p_0(z),
\end{equation}
together with the point $ y_{2k'-1}=(\infty,\infty) $. Let 
\begin{equation}
 C^+_0 = \{(z,w) \in \mathcal R: z\in [0,\infty)\}\cup\{y_{2k'-1}\},
\end{equation}
\begin{equation}
 C^+_\ell = \{(z,w) \in \mathcal R:z \in \overline{I_\ell}\}, 
\end{equation}
for $ \ell = 1,\dots, k'-1 $, where $ I_\ell = (x_{2\ell},x_{2\ell-1}) $, and 
\begin{equation}
 C^+ = \bigcup_{\ell=0}^{k'-1}C^+_\ell.
\end{equation}
For $ \ell = 0,\dots,2(k'-1) $ denote $ y_\ell = (x_\ell,0) \in \mathcal R $. By construction the genus of $ \mathcal R $ is $ k'-1 $.

Below the spectral curve given in \cite{KOS06} is computed. This will not be used in the rest of the paper, but is computed to connect our results to \cite{KOS06} and \cite{KO07}. See in particular Corollary \ref{cor:burger}.
\begin{figure}[t]
\begin{center}
 \begin{tikzpicture}[scale=0.75]
 \foreach \x in {0,2,4,6}
 {\foreach \y in {1,3}
  {\draw (\x,\y)--(\x+1,\y+1);
 }
 }
   \foreach \x in {1,3,5}
 {\foreach \y in {0,2,4}
  {\draw (\x,\y)--(\x+1,\y+1);
 }
 }
 \foreach \y in {3,5}
  {\draw (0,\y)--(1,\y-1);
 }
 \foreach \y in {1,3}
 {\draw (6,\y)--(7,\y-1);
 }
 \foreach \x in {2,4}
 {\foreach \y in {1,3,5}
  {\draw (\x,\y)--(\x+1,\y-1);
 }
 }
   \foreach \x in {1,3,5}
 {\foreach \y in {2,4}
  {\draw (\x,\y)--(\x+1,\y-1);
 }
 }
  \foreach \x in {0,2,4}
 {\foreach \y in {1,3}
  {\draw (\x+1,\y+1) node[circle,draw=black,fill=black,inner sep=2pt]{};
 }
 }
 \foreach \x in {2,4,6}
 {\foreach \y in {1,3}
  {\draw (\x,\y) node[circle,draw=black,fill=white,inner sep=2pt]{};
 }
 }
 {\draw (2.5,3.5) node[above] {$\beta_1$};
 \draw (1.5,3.5) node[above] {$-\alpha_1$};
 \draw (2.5,1.5) node[above] {$\beta_1^{-1}$};
 \draw (1.5,1.5) node[above] {$-\alpha_1^{-1}$};
 \draw (4.5,3.5) node[above] {$\beta_2$};
 \draw (3.5,3.5) node[above] {$-\alpha_2$};
 \draw (4.5,1.5) node[above] {$\beta_2^{-1}$};
 \draw (3.5,1.5) node[above] {$-\alpha_2^{-1}$};
 \draw (6.8,3.5) node[above] {$\beta_3\lambda$};
 \draw (5.5,3.5) node[above] {$-\alpha_3$};
 \draw (6.8,1.5) node[above] {$\beta_3^{-1}\lambda$};
 \draw (5.5,1.5) node[above] {$-\alpha_3^{-1}$};
 }
 \foreach \x in {1,...,5}
 {\draw (\x+.5,.5) node[above] {$ z $};
 }
 \draw (6.5,.5) node[above] {$ z\lambda $};
 \draw (6.5,2.5) node[right] {$ \lambda $};
 
 \draw [<-](-.5,4.5)--(7.5,4.5);
 \draw (-.5,4.5) node[below] {$\gamma_x$};
  \draw [<-](.5,-.5)--(.5,5);
 \draw (.5,-.5) node[left] {$\gamma_y$};
 \end{tikzpicture}
\end{center}
\caption{The complex weight used to define the magnetically altered Kastelyn matrix. Here $ k=3 $. \label{fig:magnetic_weights}}
\end{figure}
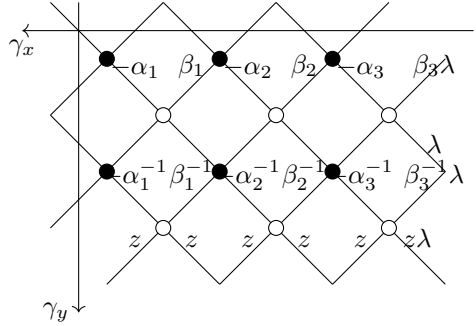

The magnetically altered Kastelyn matrix $ K(z,\lambda) $ is defined such that $  \left(K(z,\lambda)\right)_{i,j} $ is the weight on the edge connecting the black vertex number $ i $ with the white vertex number $ j $, indicated in Figure \ref{fig:magnetic_weights} for $ k=3 $, see \cite{KOS06}. Hence
\begin{equation}
K(z,\lambda)=
 \begin{pmatrix}
  A_1 & \textbf{0} & \textbf{0} & \cdots & \lambda B_k \\
  B_1 & A_2 & \textbf{0} & \cdots & \textbf{0} \\
  \textbf{0} & B_2 & A_3 & \cdots & \textbf{0} \\
   & \cdots &  & \ddots & \\
   \textbf{0} & \textbf{0} & \textbf{0} & \cdots & A_k
 \end{pmatrix}
\end{equation}
where 
\begin{equation}
 A_j =
 \begin{pmatrix}
  -\alpha_j^{-1} & 1\\
  z & -\alpha_j
 \end{pmatrix},
 \quad
 B_j=
 \begin{pmatrix}
  \beta_j^{-1} & 1\\
  z & \beta_j
 \end{pmatrix}
 \text{ and }
 \textbf{0} = 
 \begin{pmatrix}
  0 & 0 \\
  0 & 0
 \end{pmatrix}.
\end{equation}
The characteristic polynomial is given by $ P_C(z,\lambda) = \det K(z,\lambda) $ and the spectral curve is the set satisfying $ P_C(z,\lambda) = 0 $. A simplification, for our purposes, is done using the block structure of the matrix. Namely,
\begin{equation}
 P_C(z,\lambda)=\prod_{j=1}^k\det A_j\det \left(I-(-1)^k\lambda B_kA_k^{-1}\cdots B_1 A_1^{-1}\right),
\end{equation}
and since
\begin{equation}
 \left(B_mA_m^{-1}\right)^T =
 \begin{pmatrix}
  z^\frac{1}{2} & 0 \\
  0 & z^{-\frac{1}{2}}
 \end{pmatrix}
 \phi_{2m-1}(z)\phi_{2m}(z)
 \begin{pmatrix}
  z^{-\frac{1}{2}} & 0 \\
  0 & z^\frac{1}{2}
 \end{pmatrix},
\end{equation}
we obtain
\begin{equation}
 P_C(z,\lambda) = (1-z)^k\det\left(I-(-1)^k\lambda\Phi(z)\right)=(1-z)^k\det\left(\lambda I-(-1)^k\Phi(z)\right).
\end{equation}
The Newton polygon $ N(P_C) $ is the rectangle
\begin{equation}\label{eq:sec_model:newton_polygon}
 N(P_C) = \{(x,y)\in \RR^2:0 \leq x \leq k, 0\leq y\leq 2 \},
\end{equation}
which has $ k-1 $ integer points in the interior. By \cite[Theorem 5.3]{KOS06} we therefore expect to see $ k-1 $ smooth components for a generic choice of weightings. The same theorem also states that the number of smooth regions equals the genus of the spectral curve. Theorem \ref{thm:number_of_smooth_components} below confirms this last statement in our setting, since the genus of the above spectral curve is the same as the genus of $ \mathcal R $.

\subsection{Definition of the frozen, rough and smooth regions}

As the size of the Aztec diamond tends to infinity, we consider global coordinates so the Aztec diamond, or more precisely the top part of the non-intersecting paths corresponding to the Aztec diamond, is contained in the square $ [-1,1]^2 $. Let 
\begin{multline}\label{eq:sec_model:local_coordinates}
 \xi = \frac{kN}{4}(\eta-1)+e_\eta+\zeta, \quad m = \frac{N}{2}(\chi+1)+e_\chi+\kappa \\
 \xi' = \frac{kN}{4}(\eta-1)+e_\eta+\zeta' \text{ and } m'=\frac{N}{2}(\chi+1)+e_\chi+\kappa', 
\end{multline}
where $ (\chi,\eta) \in (-1,1)^2 $ is the global coordinate and $ (\kappa,\zeta),(\kappa',\zeta') \in \ZZ^2 $ are the local coordinates and $ e_\eta,e_\chi \in [0,1) $ are error terms so that the right hand side of each expressions is in $ \ZZ $. Recall that $ 0<m,m'<N $ and $ -\frac{kN}{2}\leq \xi,\xi'\leq -1 $, take $ (\kappa,\zeta) $ ans $ (\kappa',\zeta') $ so these inequalities hold. The local coordinates will not be used until Section \ref{sec:local_picture}.

For $ (\chi,\eta)\in (-1,1)^2 $ consider the function defined for $ (z,w) \in \mathcal R $ given by
\begin{equation}\label{eq:sec_model:F}
 F(z,w;\chi,\eta) = \frac{k}{4}(\eta+1)\log z -\frac{k}{2}\log(z-1)-\frac{\chi}{2}\log \rho(z,w),
\end{equation}
where $ \rho:\mathcal R \to \CC $ is the meromorphic function
\begin{equation}\label{eq:sec_model:eigenvalue}
 \rho(z,w) = \frac{1}{2}\Tr \Phi(z) + \frac{q(z)w}{2(z-1)^k}.
\end{equation}
The function $ F $ is multi-valued, to make it single-valued we take the logarithm as the principle branch. The choice of branch does barely play a role in the analysis. The only situation when it does matter is in the formulation of Theorem \ref{thm:height_function_expectation}. The reason to study this particular function will be clear as we perform the steepest descent analysis of the correlation kernel given in Theorem \ref{thm:finite_kernel}. In fact, $ F $ is the function in the exponent in the integrand which is the reason why we care about the critical points of $ F $ and also the reason why the choice of branch does not play a big role.

\begin{lemma}\label{lem:critical_points}
 Let $ (\chi,\eta) \in (-1,1)^2 $, then $ F(z,w;\chi,\eta) $ has $ 2k' $ critical points in $ \mathcal R $.
\end{lemma}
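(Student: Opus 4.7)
\emph{Proof plan.} The plan is to reduce the statement to counting zeros of a meromorphic differential and invoke the degree formula. Although $F$ is multi-valued, the expression
\begin{equation}
dF = \frac{k(\eta+1)}{4}\frac{dz}{z}-\frac{k}{2}\frac{dz}{z-1}-\frac{\chi}{2}\frac{d\rho}{\rho}
\end{equation}
defines a meromorphic differential on the compact Riemann surface $\mathcal{R}$, and its zeros are exactly the critical points of $F$. Since $\mathcal{R}$ has genus $k'-1$, the divisor of any meromorphic differential on $\mathcal{R}$ has degree $2k'-4$, so it suffices to show that $dF$ has exactly four simple poles.

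The key step is to determine the divisor of $\rho$ on $\mathcal{R}$. For this I would use the identity $\det\Phi(z)\equiv 1$, a short direct calculation from \eqref{eq:sec_model:weight}, which gives $\rho_1\rho_2\equiv 1$. This forces $\rho$ to be finite and nonzero above every point at which $\Phi$ is finite and invertible; the only exceptional point is $z=1$. Writing $\rho=(T+qw)/(2(z-1)^k)$ with $T=(z-1)^k\Tr\Phi$, the polynomial identity $(T+qw)(T-qw)=T^2-p=4(z-1)^{2k}$ then shows that at the two preimages $p_1^\pm$ of $z=1$ one of $T\pm qw$ vanishes to order $2k$ while the other is nonzero. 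Hence $\rho$ has a pole of order $k$ at one of $p_1^\pm$ and a zero of order $k$ at the other, so $d\rho/\rho$ contributes exactly two simple poles to $dF$.

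Next I would pull back $dz/z$ and $dz/(z-1)$ to $\mathcal{R}$. Because $z=0$ and $z=\infty$ are ramification points of $\mathcal{R}\to\overline{\CC}$ (since $p_0(0)=0$ and $\deg p_0 = 2k'-1$), $dz/z$ has simple poles at $y_0$ and $y_{2k'-1}$ only, while $dz/(z-1)$ has simple poles at $p_1^\pm$ and at $y_{2k'-1}$. A short local expansion using that $\Phi(\infty)$ is unipotent gives $\rho(y_{2k'-1})=1$, so $d\rho/\rho$ is regular at $y_{2k'-1}$ and introduces no further pole. Collecting contributions, the residues of $dF$ at the four candidate points $y_0,\,y_{2k'-1},\,p_1^+,\,p_1^-$ work out to
\begin{equation}
\tfrac{k(\eta+1)}{2},\qquad \tfrac{k(1-\eta)}{2},\qquad \tfrac{k(\chi-1)}{2},\qquad -\tfrac{k(\chi+1)}{2},
\end{equation}
each nonzero for $(\chi,\eta)\in(-1,1)^2$. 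So $dF$ has exactly four simple poles and no others, and the degree formula yields $(2k'-4)+4=2k'$ zeros counted with multiplicity.

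I expect the main obstacle to be the divisor analysis of $\rho$: ruling out spurious zeros or poles of $\rho$ outside the fibre over $z=1$ and pinning down the correct orders on each sheet above $z=1$. Both rely essentially on the identity $\det\Phi\equiv 1$ together with $(T+qw)(T-qw)=4(z-1)^{2k}$, after which the residue bookkeeping at the branch points $y_0$ and $y_{2k'-1}$ and the unramified points $p_1^\pm$ is straightforward.
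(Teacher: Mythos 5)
Your proof is correct and takes a genuinely different route from the paper. The paper (Lemma~\ref{lem:projected_critical_points} in Section~\ref{sec:proof_phases}) explicitly constructs a polynomial $P(z;\chi,\eta)$ of degree $2k'$ by forming the product $F_1'F_2'$ in local coordinates, clears denominators, and then verifies that no spurious zeros were introduced at $z=1$, $z=x_0$, or $z=x_\ell$; the count then follows from the degree of $P$. Your argument instead treats $dF$ as a meromorphic $1$-form on the compact surface $\mathcal{R}$ of genus $k'-1$ and invokes the degree formula $\deg(\omega) = 2g-2$, so the heart of the argument becomes the divisor analysis: that $\rho$ has exactly one pole and one zero, each of order $k$, at the two unramified points over $z=1$, which you deduce from $\rho_1\rho_2=\det\Phi\equiv 1$ and the identity $(T+qw)(T-qw)=4(z-1)^{2k}$ together with $T(1)>0$; and that $dz/z$, $dz/(z-1)$ contribute simple poles only at $y_0$, $y_{2k'-1}$, $1^\pm$ because of the ramification pattern. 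Your residue bookkeeping $\frac{k(\eta+1)}{2}$, $\frac{k(1-\eta)}{2}$, $\frac{k(\chi-1)}{2}$, $-\frac{k(\chi+1)}{2}$ is correct (they sum to zero as required) and all four residues are nonzero precisely on $(-1,1)^2$, so $dF$ has exactly four simple poles and hence $2k'-4+4=2k'$ zeros with multiplicity. This is a clean, conceptually transparent alternative; the one thing the paper's more computational route buys is the explicit polynomial $P$, which it reuses directly in the proofs of Propositions~\ref{prop:boundary} and~\ref{prop:boundary_is_boundary} to parametrize the arctic curves, so the extra work is not wasted.
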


\begin{lemma}\label{lem:zeros_on_loops}
 For all $ (\chi,\eta) \in (-1,1)^2 $ the function $ F $ has at least two distinct critical points of odd order on every loop $ C^+_\ell $, $ \ell=1,\dots,k'-1 $.
\end{lemma}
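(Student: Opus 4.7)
The plan is to reduce the equation $dF = 0$ on $C_\ell^+$ to a single real equation on the interval $I_\ell$ and then apply the intermediate value theorem.

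First I would verify that $p_0(z) > 0$ on every $I_\ell$ with $\ell \geq 1$. Expanding $\Tr \Phi(z) = 2 + a z^{-1} + O(z^{-2})$ near infinity, a direct computation from the explicit form of $\Phi$ gives $a = 2k + b$ with $b > 0$ (a sum of manifestly positive expressions in $\alpha_m, \beta_m$), so the leading coefficient of $p$, and hence of $p_0$, is positive; combined with the sign alternation at the simple zeros of $p_0$, this yields $p_0 > 0$ on every $I_\ell$. Consequently $w = \pm\sqrt{p_0(z)}$ is real on $C_\ell^+$ and so is $\rho$, with $\rho_+ \rho_- = 1$. Using $z$ as local coordinate on the interior of either sheet, $dF = f(z)\,dz$ with
\[
f(z) = \frac{k(\eta+1)}{4z} - \frac{k}{2(z-1)} - \frac{\chi}{2}\frac{\rho'(z)}{\rho(z)}
\]
a real-valued function of $z \in I_\ell$, and $dF = 0$ reduces to the single real equation $f(z) = 0$ on each sheet.

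Next I would carry out the endpoint asymptotics. A Taylor expansion near $z = x_{2\ell-1}$ gives $\rho_+(z) = \sigma + \mu_R \sqrt{-\tau} + O(\tau)$, where $\tau = z - x_{2\ell-1} < 0$, $\sigma = \frac{1}{2}\Tr\Phi(x_{2\ell-1}) \in \{\pm 1\}$ and $\mu_R = q(x_{2\ell-1})\sqrt{|p_0'(x_{2\ell-1})|}/[2(x_{2\ell-1}-1)^k]$, and analogously at $x_{2\ell}$ with the \emph{same} $\sigma$ (since $|\Tr\Phi| > 2$ on $I_\ell$ forces $\Tr\Phi$ to keep constant sign throughout $\overline{I_\ell}$). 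Logarithmic differentiation then gives $\rho'_+/\rho_+ \sim \mp \mu_{R,L}/(2\sigma\sqrt{|z-x|})$ at the two endpoints. The crucial sign question is whether $\mu_L \mu_R > 0$, equivalently whether $q(x_{2\ell-1}) q(x_{2\ell}) > 0$. By the ordering pattern in Proposition \ref{prop:zeros_of_p}, zeros of $q$ correspond to merged pairs $s_{2i-1} = s_{2i}$, while the endpoints of $I_\ell$ come from an unequal pair $s_{2i-1} > s_{2i}$ with no intervening zero of $p$; hence $q$ has constant sign on $\overline{I_\ell}$ and $\mu_L \mu_R > 0$. Therefore $\rho'_+/\rho_+$ diverges to \emph{opposite} signs at the two endpoints of $I_\ell$ (and so does $\rho'_-/\rho_- = -\rho'_+/\rho_+$).

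For $\chi \neq 0$ the bounded function $u(z) := \frac{k(\eta+1)}{4z} - \frac{k}{2(z-1)}$ is dominated near each endpoint by $-\frac{\chi}{2}\rho'_\pm/\rho_\pm$, so $f$ changes sign on each sheet. The intermediate value theorem then yields a sign-changing zero of $f$ on each sheet; since $dz \neq 0$ in the interior, these are zeros of $dF$ of odd order (sign change forces odd multiplicity), and they lie on different sheets of $C_\ell^+$, hence are distinct. In the degenerate case $\chi = 0$, $dF = u(z)\,dz$ with $u$ finite and nonzero on $\overline{I_\ell}$ for generic $\eta$, so the zeros of $dF$ on $C_\ell^+$ come from the zeros of $dz$: in the local branch-point parameter $s$, $dz = 2cs\,ds + O(s^3)\,ds$ vanishes simply, so the branch points $y_{2\ell-1}, y_{2\ell}$ are themselves two simple critical points of $F$. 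In either case $C_\ell^+$ carries at least two distinct odd-order critical points of $F$.

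The main obstacle is the combinatorial step showing that $q$ has no zero on $\overline{I_\ell}$; this is tightly coupled with the Taylor analysis of $\rho$ at the branch points, where the square-root singularity must be tracked carefully to pin down the sign of $\mu_L \mu_R$.
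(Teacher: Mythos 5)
Your proof is correct in outline but takes a genuinely different, more computational route than the paper's. The paper's argument is global and needs almost nothing: since $\rho$ is real and nonzero on the loop $C^+_\ell$, the $1$-form $dF$ restricts to a \emph{real} $1$-form along $C^+_\ell$; since $\re F$ is a single-valued harmonic function on $\mathcal{R}$ away from its poles, one has $\oint_{C^+_\ell} dF = \oint_{C^+_\ell} d(\re F) = 0$, and a real continuous integrand on a closed loop with vanishing integral must change sign at least twice, yielding two odd-order zeros of $dF$. No endpoint asymptotics, no case split, no information about $q$ is required. Your route instead works locally: you Taylor-expand $\rho$ at the branch points and show $\rho'/\rho$ blows up with opposite signs at the two endpoints of $I_\ell$, then apply the intermediate value theorem on each sheet. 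The opposite-sign step requires $q$ to keep constant sign on $\overline{I_\ell}$; your justification is correct — the simple zeros $x_{2\ell-1},x_{2\ell}$ bounding $I_\ell$ always arise from a single consecutive non-merged pair $\{s_{2i-1},s_{2i}\}$ in the ordering of Proposition~\ref{prop:zeros_of_p}, so no double zero of $p$ (zero of $q$) can lie between them — but it is a nontrivial combinatorial sub-argument the paper never needs to invoke here. In fact your endpoint analysis essentially re-derives what the paper proves later in Lemma~\ref{lem:surjective}, via Lemma~\ref{lem:logarithmic_derivative} and a degree count instead of Taylor series.

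The one genuine gap is at $\chi=0$, which the lemma must cover for every $\eta$, while you only handle ``generic $\eta$''. When $z_0=(\eta+1)/(\eta-1)\in\overline{I_\ell}$ your argument is incomplete, though easily patched: if $z_0$ lies in the interior of $I_\ell$, then $(z_0,\pm w_0)$ are two distinct simple critical points of $F$ (in addition to the two branch points, which remain simple critical points); if $z_0$ equals an endpoint $x_{2\ell-1}$ or $x_{2\ell}$, then $dF$ in the local parameter there vanishes to order three (still odd), and the other branch point still carries a simple critical point. The paper's integral argument dispatches all of these uniformly, which is its chief structural advantage over the local route.
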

The proofs of the above lemmas are given in Section \ref{sec:proof_phases}.

By Lemmas \ref{lem:critical_points} and \ref{lem:zeros_on_loops}, the number of critical points of $ F $ are $ 2k' $ and $ 2k'-2 $ are located in $ C^+ $. What distinguishes the different regions that may appear in the limit is the location of the two remaining critical point.
\begin{definition}\label{def:phases}
 Let $ (\chi,\eta) \in (-1,1)^2 $. We say that $ (\chi,\eta) $ is in 
 \begin{enumerate}[(i)]
  \item $\mathcal G_F$, the frozen region, if $ F $ has two distinct critical points in $ C_0^+ $,
  \item $\mathcal G_R$, the rough region, if $ F $ has non-real critical points, that is if there are critical points in $ \mathcal R \backslash C^+ $,
  \item $\mathcal G_S$, the smooth region, if $ F $ has four distinct critical points in $ C_\ell $ for some $ \ell = 1,\dots,k'-1 $,
  \item $\mathcal E_{FR} $, the boundary between the frozen and the rough regions, if $ F $ has a critical point of order two in $ C_0^+ $,
  \item $\mathcal E_{RS}^{(\ell)} $, for $ \ell=1,\dots,k'-1 $, the boundary between the rough and smooth regions, if $ F $ has a critical point of (at least) order two in $ C_\ell^+ $.
 \end{enumerate}
\end{definition}
By Lemmas \ref{lem:critical_points} and \ref{lem:zeros_on_loops} the above sets are well defined in the sense that they are disjoint. Moreover, it turns out that all $ (\chi,\eta)\in (-1,1)^2 $ are located in one of the sets $ (i)-(v) $, see Proposition \ref{prop:boundary_is_boundary}. That the definition is well defined in the sense that the correlations decay as they should, exponentially in the smooth region and polynomially in the rough region \cite{KOS06}, follows by the formulas in Theorems \ref{thm:local_smooth} and \ref{thm:local_rough}.

\subsection{The arctic curves}\label{sec:global_picture}

In this section we give the global behavior as $ N\to \infty $. The proofs are found in Section \ref{sec:proof_arctic_curves}.

The first proposition tells us about the geometry of the sets $ \mathcal E_{FR}^{(\ell)} $ and $ \mathcal E_{RS}^{(\ell)} $ for $ \ell=1,\dots,k'-1 $.

\begin{proposition}\label{prop:boundary}
 For $ \ell=1,\dots,k'-1 $ the sets $ \mathcal E_{RS}^{(\ell)} $ are simple closed curves containing the points 
 \begin{equation}
  \left(0,\frac{x_{2\ell}+1}{x_{2\ell}-1}\right) \text{ and } \left(0,\frac{x_{2\ell-1}+1}{x_{2\ell-1}-1}\right).
 \end{equation}
 We add the four points $ (0,\pm 1) $ and $ \left(\pm 1,\frac{2}{k}\frac{p'(1)}{p(1)}-1\right) $ to $ \mathcal E_{FR} $ and call also this extended set $ \mathcal E_{FR} $. Then $ \mathcal E_{FR} $ is a simple closed curve. Moreover the above curves are disjoint and symmetric with respect to the line $ \chi=0 $. 
\end{proposition}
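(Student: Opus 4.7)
The plan is to parametrize each boundary set using the critical-point conditions, which are linear in $(\chi,\eta)$, and then to analyze the resulting curves as envelopes of affine families. Using $z$ as a local coordinate on $\mathcal R$ away from branch points, the requirements $\partial_z F = 0$ and $\partial_z^2 F = 0$ form a $2\times 2$ linear system in $(\eta+1,\chi)$ with coefficient matrix
\[
A(z,w) = \begin{pmatrix} k/(4z) & -\tfrac{1}{2}\partial_z\log\rho(z,w) \\ -k/(4z^2) & -\tfrac{1}{2}\partial_z^2\log\rho(z,w) \end{pmatrix}
\]
and right-hand side $\bigl(k/(2(z-1)),\, -k/(2(z-1)^2)\bigr)^T$. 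Away from the zeros of $\det A$, this produces a continuous map $\Psi_\ell\colon C_\ell^+ \to \RR^2$ whose image is $\mathcal E_{RS}^{(\ell)}$ for $\ell\geq 1$ and an open subset of $\mathcal E_{FR}$ for $\ell=0$ (missing finitely many limit points). Since each $C_\ell^+$ is topologically a circle---two sheets over $\overline{I_\ell}$ glued at the branch points, or over $[0,\infty)$ glued at $0$ and $y_{2k'-1}$ for $\ell=0$---the image is automatically a closed curve.

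To identify the listed special points, I use a local coordinate $t = \sqrt{z - x_\ell}$ at a branch point $z = x_\ell$, so that $w$ has nonvanishing linear term in $t$. Expanding $F$ through second order in $t$ and using $dz/dt|_{t=0} = 0$, the equation $\partial_t F|_{t=0} = 0$ reduces to $\chi \cdot \partial_t\log\rho|_{t=0} = 0$, which forces $\chi = 0$ since $\partial_t\log\rho|_{t=0} \neq 0$ (the $w$-term of $\rho$ contributes linearly in $t$, and $\rho(x_\ell,0) = \tfrac{1}{2}\Tr\Phi(x_\ell) = \pm 1 \neq 0$ by $p(x_\ell)=0$). The second-order condition, with $\chi=0$, becomes $k(\eta+1)/x_\ell - k/(x_\ell-1) = 0$, giving $\eta = (x_\ell+1)/(x_\ell-1)$. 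Applied at $x_{2\ell}$ and $x_{2\ell-1}$ this yields the two points listed on $\mathcal E_{RS}^{(\ell)}$; applied at $x_0=0$ and $y_{2k'-1}$ (using $t = 1/\sqrt z$ at infinity) it yields the two points $(0,\pm 1)$. The remaining two points $(\pm 1,(2/k)p'(1)/p(1) - 1)$ arise as limits of $\Psi_0$ as the double critical point tends to $z = 1$: the singularity of $\log(z-1)$ forces $|\chi| \to 1$, and an expansion using that $\Tr\Phi(z)$ has a pole of order $k$ at $z=1$ (with leading coefficient pinned down by the factor $q^2 p_0$ of $p$) produces the stated $\eta$.

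The main obstacle is showing simplicity of these closed curves. For $\ell\geq 1$ I would realize $\mathcal E_{RS}^{(\ell)}$ as the envelope in the $(\chi,\eta)$-plane of the affine family $L_{z,w} = \{\partial_z F(z,w;\chi,\eta) = 0\}$ indexed by $(z,w)\in C_\ell^+$; classical envelope theory on each sheet then gives a smooth Jordan arc, with singular behaviour only where $\det A$ vanishes. The plan is to prove that on each sheet $\det A$ has no zero in the interior of $I_\ell$, so that the two sheets join smoothly at the branch points into a closed Jordan curve meeting the $\eta$-axis tangentially (cusp-like) at each of the two special points. I would establish this nonvanishing from a sign analysis of $h(z) = z\,\partial_z\log\rho(z,w)$ on each real component of the locus $\{p_0 \geq 0\}$, using the interlacing of the zeros of $p$ guaranteed by Proposition \ref{prop:zeros_of_p}. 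For $\mathcal E_{FR}$ the same argument gives a Jordan arc from the open part of $C_0^+$, and the four added limit points close it continuously into a Jordan curve via the asymptotic matching above.

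Finally, for symmetry and disjointness: a direct computation from \eqref{eq:sec_model:weight} shows $\det\Phi(z) = 1$, hence $\rho(z,w)\rho(z,-w) = 1$ and $\log\rho(z,-w) = -\log\rho(z,w)$. This yields $F(z,w;\chi,\eta) = F(z,-w;-\chi,\eta)$, so the hyperelliptic involution $(z,w)\mapsto(z,-w)$ on $\mathcal R$ intertwines with the reflection $(\chi,\eta)\mapsto(-\chi,\eta)$, giving the symmetry of each boundary curve. Disjointness of the various curves is immediate from Definition \ref{def:phases}: a given $(\chi,\eta)\in(-1,1)^2$ places its double critical point on at most one of the pairwise disjoint loops $C_0^+,\ldots,C_{k'-1}^+$ in $\mathcal R$, hence lies on at most one of $\mathcal E_{FR}, \mathcal E_{RS}^{(1)},\ldots,\mathcal E_{RS}^{(k'-1)}$.
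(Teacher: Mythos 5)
Your overall strategy mirrors the paper's: parametrize the boundary set by solving the linear system $\partial_zF=\partial_z^2F=0$ in $(\chi,\eta)$, identify the special points by local coordinates at the branch points and at $z=1$, and use $\det A\neq 0$ plus the hyperelliptic involution for smoothness and the $\chi\mapsto-\chi$ symmetry. That much is correct and essentially the same plan the paper executes.

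There are, however, two genuine gaps. First, you assert that the image of $\Psi_\ell$ \emph{is} $\mathcal E_{RS}^{(\ell)}$ (and, after adding limit points, $\mathcal E_{FR}$), but by Definition \ref{def:phases} these sets live inside $(-1,1)^2$, while the parametrized curve a priori is only known to live in the plane of all $(\chi,\eta)$ with a double critical point on $C_\ell^+$. Establishing that the parametrization never leaves $(-1,1)^2$ is a substantial part of the paper's argument: one needs $|\chi(z)|<1$ for $z\in I_0$, which the paper derives from Lemma \ref{lem::technical}\eqref{eq:lem_technical:2}, and one needs to rule out $\eta=\pm 1$, which uses Lemma \ref{lem::technical}\eqref{eq:lem_technical:3} and \eqref{eq:lem_technical:4}; the containment of $\mathcal E_{RS}^{(\ell)}$ then follows because these curves are disjoint from $\mathcal E_{FR}$ and must lie in its interior. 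Without this, the closed curve you produce could be clipped by the boundary of $(-1,1)^2$, and the statement would fail. Second, your simplicity argument via envelope theory only delivers local smoothness (an immersion where $\det A\neq 0$), not global injectivity. The paper obtains injectivity much more cheaply from the critical-point count: by Lemma \ref{lem:critical_points} there are exactly $2k'$ critical points, and Lemma \ref{lem:zeros_on_loops} forces at least two simple ones on every $C^+_j$, $j\geq 1$; so for a given $(\chi,\eta)$ there is room for at most one double critical point, and hence the parametrization cannot hit the same $(\chi,\eta)$ twice. You should replace the envelope plan with this counting argument, which closes the gap directly. The remaining pieces of your proposal (the local $t=\sqrt{z-x_\ell}$ computation, the $z\to 1$ Taylor expansion pinning down $C_\pm$, the symmetry from $\rho(z,w)\rho(z,-w)=1$, and disjointness from the loops being disjoint) are fine and line up with the paper.
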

In fact, the proof of the above statement gives a parameterization of the curves. Namely, let
\begin{equation}
 \chi(z) = \frac{k}{(z-1)^2\frac{\d}{\d z}\left(\frac{z\rho_1'(z)}{\rho_1(z)}\right)}
\end{equation}
and
\begin{equation}
 \eta(z) = \frac{1}{z-1}\left(\frac{2\frac{z\rho_1'(z)}{\rho_1(z)}}{(z-1)\frac{\d}{\d z}\left(\frac{z\rho_1'(z)}{\rho_1(z)}\right)}+z+1\right),
\end{equation}
where $ \rho_1 $ is an eigenvalue of $ \Phi $, see the beginning of Section \ref{sec:correlation_kernel} or Section \ref{sec:properties_of_eigenvalues} for a precise definition. Then $ I \ni z \mapsto (\chi(z),\eta(z)) $ together with $ I \ni z \mapsto (-\chi(z),\eta(z)) $ parametrizes $ \mathcal E_{RS}^{(\ell)} $ if $ I = \overline{I_\ell} $ and $ \mathcal E_{FR} $ if $ I = [0,\infty] $.

 \begin{figure}[t]
 \begin{center}
 \begin{tikzpicture}[scale=.68]
    \draw (-11,-1.1) node {\includegraphics[scale=.34]{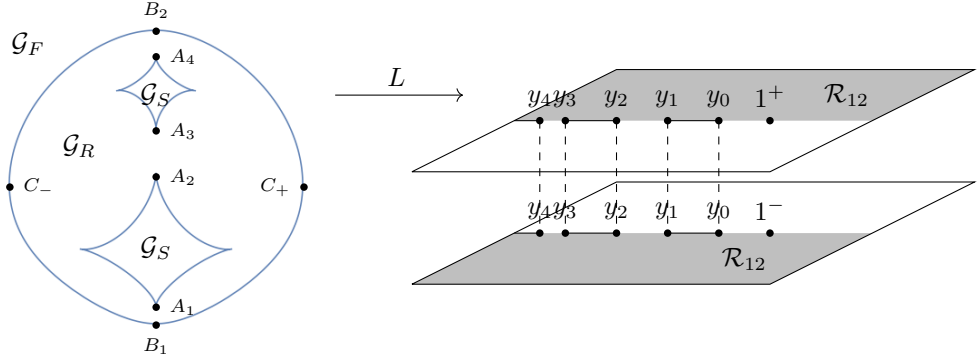}};
    \draw (-12.5,-.5) node {$ \mathcal G_R $};
    \draw (-11,.5) node {$ \mathcal G_S $};
    \draw (-11,-2.5) node {$ \mathcal G_S $};
    \draw (-13.5,1.5) node {$ \mathcal G_F $};
    \draw (-11,1.25) node[circle,fill,inner sep=1pt,label=right:\scriptsize $A_4$]{};
    \draw (-11,-.2) node[circle,fill,inner sep=1pt,label=right:\scriptsize $A_3$]{};
    \draw (-11,-1.1) node[circle,fill,inner sep=1pt,label=right:\scriptsize $A_2$]{};
    \draw (-11,-3.65) node[circle,fill,inner sep=1pt,label=right:\scriptsize $A_1$]{};
    \draw (-11,-4) node[circle,fill,inner sep=1pt,label=below:\scriptsize $B_1$]{};
    \draw (-11,1.75) node[circle,fill,inner sep=1pt,label=above:\scriptsize $B_2$]{};
    \draw (-8.11,-1.3) node[circle,fill,inner sep=1pt,label=left:\scriptsize $C_+$]{};
    \draw (-13.86,-1.3) node[circle,fill,inner sep=1pt,label=right:\scriptsize $C_-$]{};
   \fill[color=lightgray]
   (-4,0)--(3,0)--(5,1)--(-2,1);
   \draw (-6,-1)--(1,-1)--(5,1)--(-2,1)--(-6,-1);
   \draw (1,0) node[circle,fill,inner sep=1pt,label=above:$1^+$]{};
   \draw (0,0) node[circle,fill,inner sep=1pt,label=above:$y_0$]{};
   \draw (-1,0) node[circle,fill,inner sep=1pt,label=above:$y_1$]{};
   \draw (-2,0) node[circle,fill,inner sep=1pt,label=above:$y_2$]{};
   \draw (-3,0) node[circle,fill,inner sep=1pt,label=above:$y_3$]{};
   \draw (-3.5,0) node[circle,fill,inner sep=1pt,label=above:$y_4$]{};
   \draw (2.5,.5) node {$ \mathcal R_{12} $};
   \draw (-1,0)--(0,0);
   \draw (-3,0)--(-2,0);
   \draw (-4,0)--(-3.5,0);
   \fill[color=lightgray]
   (-6,-3.2)--(1,-3.2)--(3,-2.2)--(-4,-2.2);
   \draw (-6,-3.2)--(1,-3.2)--(5,-1.2)--(-2,-1.2)--(-6,-3.2);
   \draw (1,-2.2) node[circle,fill,inner sep=1pt,label=above:$1^-$]{};
   \draw (0,-2.2) node[circle,fill,inner sep=1pt,label=above:$y_0$]{};
   \draw (-1,-2.2) node[circle,fill,inner sep=1pt,label=above:$y_1$]{};
   \draw (-2,-2.2) node[circle,fill,inner sep=1pt,label=above:$y_2$]{};
   \draw (-3,-2.2) node[circle,fill,inner sep=1pt,label=above:$y_3$]{};
   \draw (-3.5,-2.2) node[circle,fill,inner sep=1pt,label=above:$y_4$]{};
   \draw (.5,-2.7) node {$ \mathcal R_{12} $};
   \draw (-1,-2.2)--(0,-2.2);
   \draw (-3,-2.2)--(-2,-2.2);
   \draw (-4,-2.2)--(-3.5,-2.2);
   \draw[dashed] (0,0)--(0,-2.2);
   \draw[dashed] (-1,0)--(-1,-2.2);
   \draw[dashed] (-2,0)--(-2,-2.2);
   \draw[dashed] (-3,0)--(-3,-2.2);
   \draw[dashed] (-3.5,0)--(-3.5,-2.2);
   \draw[->] (-7.5,.5)--(-5,.5);
   \draw (-6.3,.5) node[above]{$L$};
  \end{tikzpicture}
 \end{center}
  \caption{The diffeomorphism $ L $. See \eqref{eq:sec_model:special_points} for the relation between the points indicated in the figure and recall that $ y_\ell = (x_\ell,0) \in \mathcal R $. \label{fig:L}}
\end{figure}

To describe the macroscopic picture we introduce the following map. Let $ \mathcal R_{12} \subset \mathcal R $ be the interior of the closure of the set $ \{(z,p_0(z)^\frac{1}{2}):\im z>0\} \cup \{(z,-p_0(z)^\frac{1}{2}):\im z<0\} $. Here $ p_0(z)^\frac{1}{2} $ is the analytic continuation of the map $ t\in \RR_+ \mapsto p_0(t)^\frac{1}{2} $ to the slitted plane $ \CC \backslash \left((-\infty,x_{2(k'-1)}]\cup_{\ell=0}^{k'-2} [x_{2\ell+1},x_{2\ell}]\right) $, and for later purposes we define $ p_0^\frac{1}{2} $ on the cuts as the limit from the upper half plane. In words $ \mathcal R_{12} $ is the upper half plane of the first sheet and the lower half plane of the second sheet of $ \mathcal R $, such that it is connected and open. 
\begin{proposition}\label{prop:L_map}
For each $ (\chi,\eta) $ there is a unique critical point of $ F $ in $ R_{12} $, say $ (z,w) $. Define $ L:\mathcal G_R \to \mathcal R_{12} $ as
\begin{equation}\label{eq:sec_model:L}
 L(\chi,\eta) = (z,w).
\end{equation}
 The map $ L $ is a diffeomorphism.
\end{proposition}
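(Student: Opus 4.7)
The plan is to establish three items in sequence: (a)~well-definedness of $L$ via a counting argument using Lemmas~\ref{lem:critical_points}--\ref{lem:zeros_on_loops}; (b)~smoothness of $L$ together with an explicit smooth inverse coming from the critical-point equation, which is linear in $(\chi,\eta)$; and (c)~identification of the range of this inverse with $\mathcal{G}_R$ using the boundary geometry from Proposition~\ref{prop:boundary}.

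For (a), Lemma~\ref{lem:critical_points} gives $2k'$ critical points of $F$ on $\mathcal{R}$, and Lemma~\ref{lem:zeros_on_loops} accounts (with multiplicity) for at least $2(k'-1)$ of them on $C_1^+\cup\cdots\cup C_{k'-1}^+$. In $\mathcal{G}_R$ Definition~\ref{def:phases}(ii) forces the two remaining critical points to be non-real. Under the Schwarz reflection $(z,w)\mapsto(\bar z,\bar w)$ on $\mathcal{R}$ (which swaps the two sheets off the real locus), $F$ satisfies $F(\bar z,\bar w;\chi,\eta)=\overline{F(z,w;\chi,\eta)}$ for principal branches, so the two non-real critical points form a conjugate pair. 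Since $\mathcal{R}_{12}$ and its complement in $\mathcal{R}\setminus\{\mathrm{real\ locus}\}$ are interchanged by this reflection, exactly one point of the pair lies in $\mathcal{R}_{12}$; this defines $L(\chi,\eta)$ unambiguously and gives injectivity.

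For (b), writing $\partial_z F(z_0,w_0;\chi,\eta)=0$ at a chosen $(z_0,w_0)\in\mathcal{R}_{12}$ yields
\begin{equation}
    \frac{k(\eta+1)}{4z_0}-\frac{k}{2(z_0-1)}=\frac{\chi}{2}\,\frac{\rho_z(z_0,w_0)}{\rho(z_0,w_0)},
\end{equation}
a $\mathbb{C}$-linear equation in the real unknowns $(\chi,\eta)$. Splitting into real and imaginary parts gives a $2\times 2$ real system whose determinant is proportional to $\operatorname{Im}\!\left(\overline{z_0^{-1}}\cdot\rho_z(z_0,w_0)/\rho(z_0,w_0)\right)$. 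A direct computation of $\rho_z/\rho$ from \eqref{eq:sec_model:eigenvalue}, using $w_0^2=p_0(z_0)$, shows this determinant is nonzero on $\mathcal{R}_{12}$ away from the real locus; on the open cut components of $\mathcal{R}_{12}$ (where $p_0(z_0)<0$ and $w_0$ is purely imaginary) the non-vanishing again follows since the imaginary part picked up there is $\operatorname{Im}(w_0)\neq 0$. Inverting this system produces an explicit smooth candidate $L^{-1}:\mathcal{R}_{12}\to\mathbb{R}^2$. Conversely, $L$ is smooth by the implicit function theorem: the counting of (a) forces the critical point in $\mathcal{R}_{12}$ to be simple throughout $\mathcal{G}_R$, so $\partial_z^2 F\neq 0$ at $L(\chi,\eta)$.

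For (c), I need to check that the image of this explicit inverse lies in $\mathcal{G}_R$. The key observation is that the parameterizations of $\mathcal{E}_{FR}$ and $\mathcal{E}_{RS}^{(\ell)}$ given after Proposition~\ref{prop:boundary}, viewed as functions of real $z$ on $[0,\infty]$ and on $\overline{I_\ell}$ respectively, coincide with the boundary values of the explicit formula for $L^{-1}$ as $(z_0,w_0)$ approaches the real locus of $\partial \mathcal{R}_{12}$. Since Proposition~\ref{prop:boundary_is_boundary} tells us these curves partition $(-1,1)^2$ into the phases, connectedness of $\mathcal{R}_{12}$ and openness of $L^{-1}$ force the image to be the single connected component bounded by $\mathcal{E}_{FR}\cup\bigcup_\ell\mathcal{E}_{RS}^{(\ell)}$, namely $\mathcal{G}_R$. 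The main obstacle I anticipate is verifying the determinant non-vanishing in a unified way across all of $\mathcal{R}_{12}$; this requires careful control of the meromorphic function $\rho_z/\rho$ on a surface of genus $k'-1\geq 1$, and is where the nontrivial topology of $\mathcal{R}$ enters the argument.
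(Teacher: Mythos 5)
Your overall architecture matches the paper's: reduce the critical-point equation to a $2\times 2$ real linear system $A(z)(\chi,\eta)^T=B(z)$, argue well-definedness of $L$ via the conjugation symmetry on $\mathcal{R}$, get injectivity and smoothness of the inverse from $\det A\neq 0$, and establish the range is $\mathcal{G}_R$ by a continuity argument starting from the real slice. Your step (a) and the linear-system setup in (b) are essentially the paper's proof. However, there are two genuine gaps.

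First, in (b), the non-vanishing $\im\!\left(z\,\rho'(z)/\rho(z)\right)\neq 0$ on $\CC\setminus\bigl(\cup_{\ell}\overline{I_\ell}\bigr)$ is \emph{not} a ``direct computation.'' This is precisely Lemma~\ref{lem::technical}~(i) in the paper, and its proof is a root-counting argument: assume $z\rho_1'/\rho_1 = t\in\RR$, convert via Lemma~\ref{lem:logarithmic_derivative} into a polynomial identity of degree $2k'$, and show all of its roots are already accounted for on the intervals $I_0,\dots,I_{k'-1}$ by the surjectivity statement of Lemma~\ref{lem:surjective}. Nothing in \eqref{eq:sec_model:eigenvalue} or $w_0^2=p_0(z_0)$ gives this for non-real $z$ without that counting. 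Your remark about $\operatorname{Im}(w_0)\neq 0$ handles only the branch-cut slice of $\mathcal R_{12}$ and leaves the genuinely non-real $z$ (and the points of the negative real axis outside the $\overline{I_\ell}$'s) unproven — which is exactly where the hard work lies and where the genus of $\mathcal R$ becomes relevant, as you yourself suspect.

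Second, in (c) you invoke Proposition~\ref{prop:boundary_is_boundary} to conclude that the boundary curves partition $(-1,1)^2$ into the phases. But in the paper Proposition~\ref{prop:boundary_is_boundary} is proved \emph{using} Proposition~\ref{prop:L_map} (its first line is ``By Proposition~\ref{prop:L_map} the map $L$ is a diffeomorphism\dots''), so this is circular. The paper's surjectivity argument avoids this: it only needs Proposition~\ref{prop:boundary} (that $\mathcal E_{FR}$ is a simple closed curve in $[-1,1]^2$, already proved without $L$), observes that on the real slice $z\in\RR_{>0}\setminus\{1\}$ the unique solution is $(0,(z+1)/(z-1))$, which lies strictly inside $\mathcal E_{FR}$, and then argues by smoothness of $z\mapsto A(z)^{-1}B(z)$ together with the defining property of $\mathcal E_{FR}$ (order-two critical points occur only there) that the image can never cross that curve. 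You should replace the appeal to Proposition~\ref{prop:boundary_is_boundary} with an argument of this kind.

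One smaller point: in (a) you write that the conjugation argument ``gives injectivity.'' It gives well-definedness only; injectivity comes from the determinant non-vanishing in (b), since two distinct $(\chi,\eta)$ solving the same linear system $A(z)(\chi,\eta)^T=B(z)$ would force $\det A(z)=0$.
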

Similar maps appear also in other models, most common as a map from the rough region to the upper half plane. However in \cite{BG19} and \cite{CDKL19} the rough region and hence the image of the map is a multiply connected domain respectively two disjoint domains. Using that $ L $ is a diffeomorhism we obtain the following proposition, Figure \ref{fig:L}. 
\begin{proposition}\label{prop:boundary_is_boundary}
 The boundary of $ \mathcal G_R $ is the union of $ \mathcal E_{RS}^{(\ell)} $ for $ \ell=1,\dots,k'-1 $ and $ \mathcal E_{FR} $. The curve $ \mathcal E_{FR} $ separates $ \mathcal G_R $ and $ \mathcal G_F $ and the curves $ \mathcal E_{RS}^{(\ell)} $ for $ \ell=1,\dots,k'-1 $ separates $ \mathcal G_R $ and $ \mathcal G_S $. Moreover, all $ (\chi,\eta) \in (-1,1)^2 $ belongs to one of the sets in Definition \ref{def:phases}. 
\end{proposition}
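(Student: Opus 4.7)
The main tool is the diffeomorphism $L:\mathcal G_R\to\mathcal R_{12}$ from Proposition \ref{prop:L_map}, combined with the explicit parameterization of the boundary curves in Proposition \ref{prop:boundary}. The strategy is as follows: first, show that $\mathcal G_R,\mathcal G_F,\mathcal G_S$ are open in $(-1,1)^2$; next, identify $\partial \mathcal G_R$ with $\mathcal E_{FR}\cup\bigcup_\ell \mathcal E_{RS}^{(\ell)}$; then use a local bifurcation analysis at boundary points to match each boundary curve with the region on the other side; and finally close the argument by a critical-point count showing that every $(\chi,\eta)\in(-1,1)^2$ falls into one of the five sets.

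Openness follows from the implicit function theorem applied to $dF=0$: in each of $\mathcal G_R,\mathcal G_F,\mathcal G_S$ the critical points being tracked are simple (the counts in Lemmas \ref{lem:critical_points} and \ref{lem:zeros_on_loops} leave no room for extra multiplicity), hence they persist under a small perturbation of $(\chi,\eta)$. To identify $\partial\mathcal G_R$, take $(\chi_n,\eta_n)\in\mathcal G_R$ with $(\chi_n,\eta_n)\to(\chi_0,\eta_0)\in(-1,1)^2\setminus\mathcal G_R$. By compactness of $\mathcal R$, the critical point $L(\chi_n,\eta_n)\in\mathcal R_{12}$ accumulates on $\partial \mathcal R_{12}=C^+$. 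The equation $dF=0$ is preserved by the antiholomorphic involution $(z,w)\mapsto(\bar z,\bar w)$ on $\mathcal R$ (the coefficients $\chi,\eta$ are real and $\Phi,q$ have real coefficients), so the conjugate critical point, sitting in the complementary open component of $\mathcal R\setminus C^+$, accumulates at the same limit, and the two simple critical points must merge into a critical point of order at least two on $C^+$. Depending on which $C_\ell^+$ contains the limit, $(\chi_0,\eta_0)$ lies on $\mathcal E_{FR}$ or $\mathcal E_{RS}^{(\ell)}$; the reverse inclusion is immediate from continuity of $L$ extended to $\overline{\mathcal G_R}$.

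For the separation, at a double critical point $z_0\in C_\ell^+$ a local Puiseux analysis of $dF(z)=0$ shows that, under perturbation of $(\chi,\eta)$, the double either splits into a complex conjugate pair (returning to $\mathcal G_R$) or into two distinct real simple critical points on $C_\ell^+$; for $\ell=0$ the latter puts us in $\mathcal G_F$, and for $\ell\geq 1$ these join the two forced simples from Lemma \ref{lem:zeros_on_loops} to give four distinct simple critical points on $C_\ell^+$, i.e.\ $\mathcal G_S$. For the covering statement, Lemma \ref{lem:critical_points} gives $2k'$ critical points counted with multiplicity and Lemma \ref{lem:zeros_on_loops} forces $2(k'-1)$ of them onto $\bigcup_{\ell\geq 1}C_\ell^+$, leaving multiplicity two to distribute. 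These leftover two must form either a non-real conjugate pair, two simple real critical points on the same $C_\ell^+$, or one double real critical point on some $C_\ell^+$; these correspond to cases (i)--(v) of Definition \ref{def:phases}. The main obstacle is ruling out the \emph{hybrid} configurations where the two leftover simple real critical points lie on distinct loops (one on $C_0^+$ and one on $C_m^+$, or on $C_m^+$ and $C_n^+$ with $m\neq n$), which would be open yet fall outside Definition \ref{def:phases}. I would rule these out by combining the disjointness of the simple closed curves $\mathcal E_{FR}$ and $\mathcal E_{RS}^{(\ell)}$ from Proposition \ref{prop:boundary}, which partition $(-1,1)^2$ into finitely many connected components, with the bifurcation analysis above: the phase on each component is determined by its boundary adjacencies and by the local bifurcation across each boundary curve, so every component must be one of $\mathcal G_R,\mathcal G_F$, or a $\mathcal G_S$, leaving no room for a hybrid region.
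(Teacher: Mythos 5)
Your proposal is correct and follows essentially the same route as the paper: use the diffeomorphism $L$ to identify $\partial\mathcal G_R$ with preimages of $C^+$, exploit the conjugation symmetry and the critical-point count of Lemmas \ref{lem:critical_points}--\ref{lem:zeros_on_loops} to reduce to the "hybrid" case, and rule that out by continuity of the roots of the polynomial $P$ in \eqref{eq:polynomial_of_critical_points}. Two small remarks on places you compress: the reverse inclusion $\mathcal E_{FR}\cup\bigcup_\ell\mathcal E_{RS}^{(\ell)}\subset\partial\mathcal G_R$ is not really "immediate from continuity of $L$ extended to $\overline{\mathcal G_R}$" until one knows such an extension exists; the paper instead takes a limit $\mathcal R_{12}\ni(\tilde z,\tilde w)\to(z,w)$, obtains some $(\chi',\eta')\in\partial\mathcal G_R$ with a double critical point at $(z,w)$, and invokes the injectivity of the boundary parametrization established inside the proof of Proposition \ref{prop:boundary} to conclude $(\chi',\eta')=(\chi,\eta)$. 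Your component-by-component argument for ruling out the hybrid phase is a more explicit rendering of what the paper dispatches in one sentence as "would contradict the continuity in equation \eqref{eq:polynomial_of_critical_points}", and it is a reasonable way to fill in that terseness.
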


From the above propositions we conclude that the number of smooth regions equal the number of loops $ C_\ell $ which also is the genus of $ \mathcal R $.
\begin{theorem}\label{thm:number_of_smooth_components}
 The number of connected components of $ \mathcal G_S $ is $ k'-1 $.
\end{theorem}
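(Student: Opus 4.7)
The plan is to exhibit a disjoint decomposition
\[
\mathcal G_S=\bigsqcup_{\ell=1}^{k'-1}\mathcal G_S^{(\ell)},
\quad\text{where}\quad
\mathcal G_S^{(\ell)}:=\{(\chi,\eta)\in(-1,1)^2: F(\,\cdot\,;\chi,\eta)\text{ has four distinct critical points on }C_\ell\},
\]
and then to identify each $\mathcal G_S^{(\ell)}$ with the Jordan interior of $\mathcal E_{RS}^{(\ell)}$, which is an open disk and therefore a single connected component.

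I would first check disjointness of the decomposition by a counting argument combining Lemmas \ref{lem:critical_points} and \ref{lem:zeros_on_loops}: there are $2k'$ critical points of $F$ in total, and at least two distinct critical points of odd order on each of the $k'-1$ loops $C_{\ell'}^+$. Hence if $(\chi,\eta)$ were to lie in $\mathcal G_S^{(\ell_1)}\cap\mathcal G_S^{(\ell_2)}$ with $\ell_1\neq \ell_2$, one could account for at least $4+4+2(k'-3)=2k'+2$ critical points, a contradiction. So the union above is indeed disjoint and it covers $\mathcal G_S$ by Definition \ref{def:phases}.

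Next I would apply the Jordan curve theorem. By Proposition \ref{prop:boundary} each $\mathcal E_{RS}^{(\ell)}$ is a simple closed curve lying in $(-1,1)^2$ (its two marked points at $\chi=0$ have $\eta$-coordinate strictly in $(-1,1)$) and is disjoint from $\mathcal E_{FR}$ and from every $\mathcal E_{RS}^{(\ell')}$ with $\ell'\neq \ell$. Thus $(-1,1)^2\setminus \mathcal E_{RS}^{(\ell)}$ splits into a bounded interior disk $U_\ell$ and an exterior $V_\ell$; since $\mathcal E_{FR}$ meets $\partial[-1,1]^2$ it lies in $V_\ell$, and the explicit parametrization in Section \ref{sec:global_picture} together with the ordering of the $x_i$'s shows that each remaining $\mathcal E_{RS}^{(\ell')}$ crosses the segment $\{0\}\times(-1,1)$ at heights disjoint from $\overline{U_\ell}$, so it also lies in $V_\ell$. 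Continuity of the critical points of $F$ in $(\chi,\eta)$ then gives $\partial\mathcal G_S^{(\ell)}\subseteq \mathcal E_{RS}^{(\ell)}$, so $\mathcal G_S^{(\ell)}$ is a union of connected components of $(-1,1)^2\setminus \mathcal E_{RS}^{(\ell)}$. The decisive topological input is that $\mathcal G_R$ is connected, which follows from Proposition \ref{prop:L_map} because $\mathcal R_{12}$ is connected by construction; since $\mathcal G_R$ contains points in $V_\ell$ (for instance near $\mathcal E_{FR}$) and is disjoint from $\mathcal E_{RS}^{(\ell)}$, we must have $\mathcal G_R\subseteq V_\ell$, forcing $\mathcal G_S^{(\ell)}\subseteq U_\ell$. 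For the reverse inclusion Proposition \ref{prop:boundary_is_boundary} gives $U_\ell\subseteq \mathcal G_F\cup\mathcal G_R\cup\mathcal G_S$ (no separating curve can enter $U_\ell$ without crossing $\mathcal E_{RS}^{(\ell)}$); $\mathcal G_R\cap U_\ell=\emptyset$ by what we just argued, and $\mathcal G_F\cap U_\ell=\emptyset$ because $\mathcal G_F$ lies in the exterior of $\mathcal E_{FR}$ while $U_\ell$ lies in its interior. Hence $U_\ell\subseteq \mathcal G_S$, and the disjointness of the $\mathcal G_S^{(\ell')}$ forces $U_\ell\subseteq \mathcal G_S^{(\ell)}$. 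Therefore $\mathcal G_S^{(\ell)}=U_\ell$ is a nonempty open disk, and the $k'-1$ disks are precisely the connected components of $\mathcal G_S$.

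The hardest step I anticipate is the continuity assertion $\partial\mathcal G_S^{(\ell)}\subseteq \mathcal E_{RS}^{(\ell)}$: the four critical points on $C_\ell$ move holomorphically with $(\chi,\eta)$ away from collisions, and one must argue, via the discriminant of the critical-point equation restricted to the loop $C_\ell$, that the only way to exit $\mathcal G_S^{(\ell)}$ is by a coalescence of two of these critical points, which by Definition \ref{def:phases} places the parameter on $\mathcal E_{RS}^{(\ell)}$. Once this is in hand, the rest is a topological bookkeeping built on Propositions \ref{prop:boundary}, \ref{prop:L_map}, and \ref{prop:boundary_is_boundary}.
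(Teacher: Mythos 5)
Your proposal is correct and follows essentially the same route as the paper, which simply states Theorem \ref{thm:number_of_smooth_components} as an immediate consequence of Propositions \ref{prop:boundary}, \ref{prop:L_map} and \ref{prop:boundary_is_boundary} (disjoint simple closed boundary curves, connectedness of $\mathcal G_R$ via the diffeomorphism $L$, and the classification of every $(\chi,\eta)$ into one of the five sets); you are filling in the Jordan-curve bookkeeping that the paper leaves implicit. The final worry you raise about $\partial\mathcal G_S^{(\ell)}\subseteq\mathcal E_{RS}^{(\ell)}$ is not really a gap: by Lemmas \ref{lem:critical_points} and \ref{lem:zeros_on_loops} the $2k'$ critical points are accounted for, so on the closure of $\mathcal G_S^{(\ell)}$ exactly four (counted with multiplicity) sit on $C_\ell^+$, and leaving the open set $\mathcal G_S^{(\ell)}$ forces two of them to coalesce, which is precisely the defining condition of $\mathcal E_{RS}^{(\ell)}$.
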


To get a feeling about the map $ L $ we list a few special points on the arctic curves and corresponding point on the boundary of $ \mathcal R_{12} $. Denote $ \mathcal R \ni 1^\pm = (1,\pm p_0(1)^\frac{1}{2}) $. The following limits hold
\begin{multline}\label{eq:sec_model:special_points}
 \lim_{\mathcal R_{12} \ni (z,w) \to y_\ell} L^{-1}(z,w) = \left(0,\frac{x_{\ell}+1}{x_{\ell}-1}\right) = A_\ell, \text{ for } \ell=1,\dots,2(k'-1), \\
 \lim_{\mathcal R_{12} \ni (z,w) \to y_0} L^{-1}(z,w) = (0,-1)=B_1, \quad
 \lim_{\mathcal R_{12} \ni (z,w) \to y_{2k'-1}} L^{-1}(z,w) = (0,1)=B_2, \\
 \lim_{\mathcal R_{12} \ni (z,w) \to 1^\pm} L^{-1}(z,w) = \left(\pm 1,\frac{2}{k}\frac{p'(1)}{p(1)}-1\right)=C_\pm,
\end{multline}
which follows from the parametrization above, see also the proof of Proposition \ref{prop:boundary}.

 Proposition \ref{prop:zeros_of_p} now shows what macroscopic pictures that may and may not appear. That a smooth region vanishes means that $ s_{2\ell} \to s_{2\ell-1} $ for some $ \ell=1,\dots,k-1 $ which does not violate Proposition \ref{prop:zeros_of_p} and we know already from the two-periodic Aztec diamond that a smooth region may vanish. That two connected components of the smooth regions touch is the same as $ s_{2\ell}=s_{2\ell+1} $ for some $ \ell=1,\dots, k-2 $, which can not happen. That the smooth regions would touch the frozen region is the same as $ s_0=s_1 $ or $ s_{2(k-1)}=\infty $, which are also impossible. However two smooth components could touch in a degenerated weighting, that is when some parameters are taken to zero. This is mentioned in \cite{DS14}, and is something we hope to come back to in future work.

\subsection{The height function}\label{sec:height_function_dimer}

 A special object of interest is the height function \cite{T90}. The height function transform the tiling problem into a problem about random surfaces, see \cite{CKP00}.
 
 Since we have a non-intersecting paths perspective we consider the height function of a non-intersecting paths model, see for instance \cite{D18}. Let $ D_{h} = \{(u,v)\in \ZZ^2: 0\leq u \leq 2kN, -kN\leq v \leq -1\} $ and define the height function $ h:D_{h}\to \NN $ as
 \begin{equation}
  h(u,v) = \#\{\text{ point at } (u,v_0), v_0\geq v\} = \sum_{v_0 \geq v} \mathds{1}_{(u,v_0)}.
 \end{equation}
 The height function of the non-intersecting path model is closely related to the height function of the dimer model which is a function on the faces of the graph. It is basically a linear transformation to go from one to the other and in particular this transformation is deterministic.
  
 We are in the present paper interested of the height function on the global scale. It is therefore sufficient to consider the points $ (u,v)=(2km,2\xi) $ for some $ m,\xi \in \ZZ $. Recall the global coordinates given in \eqref{eq:sec_model:generalized_weight} with the local coordinates set to zero. Define 
 \begin{equation}
  h^{(N)}(\chi,\eta) = h\left(kN(\chi+1)+2ke_\chi,\frac{kN}{2}(\eta-1)+2e_\eta \right).
 \end{equation}
The limit shape of the $ 2\times k $-periodic Aztec diamond is given in the following theorem. The proof is given in Section \ref{sec:proof_height_function}.

\begin{theorem}\label{thm:height_function_expectation}
 Consider the $ 2\times k $-periodic Aztec diamond of size $ kN\times kN $ with $ N $ even.
 
 If $ (\chi,\eta)\in \mathcal G_S $ and more precisely is located in the interior of $ \mathcal E_{RS}^{(\ell)} $, then
 \begin{equation}
  \EE\left[\frac{2}{kN}h^{(N)}(\chi,\eta)\right] \to -\frac{1}{2}(\eta-1)+\frac{\chi}{k}\left(n_\ell-k\right),
 \end{equation}
 as $ N\to \infty $, where $ n_\ell = \frac{1}{2} \#\{s_i, \text{ zeros of } p: s_i\geq x_{2\ell-1} \} $.
   
 If $ (\chi,\eta)\in \mathcal G_R $ with $ \chi>0 $, then
  \begin{equation}
  \EE\left[\frac{2}{kN}h^{(N)}(\chi,\eta)\right]\to -\frac{1}{\pi\i k}\int_{\gamma_{z_1}}\frac{\d}{\d z}\left(F(z,p_0(z)^\frac{1}{2};\chi,\eta)\right)\d z
 \end{equation}
  as $ N\to \infty $. Here $ p_0^\frac{1}{2} $ is defined just before Proposition \ref{prop:L_map}, $ z_1 $ is the first component of $ L(\chi,\eta) $ and $ \gamma_{z_1} $ is a curve going from $ \bar z_1 $ to $ z_1 $ intersecting the real line only at a point bigger than one.
\end{theorem}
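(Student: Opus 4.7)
The starting point is to write $\EE[h^{(N)}(\chi,\eta)]$ as a sum of diagonal entries of the correlation kernel. By linearity of expectation and \eqref{eq:sec_model:determinantal},
\begin{equation}
  \EE[h^{(N)}(\chi,\eta)] = \sum_{v_0 \geq v} K(u,v_0;u,v_0),
\end{equation}
with $(u,v)$ the integer coordinates corresponding to $(\chi,\eta)$ under \eqref{eq:sec_model:local_coordinates} (zero local offsets). On the diagonal ($m=m'$, $\xi=\xi'$) the indicator term in Theorem \ref{thm:finite_kernel} drops and the $v_0$-dependence of the double integrand is confined to the factor $w^{\xi_0}/z^{\xi_0+1}$. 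Since $|w|<|z|$ on the contours $\gamma_1$ and $\gamma_{0,1}$, the geometric series may be summed inside the integral, producing a factor $(w/z)^\xi/(z-w)$ and hence a double pole at $z=w$. Summing the two diagonal entries $i_0 \in \{0,1\}$ yields a trace over the matrix product, and the resulting scalar double-integral has exponential large-$N$ behaviour controlled by $\exp\bigl(N[F(w,w_w;\chi,\eta)-F(z,w_z;\chi,\eta)]\bigr)$ with $F$ as in \eqref{eq:sec_model:F}, where $E(w)$ selects the $\rho_1$-branch at $w$ and the diagonalisation of $\Phi(z)^{m-N/2}$ places $z$ on the $\rho_2$-branch.

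In the smooth region $\mathcal G_S$ all critical points of $F$ are real, so the two contours can be deformed to curves on which $\re(F(w)-F(z))<0$, forcing the remaining double integral to be exponentially small. During the deformation the pole at $z=w$ is crossed along a real arc lying in the appropriate loop $C_\ell^+$, and the resulting residue provides the limit. Because the factors $(1-z^{-1})^{kN/2}$ and $\Phi(z)^{m-N/2}$ restricted to real $z$ contribute exponentials linear in $\chi$ and $\eta$, the residue evaluates to an affine function of $(\chi,\eta)$. Tracking which zeros of $p$ are traversed during the deformation gives exactly the count $n_\ell$ (the number of zeros above $x_{2\ell-1}$), producing the coefficient $(n_\ell - k)/k$ of $\chi$; the $-k$ shift is the contribution of the order-$k$ pole at $z=1$. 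Finally the boundary condition $h^{(N)}(\chi,\eta) \to 0$ as $\eta \to 1$ pins down the intercept in $\eta$ to yield $-\tfrac{1}{2}(\eta-1)$.

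In the rough region with $\chi>0$, the unique critical point of $F$ in $\mathcal R_{12}$ is $(z_1,w_1) = L(\chi,\eta)$ with $z_1 \notin \RR$. A steepest-descent deformation through $(z_1,w_1)$ and its conjugate on $\mathcal R$ picks up the residue at $z=w$ along a curve from $\bar z_1$ to $z_1$ crossing the real axis at a point larger than one, precisely the curve $\gamma_{z_1}$, with the sign $\chi>0$ fixing the orientation. The saddle contributions from each sheet cancel pairwise, and the double-pole residue, computed via $e^{N(F(w)-F(z))}/(z-w)^2 \sim -N F'(z)/(z-w) + O(1)$, contributes $-\tfrac{N}{2\pi\i}\int_{\gamma_{z_1}} \tfrac{\d}{\d z} F(z,p_0(z)^\frac{1}{2};\chi,\eta)\,\d z$. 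After normalisation by $2/(kN)$ this gives the claimed integral formula.

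The main obstacle is the steepest descent analysis itself. On a Riemann surface of genus $k'-1$ one must identify globally consistent contours through the saddles, verify the sign conditions on $\re F$ along them, and in the smooth region track precisely which real branch points are crossed so that the count $n_\ell$ emerges correctly. A subsidiary technical point is justifying the interchange of the infinite sum with the contour integrals, which requires a uniform tail bound in $v_0$, together with handling of the $O(1)$ boundary contributions from the top of the Aztec diamond that disappear after normalisation by $2/(kN)$.
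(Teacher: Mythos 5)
Your overall strategy matches the paper's: write $\EE[h^{(N)}]$ as a finite sum of diagonal kernel entries, sum the geometric factor inside the double integral to create a second-order pole at $z=w$, then deform $\gamma_1$ through the $z$-contour as in the proofs of Theorems \ref{thm:local_smooth}/\ref{thm:local_rough}, with the $z=w$ residue giving the limit and the remaining double integral vanishing at the normalization scale. However, you leave the two most delicate computations essentially unproved.

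First, the constant $-\tfrac{1}{2}(\eta-1)$. You obtain it by appealing to a boundary condition $h^{(N)}\to 0$ as $\eta\to 1$, but this is not how it emerges, and that boundary condition is itself something one would need to prove. In the paper's computation the residue at $z=w$ produces, after taking the trace, the explicit sum
\begin{equation}
 -\Bigl(\xi+\tfrac{kN}{2}\Bigr)\frac{1}{2\pi\i}\oint\frac{\d z}{z}
 +\frac{kN}{2}\frac{1}{2\pi\i}\oint\frac{\d z}{z-1}
 -\Bigl(\tfrac{N}{2}-m\Bigr)\frac{1}{2\pi\i}\oint\frac{\rho_1'(z)}{\rho_1(z)}\d z
 - (\text{projector term}),
\end{equation}
and the first two evaluate to $-\xi$, which after dividing by $kN/2$ gives $-\tfrac12(\eta-1)$ directly. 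This also involves showing that the last term (the derivative of $E\,\mathrm{diag}(1,0)\,E^{-1}$ traced against $\Phi^{m-N/2}$) vanishes, which uses the idempotency of the spectral projector and the cyclicity of the trace; you do not mention this term at all.

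Second, the coefficient $n_\ell-k$. You gesture at ``tracking which zeros of $p$ are traversed during the deformation,'' but the actual content is the identity $\frac{1}{2\pi\i}\oint_{\gamma_{\chi,\eta}}\frac{\rho_1'(z)}{\rho_1(z)}\,\d z=n_\ell-k$. Proving it requires an argument-principle computation: the pole of $\rho_1$ of order $k$ at $z=1$ contributes $-k$, and the winding number of $\rho_1$ around each cut $(x_{2i+1},x_{2i})$ enclosed by $\gamma_{\chi,\eta}$ contributes $+1$, which in turn uses Lemma \ref{lem:eigenvalues}\eqref{eq:lem_eigenvalues:at_cuts} (the image of the cut under $\rho_1$ is exactly the unit circle) and the sign structure of $p_0^{1/2}$ on consecutive cuts. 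Without this you have only a plausibility argument; the coefficient cannot be read off from the exponential factors, since $\rho_1$ is not log-linear in $\chi,\eta$ on the contour. Finally, your worry about interchanging an infinite sum with the integral is moot: the sum over $\xi_0$ is finite ($-1\geq\xi_0\geq\xi$), so there is no uniform tail estimate to supply, only the observation that the constant piece of the finite geometric sum integrates to zero by analyticity at infinity.
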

\begin{remark}
 In the rough region a similar formula can be derived also if $ \chi\leq 0 $.
\end{remark}

Denote $ \EE\left[h^{(\infty)}(\chi,\eta)\right] = \lim_{N\to\infty}\EE\left[\frac{2}{kN}h^{(N)}(\chi,\eta)\right] $. A few remarks are in place.

It is interesting that if $ (\chi,\eta) \in \mathcal G_S $ then $ \EE\left[h^{(\infty)}(\chi,\eta)\right] $ does not depend on the details of the model, but only on $ k $ and in which smooth region $ (\chi,\eta) $ is located. A statement like this is however something to expect. Namely, according to \cite{KOS06} the slope of the height function in a smooth region, expressed in the appropriate coordinate system, can only take the values of the integer points in the interior of the Newton polygon \eqref{eq:sec_model:newton_polygon}. Keep in mind however that \cite{KOS06} consider models on the torus. 
 
A second remark is that with an appropriate choice of branch in the definition of $ F $ we have 
\begin{equation}
 \EE\left[h^{(\infty)}(\chi,\eta)\right] = -\frac{2}{\pi k}\im F(L(\chi,\eta);\chi,\eta).
\end{equation}

The last remark is the following corollary which should be compared with \cite[Theorem 1]{KO07}.

\begin{corollary}\label{cor:burger}
 Let $ (\chi,\eta) \in \mathcal G_R $ and consider the change of variables $ (\chi,\eta) = (kx,2y) $. Let $ x>0 $ and set $ f(\chi,\eta) = \rho(L(\chi,\eta)) $ and $ g(\chi,\eta) = L_1(\chi,\eta) $, the first component of $ L(\chi,\eta) $. Then
 \begin{equation}
  \nabla \EE\left[h^{(\infty)}(kx,2y)\right] = \frac{1}{\pi}\left(\arg(f(\chi,\eta)),-\arg(g(\chi,\eta)) \right),
 \end{equation}
 and solves the equations
 \begin{equation}
  f\frac{\partial g}{\partial x}+g\frac{\partial f}{\partial y} = 0,
 \end{equation}
 and
 \begin{equation}
  \det \left(\Phi\left(g\right)-f I\right) = 0.
 \end{equation}
\end{corollary}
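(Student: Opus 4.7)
The starting point is the identity
\[
\EE\!\left[h^{(\infty)}(\chi,\eta)\right] = -\tfrac{2}{\pi k}\,\im F(L(\chi,\eta);\chi,\eta)
\]
recorded in the second remark following Theorem~\ref{thm:height_function_expectation}. Since by definition $L(\chi,\eta)$ is a critical point of $(z,w)\mapsto F(z,w;\chi,\eta)$ on $\mathcal{R}$, the chain rule applied to $\Psi(\chi,\eta):=F(L(\chi,\eta);\chi,\eta)$ kills the internal variation and leaves only the explicit dependence on $(\chi,\eta)$. Reading this off from \eqref{eq:sec_model:F} gives $\partial_\chi\Psi=-\tfrac12\log f$ and $\partial_\eta\Psi=\tfrac{k}{4}\log g$. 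Taking imaginary parts and switching to the $(x,y)$ variables via $\partial_x=k\partial_\chi$ and $\partial_y=2\partial_\eta$ then yields the gradient formula $\nabla\EE[h^{(\infty)}(kx,2y)]=\tfrac{1}{\pi}(\arg f,-\arg g)$.

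The spectral-curve identity $\det(\Phi(g)-fI)=0$ is immediate from the construction: by \eqref{eq:sec_model:eigenvalue} the meromorphic function $\rho$ takes as values the eigenvalues of $\Phi$, so at $L(\chi,\eta)=(g,w)$ the number $f=\rho(g,w)$ is an eigenvalue of $\Phi(g)$.

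For the complex Burgers equation, I proceed by implicit differentiation of the critical-point equation. Using $z$ as a local coordinate on $\mathcal{R}_{12}$ near $L(\chi,\eta)$ (valid away from branch points, which is generic inside the rough region by Proposition~\ref{prop:L_map}), the condition $\partial_z F=0$ reads, after division by $k$ and substitution of $\chi=kx$, $\eta=2y$,
\[
\frac{2y+1}{4z}-\frac{1}{2(z-1)}-\frac{x}{2}\,\frac{\rho_z}{\rho}=0,\qquad(\star)
\]
with $\rho_z$ the $z$-derivative of $\rho$ along $\mathcal{R}$. Viewing $z=g(x,y)$ as the implicit solution of $(\star)$ and differentiating in $x$ and $y$ separately gives
\[
\bigl[\partial_z(\text{LHS of }(\star))\bigr]\cdot\partial_x g=\tfrac{\rho_z}{2\rho},\qquad \bigl[\partial_z(\text{LHS of }(\star))\bigr]\cdot\partial_y g=-\tfrac{1}{2z},
\]
whose ratio collapses to the key relation $\partial_x g=-\frac{z\rho_z}{\rho}\,\partial_y g$. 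Since $f=\rho(z,w)$ at $(z,w)=L(\chi,\eta)$, the chain rule along $\mathcal{R}$ gives $\partial_y f=\rho_z\,\partial_y g$, and substituting into $f\partial_x g+g\partial_y f$ yields $-z\rho_z\,\partial_y g+z\rho_z\,\partial_y g=0$, as desired.

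The only genuine subtlety is the multivaluedness of $\log$ in $F$, but since every derivative and imaginary part used above is unambiguous modulo constants that do not affect the gradient formula or the Burgers equation, this is a bookkeeping matter rather than a real obstacle; the analytic step that actually does the work is the collapse of the two implicit derivatives of $(\star)$ to the clean ratio identity between $\partial_x g$ and $\partial_y g$.
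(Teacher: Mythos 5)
Your proposal is correct and follows essentially the same route as the paper: the gradient formula comes from the envelope/critical-point argument applied to $F(L(\chi,\eta);\chi,\eta)$, the spectral-curve identity is immediate from $\rho$ being an eigenvalue of $\Phi$, and the Burgers equation is obtained by implicit differentiation of $F_1'(L_1(\chi,\eta);\chi,\eta)=0$ in $\chi$ and $\eta$, giving precisely the paper's key ratio $\partial_x g = -\frac{z\rho_z}{\rho}\partial_y g$ (equivalently $\partial_\chi L_1 = -\frac{2}{k}\frac{L_1\rho_1'(L_1)}{\rho_1(L_1)}\partial_\eta L_1$). The only cosmetic difference is that you work directly in the $(x,y)$ variables and write out the cancellation explicitly, whereas the paper keeps $(\chi,\eta)$ and calls the final substitution a ``direct verification.''
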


\subsection{Local correlations}\label{sec:local_picture}

We will now present the local correlation kernel in the smooth and rough regions. The proofs are given in Section \ref{sec:proof_local_correlations}. Recall that $ \rho_1 $ and $ \rho_2 $ are the eigenvalues of $ \Phi $ and $ E $ is a matrix consisting of eigenvectors of $ \Phi $, see the section just after \eqref{eq:sec_model:generalized_weight}.

\begin{theorem}\label{thm:local_smooth}
 Consider the $ 2\times k $-periodic Aztec diamond of size $ kN\times kN $ with $ N $ even. Let $ m,m',\xi, \xi' $ be defined in \eqref{eq:sec_model:local_coordinates} and let $ K $ be the correlation kernel in Theorem \ref{thm:finite_kernel}. For $ (\chi,\eta) \in \mathcal G_S $ and more precisely in the interior of $ \mathcal E_{RS}^{(\ell)} $, 
 \begin{multline}
 \lim_{N\to \infty}\left[K\left(2km,2\xi+i;2km',2\xi'+j\right)\right]_{i,j=0}^1 \\
  = \left[K_{smooth}^{(\ell)}(2k\kappa,2\zeta+i;2k\kappa',2\zeta'+j)\right]_{i,j=0}^1,
\end{multline}
where 
 \begin{multline}
  \left[K_{smooth}^{(\ell)}(2k\kappa,2\zeta+i;2k\kappa',2\zeta'+j)\right]_{i,j=0}^1 \\
  = \frac{\mathds{1}_{\kappa\leq \kappa'}}{2\pi\i}\oint_{\gamma_{\ell}} \rho_1(z)^{\kappa-\kappa'}z^{\zeta'-\zeta}E(z)
  \begin{pmatrix}
   1 & 0 \\
   0 & 0
  \end{pmatrix}
  E(z)^{-1}\frac{\d z}{z} \\
  -\frac{\mathds{1}_{\kappa>\kappa'}}{2\pi\i}\oint_{\gamma_\ell} \rho_2(z)^{\kappa-\kappa'}z^{\zeta'-\zeta}E(z)
  \begin{pmatrix}
   0 & 0 \\
   0 & 1
  \end{pmatrix}
  E(z)^{-1}\frac{\d z}{z}, \quad
  \kappa,\kappa',\zeta,\zeta' \in \ZZ.
 \end{multline}
 Here $ \gamma_{\ell} $ is a simple closed curve with $ 0 $ and $ 1 $ in its interior and which intersects the real line exactly two times, ones at $ I_\ell $ and ones at $ (1,\infty) $.
\end{theorem}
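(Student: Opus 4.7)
The strategy is to do a classical saddle-point analysis of the double integral in Theorem \ref{thm:finite_kernel}, with $F$ from \eqref{eq:sec_model:F} as the large-$N$ phase, picking up a residue contribution from the simple pole at $z=w$ when the $w$-contour is moved across the $z$-contour. Throughout, let $P_1(z)=E(z)\diag(1,0)E(z)^{-1}$ and $P_2(z)=E(z)\diag(0,1)E(z)^{-1}$ be the spectral projectors of $\Phi(z)$, so that $\Phi(z)^n=\rho_1(z)^nP_1(z)+\rho_2(z)^nP_2(z)$. Applying this to $\Phi(z)^{m-m'}$ in the single integral and to $\Phi(z)^{m-N/2}$ in the double integral splits the latter as $I_1+I_2$ along $P_1(z),P_2(z)$. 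Substituting \eqref{eq:sec_model:local_coordinates} and grouping $N$-dependent factors, the integrand of $I_j$ is $\exp\bigl(N(F_1(w)-F_j(z))\bigr)$ times $O(1)$ quantities, where $F_j(z)=\tfrac{k(\eta+1)}{4}\log z-\tfrac{k}{2}\log(z-1)-\tfrac{\chi}{2}\log\rho_j(z)$.

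Since $(\chi,\eta)$ is in the interior of $\mathcal{E}^{(\ell)}_{RS}$, Lemmas \ref{lem:critical_points} and \ref{lem:zeros_on_loops} together with Definition \ref{def:phases}(iii) give exactly four distinct critical points of $F$ on $C^+_\ell$. By real symmetry of $\Phi$, they come in conjugate pairs and project to two real saddle points $z^{(1)}_\ast,z^{(2)}_\ast\in I_\ell$ that are simultaneously saddles of $F_1$ and $F_2$ (with $\re F_1(z_\ast)=\re F_2(z_\ast)$, since $\rho_1=\overline{\rho_2}$ on $\overline{I_\ell}$). Choose steepest descent contours $\Gamma$ for $z$ and $\Gamma'$ for $w$ through $z^{(1)}_\ast,z^{(2)}_\ast$ so that $\re F_j(z)\ge\re F_j(z_\ast)$ on $\Gamma$ and $\re F_1(w)\le\re F_1(w_\ast)$ on $\Gamma'$, with $\Gamma'$ outside $\Gamma$. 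The deformation $\gamma_{0,1}\to\Gamma$ is unobstructed since the $z$-dependent factors are rational in $z$ with poles only at $0,1$, while $\gamma_1\to\Gamma'$ crosses $\Gamma$ and picks up a residue at $w=z$ equal to
\begin{equation}
-z^{\zeta'-\zeta-1}\rho_1(z)^{N/2-m'}P_1(z)\Phi(z)^{m-N/2}.
\end{equation}
Expanding $\Phi(z)^{m-N/2}$ spectrally, only the $P_1$-piece survives because $P_1^2=P_1$ and $P_1P_2=0$, so the residue contribution to $K$ is
\begin{equation}
\frac{1}{2\pi\i}\oint_{\gamma_{0,1}}\rho_1(z)^{\kappa-\kappa'}P_1(z)z^{\zeta'-\zeta}\frac{\d z}{z}.
\end{equation}

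On $\Gamma\times\Gamma'$ the remaining double integrals satisfy $\re(F_1(w)-F_j(z))\le -c<0$ off small neighborhoods of the saddles, and a standard two-variable Laplace estimate at the saddles gives $O(N^{-1})$ contributions, so these vanish in the limit. Combining with the single-integral term of Theorem \ref{thm:finite_kernel}, for $\kappa\le\kappa'$ only the residue survives and yields $\tfrac{1}{2\pi\i}\oint_{\gamma_{0,1}}\rho_1^{\kappa-\kappa'}P_1z^{\zeta'-\zeta}\tfrac{\d z}{z}$; for $\kappa>\kappa'$, adding $-\tfrac{1}{2\pi\i}\oint_{\gamma_{0,1}}\Phi^{\kappa-\kappa'}z^{\zeta'-\zeta}\tfrac{\d z}{z}$ and using $\Phi^{\kappa-\kappa'}-\rho_1^{\kappa-\kappa'}P_1=\rho_2^{\kappa-\kappa'}P_2$ yields $-\tfrac{1}{2\pi\i}\oint_{\gamma_{0,1}}\rho_2^{\kappa-\kappa'}P_2z^{\zeta'-\zeta}\tfrac{\d z}{z}$. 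A final contour deformation $\gamma_{0,1}\to\gamma_\ell$ (across the branch cut $\overline{I_\ell}$, justified by the prescribed continuity-from-above conventions for $\rho_1,\rho_2,P_1,P_2$) produces the stated formula.

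The main obstacle is the construction of the steepest descent contours $\Gamma,\Gamma'$ with the correct global topology. Because $\mathcal R$ has genus $k'-1\ge 1$, the global level sets of $\re F$ are non-trivial, and one must use the information of Lemma \ref{lem:zeros_on_loops} (the odd-order critical points on each loop $C^+_j$, $j\ne\ell$) together with the four on $C^+_\ell$ to argue that $\Gamma,\Gamma'$ exist with $\Gamma'$ entirely outside $\Gamma$ and crossing it transversely only in a controlled way around the two saddles $z^{(1)}_\ast,z^{(2)}_\ast$, so that exactly one residue at $w=z$ is produced and the remaining bulk integrals decay.
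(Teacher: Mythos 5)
Your outline captures the skeleton of the paper's argument (deform the double integral, pick up the residue at $w=z$, identify it with $K^{(\ell)}_{smooth}$, and show the remaining double integral is negligible), but it misses a step that is essential whenever $\chi\le 0$. The $w$-integrand of Theorem \ref{thm:finite_kernel} carries $\rho_1(w)^{N/2-m'}$, and in global coordinates $N/2-m'=-N\chi/2-e_\chi-\kappa'$. For $\chi\le 0$ this exponent is nonnegative, while $|\rho_1(w)|\ge 1$ everywhere with strict inequality off the branch cuts (Lemma \ref{lem:eigenvalues}), so $\rho_1(w)^{N/2-m'}$ \emph{grows} exponentially on any $w$-contour that is a positive distance from the cuts. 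Your phase $F_1(w)$ then satisfies $\re F_1(w)\ge\re F_2(w)$ with equality only on the cuts, and a steepest-descent contour for $F_1$ through the real saddle would be forced to hug the cut; no contour that also encircles $z=1$ can stay in $\{\re F_1\le\re F_1(z_*)\}$. This is precisely why the paper introduces $j_\pm$ (so $F_{j_-}$ is the one with $-\tfrac{\chi}{2}\log|\rho_{j_-}|\le 0$) and, after taking the residue, \emph{replaces} $\rho_1(w)$ by $\rho_2(w)$ for $\chi\le 0$ by collapsing the $w$-contour to the cuts and using $(\rho_1)_\pm=(\rho_2)_\mp$ there (at the price of a sign). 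Without this swap the decay estimate fails for half the smooth region, and the proof as written does not go through.

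There are also two places where your geometric picture is off. First, the four critical points on $C_\ell^+$ are already on a real locus of $\mathcal R$ (they project into $I_\ell\subset\RR$), so they are not produced in conjugate pairs, and generically the critical points of $F_1$ and of $F_2$ on $I_\ell$ project to \emph{different} real points; the paper tracks only $z_{j_-}$, the critical point of $F_{j_-}$, which it shows is a local maximum of $\re F_{j_-}$ on $I_\ell$. Second, the claim that $\Gamma'$ lies entirely outside $\Gamma$ while crossing it transversely near the saddles is self-contradictory, and it is not what the construction requires: the $w$-contour $\Gamma_{\chi,\eta}$ is a disjoint union of loops around each branch cut, some lying inside the $z$-contour $\gamma_{\chi,\eta}$ and some outside (see Figure \ref{fig:smooth:first_deformation}), and crucially it is kept at a \emph{strictly positive} distance from $\Omega_{\chi,\eta}$ while $\gamma_{\chi,\eta}$ stays inside $\Omega_{\chi,\eta}^{(0)}$. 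This separation is what makes the remaining double integral decay exponentially, which is the hallmark of the smooth phase; your Laplace estimate with contours touching at the saddles would at best give polynomial decay (and is delicate because of the $1/(z-w)$ singularity), conflating the smooth and rough regimes.
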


\begin{theorem}\label{thm:local_rough}
 Consider the $ 2\times k $-periodic Aztec diamond of size $ kN\times kN $ with $ N $ even. Let $ m,m',\xi, \xi' $ be defined in \eqref{eq:sec_model:local_coordinates} and let $ K $ be the correlation kernel in Theorem \ref{thm:finite_kernel}. For $ (\chi,\eta) \in \mathcal G_R $,
 \begin{multline}
 \lim_{N\to \infty}\left[K\left(2km,2\xi+i;2km',2\xi'+j\right)\right]_{i,j=0}^1 \\
  = \left[K_{rough}^{(\chi,\eta)}(2k\kappa,2\zeta+i;2k\kappa',2\zeta'+j)\right]_{i,j=0}^1,
\end{multline}
where 
 \begin{multline}
  \left[K_{rough}^{(\chi,\eta)}(2k\kappa,2\zeta+i;2k\kappa',2\zeta'+j)\right]_{i,j=0}^1 \\
  = \frac{\mathds{1}_{\chi\leq 0}-\mathds{1}_{\kappa>\kappa'}}{2\pi\i}\oint_{\gamma_{1,0}} \Phi(z)^{\kappa-\kappa'}z^{\zeta'-\zeta}\frac{\d z}{z} \\
  +\frac{1}{2\pi\i}\int_{\gamma_{z_1}} z^{\zeta'-\zeta}\rho_{1+\mathds{1}_{\chi\leq 0}}(z)^{\kappa-\kappa'}E(z)
  \begin{pmatrix}
   \mathds{1}_{\chi>0} & 0 \\
   0 & \mathds{1}_{\chi\leq 0}
  \end{pmatrix}
  E(z)^{-1}\frac{\d z}{z}, \\
  \kappa,\kappa',\zeta,\zeta' \in \ZZ.
 \end{multline}
 Here $ \gamma_{z_1} $ is a simple curve going from $ \overline {z_1} $ to $ z_1 $ and intersect the real line at $ (1,\infty) $, where $ z_1 = z_1(\chi,\eta) $ is the first component of $ L(\chi,\eta) $ and $ \gamma_{0,1} $ is a simple closed curve going around $ 0 $ and $ 1 $.
\end{theorem}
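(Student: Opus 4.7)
My plan is to carry out a classical steepest-descent analysis of the finite-$N$ double integral in Theorem \ref{thm:finite_kernel}. After substituting the global-plus-local splitting \eqref{eq:sec_model:local_coordinates} and decomposing $\Phi(z)^{m-N/2}=\sum_{j=1,2}\rho_j(z)^{m-N/2}P_j(z)$ with spectral projectors $P_j(z)=E(z)e_{jj}E(z)^{-1}$, I would absorb the power factors $z^{-\xi-1}$, $w^{\xi'}$, $(1-z^{-1})^{kN/2}$, $(1-w^{-1})^{kN/2}$ and $\rho_1(w)^{N/2-m'}$ into exponentials. The double integral part of $K$ then splits as $I_1+I_2$ with
$$I_j = \frac{1}{(2\pi\i)^2}\oint_{\gamma_1}\oint_{\gamma_{0,1}}\frac{e^{N(F_1(w;\chi,\eta)-F_j(z;\chi,\eta))}}{z-w}\,R_j(z,w)\,\d z\,\d w,$$
where $F_j(z;\chi,\eta)=F(z,(-1)^{j-1}p_0(z)^{1/2};\chi,\eta)$ is the function \eqref{eq:sec_model:F} evaluated on the $j$-th sheet of $\mathcal R$ and $R_j$ collects the subexponential factors, including the projector $P_1(w)P_j(z)$ and the local-coordinate dependence through $\zeta-\zeta'$ and $\kappa-\kappa'$.

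\textbf{Saddle points, contour deformation, and residues.} By Lemmas \ref{lem:critical_points}--\ref{lem:zeros_on_loops} and Proposition \ref{prop:L_map}, for $(\chi,\eta)\in\mathcal G_R$ the function $F$ on $\mathcal R$ has exactly one conjugate pair of non-real critical points, namely $L(\chi,\eta)=(z_1,w_1)\in\mathcal R_{12}$ and its complex conjugate in $\mathcal R_{21}$, while the remaining $2(k'-1)$ critical points sit on the loops $C_\ell^+$ and are subdominant. I would deform $\gamma_{0,1}$ and $\gamma_1$ onto steepest-descent contours through the non-real saddles. The sign of $\chi$ decides which of $I_1,I_2$ carries the dominant saddle (since generically $|\rho_1|>|\rho_2|$), and correspondingly in which direction the $z$- and $w$-contours must be moved relative to each other. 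Crossing the simple pole at $z=w$ picks up a residue; using $P_j(w)\Phi(w)^{m-N/2}=\rho_j(w)^{m-N/2}P_j(w)$ together with $P_1+P_2=I$, the total residue contribution from $I_1+I_2$ assembles into a single-contour integral of $\Phi(w)^{m-m'}$. Combined with the pre-existing $-\mathds{1}_{m>m'}$ single-integral term of Theorem \ref{thm:finite_kernel} (which tends to $-\mathds{1}_{\kappa>\kappa'}$), the residue bookkeeping produces exactly the prefactor $\mathds{1}_{\chi\leq 0}-\mathds{1}_{\kappa>\kappa'}$ of the first summand in the statement.

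\textbf{Saddle contribution and the main obstacle.} On the deformed contours the integrand is uniformly exponentially small away from $z_1,\bar z_1$, and a standard residue-of-a-difference argument between the two steepest-descent passages collapses their contribution to an integral along an arc $\gamma_{z_1}$ from $\bar z_1$ to $z_1$ crossing $(1,\infty)$. Evaluating $R_j$ along this arc returns precisely $z^{\zeta'-\zeta}\rho_j(z)^{\kappa-\kappa'}P_j(z)/z$ with $j=1$ in the case $\chi>0$ and $j=2$ in the case $\chi\leq 0$, matching the index $1+\mathds{1}_{\chi\leq 0}$ and the projector displayed in the theorem. The hard part will be the global topology of the deformation on the genus-$(k'-1)$ Riemann surface $\mathcal R$: one must verify that the steepest-descent contours through the non-real saddles can be connected to the original $\gamma_{0,1}$ and $\gamma_1$ without crossing branch cuts of $p_0^{1/2}$ or singularities of $E$, that the $2(k'-1)$ subdominant real saddles on the loops $C_\ell^+$ do not obstruct the deformation, and that the direction of each residue crossing is correctly correlated with $\mathrm{sgn}\,\chi$ and $\mathrm{sgn}(\kappa-\kappa')$. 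With these topological facts in hand, the usual uniform estimates on the subexponential factor $R_j$ yield the claimed limit.
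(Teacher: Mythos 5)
Your high-level strategy (steepest descent through the non-real saddle $L(\chi,\eta)$, residues at the pole $z=w$, and splitting $\Phi(z)^{m-N/2}$ via the spectral projectors) matches the paper's framework, and the role you assign to the $\gamma_{z_1}$ arc and the $\mathds{1}_{\kappa>\kappa'}$ term are correct. However there is a concrete gap for the case $\chi \leq 0$.

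You write the double integral with exponent $N\left(F_1(w;\chi,\eta)-F_j(z;\chi,\eta)\right)$, keeping the index $1$ fixed in the $w$-slot. That is correct when $\chi>0$, since then $j_-=1$. But when $\chi\leq 0$ one has $j_-=2$, and by Lemma \ref{lem:rough:critical_points} the saddle $z_1$ is a critical point of $F_{j_-}=F_2$, not of $F_1$. In fact for $\chi\leq 0$ the function $F_1$ has \emph{no} non-real critical point, so a steepest-descent contour for $e^{NF_1(w)}$ through $z_1$ does not exist, and the decay estimates of Lemma \ref{lem:rough:curves} (which bound $\re F_{j_-}(w)-\re F_{j_-}(z)$ and $\re F_{j_-}(w)-\re F_{j_+}(z)$, not $\re F_1(w)-\re F_j(z)$) do not apply. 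The proposal never resolves this sign-dependence of the $w$-exponent.

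The paper's fix is precisely the step you worry about avoiding: for $\chi\leq 0$ it deliberately deforms $\gamma_1$ through the branch cuts of $p_0^{1/2}$ and back, using $(\rho_1)_\pm=(\rho_2)_\mp$ on the cuts to replace $\rho_1(w)^{N/2-m'}$ by $\rho_2(w)^{N/2-m'}=\rho_{j_-}(w)^{N/2-m'}$ (and $E(w)\,\mathrm{diag}(1,0)\,E(w)^{-1}$ by the other projector). That branch-cut deformation both puts the correct exponent $F_{j_-}(w)$ in place, allowing Lemma \ref{lem:rough:curves} to close the estimate, and generates a residue at $w=z$ which is the \emph{origin} of the $\mathds{1}_{\chi\leq 0}$ prefactor in the first summand of $K_{rough}^{(\chi,\eta)}$. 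In your proposal this prefactor is attributed to generic ``residue bookkeeping'' when crossing $z=w$ during the deformation to steepest-descent contours, but that crossing only produces the $\gamma_{z_1}$ integral; without the preliminary eigenvalue switch you cannot obtain either the correct decay or the $\mathds{1}_{\chi\leq 0}$ term. So the concern about ``connecting to $\gamma_{0,1}$ and $\gamma_1$ without crossing branch cuts'' has it backwards: crossing the cut (twice) is exactly the mechanism needed.
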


The indicator function depending of $ \chi $ appear in the formula in Theorem \ref{thm:local_rough} since the integral is written as an integral on the complex plane while it more naturally is an integral on the Riemann surface $ \mathcal R $.

As a consistency check we compute the correlation kernel for the uniform weighting.
\begin{example}\label{ex:sine_kernel}
 Consider the uniform weighting, that is $ \alpha_i=\beta_i = 1 $ for $ i=1,\dots,k $. Let $ (\chi,\eta) \in \mathcal G_R $ and let $ \theta = \theta(\chi,\eta) $ and $ u=u(\chi,\eta) $ be the argument respectively the absolute value of the projection of $ L(\chi,\eta) $ to its first argument. Consider the kernel on the line $ \kappa=\kappa' $ with $ \zeta,\zeta'\in \ZZ $. By Theorem \ref{thm:local_rough},
 \begin{multline}
  \left[K_{rough}^{(\chi,\eta)}(2k\kappa,2\zeta+i;2k\kappa,2\zeta'+j)\right]_{i,j=0}^1 \\
  = \frac{1}{4\pi\i}\int_{\gamma_{z_1}} z^{\zeta'-\zeta}
  \begin{pmatrix}
   1 & -z^{-\frac{1}{2}} \\
   -z^\frac{1}{2} & 1
  \end{pmatrix}
  \frac{\d z}{z} \\
  = \left[\frac{g(2\zeta+i)}{g(2\zeta'+j)}\mathbb S_\theta(2\zeta+i,2\zeta'+j)\right]_{i,j=0}^1,
 \end{multline}
 where $ \mathbb S_\theta(x,y) = \frac{\sin\left(\frac{\theta}{2}(x-y)\right)}{\pi(x-y)} $ and $ g(x) = (-u^\frac{1}{2})^x $.
\end{example}
  
\begin{remark}
 The limiting kernel for the frozen region is left out in order to make this paper no longer than necessary.
\end{remark}

\begin{remark}
 Let $ \kappa-\kappa' $ be fixed. Deform $ \gamma_\ell $ in the the formula in Theorem \ref{thm:local_smooth} to a circle of radius $ r\in (x_{2\ell},x_{2\ell-1}) $. By change of variables
 \begin{equation}
  \left[K_{smooth}^{(\ell)}(2k\kappa,2\zeta+i;2k\kappa',2\zeta'+j)\right]_{i,j=0}^1 = r^{\zeta'-\zeta}\hat f(\zeta-\zeta'),
 \end{equation}  
 where
 \begin{multline}
  f(z)=\mathds{1}_{\kappa\leq \kappa'}\rho_1(rz)^{\kappa-\kappa'}E(rz)
  \begin{pmatrix}
   1 & 0 \\
   0 & 0
  \end{pmatrix}
  E(rz)^{-1} \\
  -\mathds{1}_{\kappa>\kappa'}\rho_2(rz)^{\kappa-\kappa'}E(rz)
  \begin{pmatrix}
   0 & 0 \\
   0 & 1
  \end{pmatrix}
  E(rz)^{-1},
  \end{multline}
  and $ \hat f $ is the Fourier coefficient of $ f $. Since $ f $ is analytic in a neighbourhood of the unit circle $ \hat f(\zeta'-\zeta) $ decay exponentially as $ \zeta'-\zeta $ increases. So
 \begin{equation}
  K_{smooth}^{(\ell)}(2k\kappa,2\zeta+i;2k\kappa',2\zeta'+j)K_{smooth}^{(\ell)}(2k\kappa',2\zeta'+j;2k\kappa,2\zeta+i)
 \end{equation}
 decay exponentially as $ \zeta'-\zeta $ increases.
 
 The correlation kernel in the rough region is basically the Fourier coefficient of a smooth function times the indicator function of an interval, in the same way as the correlation kernel in the smooth region is the Fourier coefficient of an analytic function. By integration by parts it follows that 
  \begin{equation}
  K_{rough}^{(\chi,\eta)}(2k\kappa,2\zeta+i;2k\kappa',2\zeta'+j)K_{rough}^{(\chi,\eta)}(2k\kappa',2\zeta'+j;2k\kappa,2\zeta+i)
 \end{equation}
  decay polynomially.
\end{remark}

\section{Proof of Proposition \ref{prop:zeros_of_p}}
In this section we prove Proposition \ref{prop:zeros_of_p}. We first show that the zeros of $ p $ has to be real. The following lemma was proved by Petter Br\"{a}nd\'{e}n.

\begin{lemma}\label{lem:real_zeros}
 Consider the $ 2\times k $-periodic Aztec diamond with corresponding polynomial $ p $ given in \eqref{eq:sec_model:def_p}. The polynomial $ p $ has only real roots.
\end{lemma}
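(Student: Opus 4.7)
The identity
\[
 p(z) = \bigl(T(z) - 2(z-1)^k\bigr)\bigl(T(z) + 2(z-1)^k\bigr),
\]
with $T(z):=\Tr M(z)$ and $M(z):=(z-1)^k\Phi(z)$, expresses $p$ as a product of two real polynomials. Since $\det U_m(z) = \det V_m(z) = 1 - z^{-1}$ (with $U_m,V_m$ denoting the first resp.~second factor in \eqref{eq:sec_model:weight}), the prefactor $(1-z^{-1})^{-k}$ in $\Phi$ is cancelled and $M(z) = \prod_{m=1}^k (zU_m(z)V_m(z))$ is an entrywise polynomial matrix of degree $k$ with $\det M(z)=(z-1)^{2k}$. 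In particular $p(z)$ is, up to sign, the $\lambda$-discriminant of the bivariate characteristic polynomial
\[
 P(z,\lambda):=\det(\lambda I - M(z)) = \lambda^2 - T(z)\lambda + (z-1)^{2k},
\]
and for $z_0\neq 1$ a zero of $p$ is equivalent to $\Tr\Phi(z_0) = \pm 2$, i.e.~to $\Phi(z_0)$ having $\pm 1$ as a double eigenvalue.

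My first step would be to eliminate the positive real axis by a total-positivity argument. Rescaling $\tilde U_m:=U_m/\sqrt{1-z^{-1}}$ and $\tilde V_m:=V_m/\sqrt{1-z^{-1}}$, for $z>1$ these are entrywise positive matrices in $SL(2,\RR)$, so $\Phi(z)=\prod_m \tilde U_m\tilde V_m$ is totally positive in $SL(2,\RR)$ with strictly positive off-diagonal entries, giving $\Tr\Phi(z)>2\sqrt{\det\Phi(z)}=2$. A parity-of-$k$ sign analysis of $\prod_m U_mV_m$ versus $(1-z^{-1})^k$ handles $0<z<1$, yielding $|\Tr\Phi(z)|>2$ there as well. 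At $z=0$ one computes $M(0)=\prod_m B_m$ with $B_m=\begin{pmatrix}\alpha_m\beta_m^{-1}&\alpha_m+\beta_m\\0&\alpha_m^{-1}\beta_m\end{pmatrix}$ upper triangular, and the normalisation $\prod_m\alpha_m=\prod_m\beta_m$ forces $\Tr M(0)=2$, hence $p(0)=0$, accounting for the root $s_0=0$. In particular $p$ has no roots in $[0,\infty)$ other than $0$, consistent with Proposition \ref{prop:zeros_of_p}.

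The main task is to rule out non-real zeros of $p$, and the natural framework is real stability of the bivariate polynomial $P(z,\lambda)$: if one can show $P(z,\lambda)\neq 0$ whenever $\im z>0$ and $\im\lambda>0$, then by standard stability-preservation results (Borcea--Br\"and\'en) the $\lambda$-discriminant $p(z)$ is automatically real rooted. To verify this stability, I would exploit the factorisation $M(z)=\prod_{m=1}^k W_m(z)$ where
\[
 W_m(z) = zU_m(z)V_m(z) = zA_m + B_m, \qquad A_m=\begin{pmatrix}1&0\\\alpha_m^{-1}+\beta_m^{-1}&1\end{pmatrix},
\]
with $A_m$ and $B_m$ (as above) both totally nonnegative unimodular $2\times 2$ matrices. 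Such pencils of totally nonnegative matrices are natural building blocks compatible with real stability, and the characteristic polynomial of a product of them should inherit the requisite non-vanishing on the product of upper half planes.

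The main obstacle is this last bivariate stability step. A purely one-variable Herglotz argument on $\Tr\Phi$ cannot work: for $k=2$ with uniform weights $\alpha_m=\beta_m=1$ one computes $\Tr\Phi(\i)=-6\in\RR$, so $\Tr\Phi$ does take real values in the open upper half plane. The real-rootedness of $p$ must therefore be proved genuinely at the bivariate level, and it is precisely the totally nonnegative structure of the transfer matrices $W_m(z)=zA_m+B_m$, i.e.~the positivity hypothesis $\alpha_m,\beta_m>0$, that powers the stability argument.
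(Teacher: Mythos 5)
Your preliminary observations (the factorization $p=p_+p_-$, the behavior on $\RR_{\geq 0}$, the failure of a one-variable Herglotz argument on $\Tr\Phi$) are fine, but the bivariate real-stability framework you propose for the main step does not work, and the problem is not merely that the stability is unproved. First, $P(z,\lambda)=\lambda^2-T(z)\lambda+(z-1)^{2k}$ with $T(z)=(z-1)^k\Tr\Phi(z)$ is provably \emph{not} real stable: if it were, then for every $z\in\HH$ both $\lambda$-roots would lie in the closed lower half-plane, so their sum $T(z)$ would satisfy $\im T(z)\leq 0$ on $\HH$; that is, $-T$ would be a Pick function, forcing $\deg T\leq 1$ with non-positive leading coefficient. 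But the proof of Proposition~\ref{prop:zeros_of_p} shows $T$ has degree $k$ with positive coefficients and leading coefficient $2$, so for $z=R\e^{\i\theta}$ with $R$ large and $0<\theta<\pi/k$ one has $\im T(z)\approx 2R^k\sin(k\theta)>0$. (Relatedly, real stability would force $p\geq 0$ on $\RR$, impossible since $p$ has odd degree $2k-1$ with positive leading coefficient.) Second, even for a genuinely real stable polynomial quadratic in $\lambda$ the $\lambda$-discriminant need not be real-rooted in the other variable: $P(z,\lambda)=\lambda^2+z\lambda-1$ is real stable, since $P=0$ forces $z=\lambda^{-1}-\lambda$, which has negative imaginary part whenever $\lambda\in\HH$, yet its discriminant $z^2+4$ has roots $\pm 2\i$. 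So the ``standard Borcea--Br\"{a}nd\'{e}n result'' you invoke does not exist in this form, and the totally nonnegative pencil heuristic cannot close the gap.

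The paper's proof of Lemma~\ref{lem:real_zeros}, due to Br\"{a}nd\'{e}n, is topological rather than algebraic. Each factor of $(z-1)^k\Phi(z)$ is encoded as a M\"{o}bius transformation, and for fixed $z\in\HH$ the composition $\Psi$ is shown to preserve the cone $C_z=\{\xi:0<\arg\xi<\arg z\}$; Brouwer's fixed point theorem produces a fixed point $z_0\in\overline{C_z}\setminus\{0,\infty\}$, hence an eigenvector $(1,z_0)^T$ of $(z-1)^k\Phi(z)$. The same argument applied to $(z-1)^k\Phi(z)^{-1}$ and a disjoint cone $\tilde C_z$ gives a second, linearly independent eigenvector, and one checks that $(z-1)^k\Phi(z)$ is not a scalar matrix; hence the discriminant $p(z)$ does not vanish for $\im z>0$. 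The positivity hypothesis $\alpha_m,\beta_m>0$ does ``power'' the argument, as you anticipate, but through invariance of the cone $C_z$ under the associated M\"{o}bius maps, not through bivariate stability of the characteristic polynomial.
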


\begin{proof}
Recall that $ p $ is the discriminant of $ (z-1)^k\Phi(z) $. We prove that if $ z \not\in \RR $ then $ (z-1)^k \Phi(z) $ has two linearly independent eigenvectors and is not equal to a multiple of the identity matrix. That the eigenvalues are linearly independent implies that the eigenvalues are distinct or that the matrix is a multiple of the identity matrix. Since the matrix is not a multiple of the identity matrix the discriminant of $ (z-1)^k\Phi(z) $, that is $ p $, is non-zero. Since $ p $ is a real polynomial it is enough to prove the above for $ z $ in the upper half plane.
 
 Fix $ z \in \CC $ with $ \im z>0 $ and define for $ i=1,\dots, k $ the M\"{o}bius transformations
 \begin{equation}
  \varphi_{2i-1}(\xi) = \alpha_i^{-1}\frac{z+\alpha_i^{-1}\xi}{1+\alpha_i^{-1} \xi}, \quad \varphi_{2i}(\xi) = \beta_i\frac{z+\beta_i\xi}{1+\beta_i \xi},
 \end{equation}  
 and $ \Psi = \varphi_1\circ\varphi_2\circ \dots \circ \varphi_{2k} $. Consider the cone
 \begin{equation}
  C_z=\{\xi\in \CC:0<\arg \xi <\arg z\}.
 \end{equation}
 Let $ \xi \in C_z $, since $ C_z $ is a cone with boundary generated by $ z $ and $ 1 $ it follows that
 \begin{equation}
  0<\arg(z+\alpha_i^{-1}\xi)-\arg(1+\alpha_i^{-1}\xi)<\arg z.
 \end{equation}
 That is, $ \varphi_{2i-1}(C_z) \subseteq C_z $. The same is true for $ \varphi_{2i} $. Iterating this over all $ i $ we get $ \Psi(C_z) \subseteq C_z $.
 
 Let $ \varphi $ be a M\"{o}bius transformation taking $ C_z $ to a bounded set $ B_z $. Then $ \varphi \circ \Psi \circ \varphi^{-1}(\overline{B_z}) \subseteq \overline{B_z} $. By Brouwer fixed point theorem there is a point $ \xi_0 \in \overline{B_z} $ such that $ \varphi \circ \Psi \circ \varphi^{-1}(\xi_0)=\xi_0 $, so $ z_0=\varphi^{-1}(\xi_0)\in \overline{C_z} $ is a fixed point of $ \Psi $. Observe that  $ \varphi_{2k-1}\circ \varphi_{2k}(\xi) \in C_z $ if $ \xi \in \{0,\infty\} $. So neither $ 0 $ nor $ \infty $ are fixed points of $ \Psi $.
 
 Represent a M\"{o}bius transform
 \begin{equation}
  \psi(\xi) = \frac{c+d\xi}{a+b\xi} \text{ as } 
  A_\psi = 
  \begin{pmatrix}
   a & b \\
   c & d
  \end{pmatrix}.
 \end{equation}
 This representation is slightly different from the standard representation, but fits our purposes. Namely that
 \begin{equation}
  A_\psi
  \begin{pmatrix}
   1 \\  
   \xi
  \end{pmatrix}
  = (a+b\xi)
  \begin{pmatrix}
   1 \\  
   \psi(\xi)
  \end{pmatrix},
 \end{equation} 
 and thus any fixed point of $ \psi $ gives an eigenvector of $ A_\psi $. For two M\"{o}bius transformations $ \psi $ and $ \phi $ it holds that $ A_{\psi\circ \phi} = A_\psi A_\phi $, so
 \begin{equation}
  A_\Psi = (z-1)^k\Phi(z).
 \end{equation}
 Using the fixed point for $ \Psi $ we write down an eigenvector for $ (z-1)^k\Phi(z) $,
 \begin{equation}
  (z-1)^k\Phi(z)
  \begin{pmatrix}
   1 \\
   z_0
  \end{pmatrix}
  = A_\Psi
  \begin{pmatrix}
   1 \\
   z_0
  \end{pmatrix}
  = \lambda 
  \begin{pmatrix}
   1 \\
   \Psi(z_0)
  \end{pmatrix}
  = \lambda
  \begin{pmatrix}
   1 \\
   z_0
  \end{pmatrix},
 \end{equation}
 where $ \lambda = ((z-1)^k\Phi(z))_{11}+((z-1)^k\Phi(z))_{12}z_0 $. The above equality also implies that $ (z-1)^k\Psi(z) $ is not a multiple of the identity matrix. Indeed, in case $ (1,0) $ is an eigenvector of $ (z-1)^k\Phi(z) $, then $ \Psi(0) = 0 $, which we know is not the case.
 
 We use the same argument on $ (z-1)^k\Phi(z)^{-1} $, with  
 \begin{equation}
  \tilde\varphi_{2i-1}(\xi) = -\alpha_i\frac{z-\alpha_i\xi}{1-\alpha_i \xi} \text{ and } \varphi_{2i}(\xi) = -\beta_i^{-1}\frac{z-\beta_i^{-1}\xi}{1-\beta_i^{-1} \xi},
 \end{equation}
 instead of $ \varphi_{2i-1} $ and $ \varphi_{2i} $, $ \tilde \Psi = \tilde \varphi_{2k}\circ \tilde \varphi_{2k-1}\circ\dots \circ \tilde \varphi_1 $ instead of $ \Psi $ and  
 \begin{equation}
  \tilde C_z=\{\xi\in \CC:-\pi<\arg \xi<\arg(- z)\},
 \end{equation}
 instead of $ C_z $. We obtain a $ \tilde z_0 \in \tilde C_z $ such that
 \begin{equation}
  (z-1)^k\Phi(z)^{-1}
  \begin{pmatrix}
   1 \\
   \tilde z_0
  \end{pmatrix}
  = \tilde \lambda
  \begin{pmatrix}
   1 \\
   \tilde z_0
  \end{pmatrix}.
 \end{equation}
 In particular $ (1,\tilde z_0) $ is an eigenvector of $ (z-1)^k\Phi(z) $.
 
 To summarize $ (1,z_0) $ and $ (1,\tilde z_0) $ are both eigenvectors of $ (z-1)^k\Phi(z) $. Since the intersection of the closure of $ C_z $ and the closure of $ \tilde C_z $ is $ \{0,\infty\} $ and $ z_0\not\in \{0,\infty\} $, we conclude that $ (1,z_0) $ and $ (1,\tilde z_0) $ are linearly independent. Moreover $ (z-1)^k\Phi(z) $ is not a multiple of the identity matrix.
\end{proof}

In order to prove Proposition \ref{prop:zeros_of_p} we first prove the following lemma. The lemma is custom made in order to fit our purposes so the aim is not to give a general statement.

\begin{lemma}\label{lem:interlacing_zeros}
 Let $ u $ and $ v $ be real polynomials of degree $ n $ respectively $ n-1 $ for some $ n>0 $ and with zeros
 \begin{equation}
  1>s_1\geq s_2\geq \dots\geq s_n \text{ respectively } 1>t_1\geq t_2\geq \dots\geq t_{n-1}.
 \end{equation}
  Assume that $ v^{(n-\ell)}(z) < u^{(n-\ell)}(z) $ if $ \ell $ is even and $ u^{(n-\ell)}(z) < v^{(n-\ell)}(z) $ if $ \ell $ is odd for $ \ell = 1,\dots,n $ and $ z<1 $. Assume further that $ u^{(\ell)}(1),v^{(\ell)}(1)>0 $ for $ \ell = 0,\dots,n-1 $. Then
 \begin{equation}
  s_1 > t_1\geq t_2>s_2\geq s_3 > t_3\geq t_4>\dots>s_{n-1}\geq s_n,
 \end{equation}
 if $ n $ is odd and 
 \begin{equation}
  t_1 > s_1\geq s_2>t_2\geq t_3 > s_3\geq s_4>\dots>s_{n-1}\geq s_n,
 \end{equation}
 if $ n $ is even.
\end{lemma}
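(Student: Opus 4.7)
The plan is to argue by induction on $n$. First, observe that under the standing hypothesis both $u$ and $v$ have positive leading coefficients: writing $u(z) = c_n\prod_{i=1}^n(z-s_i)$, the value $u(1) = c_n\prod_{i=1}^n(1-s_i) > 0$ forces $c_n > 0$ since each factor $1-s_i$ is positive, and analogously for $v$. The base case $n=1$ is vacuous (there are no $t_i$'s and $s_1 < 1$ is given). For $n=2$, the inequality $v(z) < u(z)$ for $z < 1$ evaluated at $z = s_1, s_2$ gives $v(s_1), v(s_2) < 0$; since $v$ is linear with positive leading coefficient and vanishes at $t_1$, this forces $s_1, s_2 < t_1$, i.e.\ $t_1 > s_1 \geq s_2$, as claimed.

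For the inductive step the key observation is that the pair $(u',v')$ itself satisfies the hypotheses of the lemma with $n$ replaced by $n-1$. Indeed, $u'$ has degree $n-1$ and $v'$ has degree $n-2$; the zeros of $u'$ lie in the convex hull of the zeros of $u$ by Gauss--Lucas (or Rolle's theorem), hence are strictly less than $1$; the derivative inequalities translate directly, since $(u')^{((n-1)-\ell)} = u^{(n-\ell)}$ and $(v')^{((n-1)-\ell)} = v^{(n-\ell)}$ match the original inequalities parity-by-parity; and the positivity at $z=1$ survives because $(u')^{(\ell)}(1) = u^{(\ell+1)}(1) > 0$ for $\ell=0,\ldots,n-2$, and likewise for $v'$. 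By the induction hypothesis, the zeros $s'_1 \geq \cdots \geq s'_{n-1}$ of $u'$ interlace with the zeros $t'_1 \geq \cdots \geq t'_{n-2}$ of $v'$ in the pattern determined by the parity of $n-1$.

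To transfer this interlacing back to $u$ and $v$, combine Rolle's theorem with the remaining hypothesis at level $\ell = n$. Rolle (together with the fact that a zero of $u$ of multiplicity $r$ is a zero of $u'$ of multiplicity $r-1$) places $s'_i \in [s_{i+1}, s_i]$ and analogously locates $t'_j$ between consecutive zeros of $v$, pinching the $s_i$ and $t_j$ into windows compatible with the desired interlacing up to ambiguity at the extreme zeros. The $\ell = n$ inequality, which reads $u(z) < v(z)$ for $z < 1$ when $n$ is odd and $v(z) < u(z)$ for $z < 1$ when $n$ is even, then pins down the signs $v(s_i)$ and $u(t_j)$; combined with the alternating sign pattern of $u$ and $v$ between their distinct zeros (a consequence of positive leading coefficients), this forces each zero into the correct sign-interval of the other polynomial. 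In particular the outermost zero is identified as $s_1$ (for $n$ odd) or $t_1$ (for $n$ even), and simultaneous vanishing $s_i = t_j$ is precluded by strictness of the derivative inequalities.

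The delicate step is this final reconstruction: one must carefully handle multiple zeros so that the conclusion produces the correct mixture of strict inequalities $>$ between zeros of different polynomials and non-strict inequalities $\geq$ between two consecutive zeros of the same polynomial, and one must rule out coincidences $s_i = t_j$. Both are controlled by the strict inequalities on $\{z<1\}$, but the case analysis according to the multiplicity pattern of the $s_i$ and $t_j$ is the main technical nuisance.
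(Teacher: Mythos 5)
Your plan is the same as the paper's: induct on $n$, check that the derivative pair $(u',v')$ satisfies the hypotheses with $n$ replaced by $n-1$, and transfer the inherited interlacing back to $u,v$ by combining Rolle's theorem with the sign comparison coming from the $\ell=n$ inequality. The base case and the verification that $(u',v')$ inherits the hypotheses are correct and match the paper.

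Where you stop short is exactly the ``final reconstruction'' that you defer as a ``technical nuisance requiring a case analysis by multiplicity.'' The paper does not perform such a case analysis. Instead it makes a single sweep from the outermost zero inward, alternating between two elementary moves (stated here for $n$ even, so the $\ell=n$ hypothesis is $v<u$ on $z<1$). The anchor is $t_1>s_1$, which follows because $v(s_1)<u(s_1)=0<v(1)$. Then, assuming the chain has been established down to some point, the next ``cross'' inequality is obtained purely from the induction-hypothesis chain on $(s'_j,t'_j)$ together with Rolle interlacing: e.g.\ $s_1\geq s_1'>t_1'\geq t_2$ gives $s_1>t_2$, and later $t_2\geq t_2'>s_2'\geq s_3$ gives $t_2>s_3$, etc. The next ``same-side'' inequality then comes from the sign comparison: on the closed interval between two consecutive zeros of one polynomial the other polynomial cannot vanish (because $v<u\leq 0$ on $[s_{i+1},s_i]$, or $u>v\geq 0$ on $[t_{j+1},t_j]$), which forces e.g.\ $s_2>t_2$ once $s_1>t_2$ is known. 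Coincidences $s_i=t_j$ are automatically excluded by the strictness of $v<u$, and coalesced zeros $s_i=s_{i+1}$ (or $t_j=t_{j+1}$) make the corresponding interval degenerate, so the step holds trivially rather than requiring a separate branch. So the approach is right, but the part you flagged as delicate is in fact a short linear argument once set up as this alternation; you should carry it out rather than appeal to an unspecified case analysis.
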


\begin{proof}
 The proof goes by induction over $ n $.
 
 If $ n=2 $ the assumptions says that $ 1>s_1,s_2,t_1 $ where $ u(s_1)=u(s_2)=v(t_1)=0 $, $ u(1),v(1)>0 $ and $ v(z)<u(z) $ for $ z<0 $. Hence, since both $ u $ and $ v $ are positive at $ 1 $ and the graph of $ v $ lies under the graph of $ u $ for $ z<1 $ we see that $ t_1>s_1\geq s_2 $.
 
 Assume $ n>2 $ and for simplicity that $ n $ is even. The derivatives $ u' $ and $ v' $ fulfill the conditions with $ n $ replaced by $ n-1 $. So
 \begin{equation}\label{eq:proof_interlacing_zeros:zeros_derivative}
  s_1' > t_1'\geq t_2'>s_2'\geq s_3' > t_3'\geq t_4'>\dots>s_{n-2}'\geq s_{n-1}',
 \end{equation}
 where $ s_i' $ and $ t_i' $ are the zeros of $ u' $ respectively $ v' $. Recall that for a real polynomial with only real zeros there is an interlacing structure between the zeros of the polynomial and the zeros of its derivative. Since $ u(1),v(1)>0 $ and $ v(z)<u(z) $ we get that $ t_1>s_1 $. By the interlacing property mentioned above $ t_1' \geq t_2\geq t_2' $ and $ s_1 \geq s_1' $ and hence by \eqref{eq:proof_interlacing_zeros:zeros_derivative} $ s_1 > t_2 $. It follows since $ v(z)<u(z) $ that $ v $ is negative for $ z \in [s_2,s_1] $, hence $ s_2 > t_2 $. We thus have $ t_1>s_1\geq s_2>t_2 $. We use the same argument again. Namely, by the interlacing structure and \eqref{eq:proof_interlacing_zeros:zeros_derivative}, $ t_2>s_3 $. Since $ v(z)<u(z) $ we get that $ u $ is strictly positive on $ [t_2,t_3] $ which implies that $ t_3>s_3 $. Hence $ t_2\geq t_3>s_3 $. Iterate this argument until there are no more zeros to conclude that
  \begin{equation}
  t_1 > s_1\geq s_2>t_2\geq t_3 > s_3\geq s_4>\dots>s_{n-1}\geq s_n.
 \end{equation}
 If $ n $ is odd the same argument holds, it is basically a matter of changing the name of the zeros and the details are left to the reader. This proves the statement.
\end{proof}

Let
\begin{equation}\label{eq:sec_zeros:ppm}
 p_\pm(z) = (z-1)^k\Tr \Phi(z) \pm 2(z-1)^k.
\end{equation}
These polynomials will play the role of $ u $ respectively $ v $ in previous lemma and since $ p=p_+p_- $ we obtain the structure of the zeros of $ p $.

\begin{proof}[Proof of Proposition \ref{prop:zeros_of_p}]
 By definition the zeros of $ p $ are precisely the zeros of $ p_+ $ together with the zeros of $ p_- $. By Lemmma \ref{lem:real_zeros} the zeros of $ p $ and hence $ p_+ $ and $ p_- $ are real. To prove the proposition we show that the zeros are non-positive, and that $ p_+ $ and $ p_- $ fulfill the condition of Lemma \ref{lem:interlacing_zeros} which concludes the proof.

Rewrite $ (z-1)^k\Phi(z) $ to
 \begin{equation}\label{eq:proof_zeros_of_p:capital_phi}
  (z-1)^k\Phi(z) = \prod_{m=1}^k
  \begin{pmatrix}
   z+\alpha_m\beta_m^{-1} &\alpha_m+\beta_m\\
   z(\alpha_m^{-1}+\beta_m^{-1}) & z+\beta_m\alpha_m^{-1}
  \end{pmatrix}.
 \end{equation} 
 It follows from \eqref{eq:proof_zeros_of_p:capital_phi} that $ (z-1)^k\Tr\Phi(z) $ is a real polynomial of degree $ k $ with positive coefficients and leading coefficient $ 2 $. So $ p_+ $ is a polynomial of degree $ k $ and $ p_- $ is a polynomial of degree $ k-1 $.
 
 Ignoring the part on the anti-diagonal in \eqref{eq:proof_zeros_of_p:capital_phi} using positivity yield, for $ z>0 $, the inequality
 \begin{equation}
  (z-1)^k\Tr \Phi(z)\geq \prod_{m=1}^k(z+\alpha_m\beta_m^{-1})+\prod_{m=1}^k(z+\alpha_m^{-1}\beta_m) = \sum_{\ell=0}^ka_\ell z^\ell,
 \end{equation}
 where 
 \begin{equation}
  a_\ell=\sum_{i=1}^{\binom{k}{\ell}}(\gamma_i+\gamma_i^{-1})\geq 2 \binom{k}{\ell},
 \end{equation}
 and $ \gamma_i =\prod \alpha_m\beta_m^{-1} $ where the product is over a subset of $ \{1,\dots,k\} $. So both $ p_+ $ and $ p_- $ and hence $ p $ has non-negative coefficients. Hence, all zeros of $ p $ are non-positive.
 
 Note that $ p_+(z) = p_-(z)+4(z-1)^k $ and therefore 
 \begin{equation}
  p_+^{(k-\ell)}(z) = p_-^{(k-\ell)}(z)+4k(k-1)\cdots (\ell+1)(z-1)^\ell,
 \end{equation}
 for $ \ell = 1,\dots,k $. So $ p_-^{(k-\ell)}(z) < p_+^{(k-\ell)}(z) $ if $ \ell $ is even and $ p_+^{(k-\ell)}(z) < p_-^{(k-\ell)}(z) $ if $ \ell $ is odd. Since $ p_+ $ and $ p_- $ has non-negative coefficients it is clear that $ p_\pm^{\ell}(1)>0 $ for $ \ell=0,\dots,k-1 $.
 
 By Lemma \ref{lem:interlacing_zeros} applied to $ u=p_+ $ and $ v=p_- $ we conclude, since $ p=p_+p_- $, the ordering of the zeros.

 Finally, using that $ \alpha_1\cdots\alpha_k=\beta_1\cdots\beta_k $ in \eqref{eq:proof_zeros_of_p:capital_phi}, it follows that $(-1)^k\Phi(0) = 2 $, so $ s_0=0 $.

\end{proof}

\section{Properties of the eigenvalues}\label{sec:properties_of_eigenvalues}

For the proofs of the statements in Section \ref{sec:global_local_properties} it is convenient to express $ \rho $ and $ F $ in local coordinates. We prove a few properties of $ \rho $ and $ F $, defined in \eqref{eq:sec_model:F} respectively \eqref{eq:sec_model:eigenvalue}, that will be used multiple times throughout the paper.

In local coordinates, away from the special points $ y_0,\dots,y_{2k'-1} \in \mathcal R $, $ \rho $ is given by 
\begin{equation}\label{eq:sec_eigenvalues:local_eigenvalue}
 \rho_1(z) = \frac{1}{2}\Tr \Phi(z) + \frac{q(z)p_0(z)^\frac{1}{2}}{2(z-1)^k},
\end{equation}
and
\begin{equation}
 \rho_2(z) = \frac{1}{2}\Tr \Phi(z) - \frac{q(z)p_0(z)^\frac{1}{2}}{2(z-1)^k},
\end{equation}
recall the definition of $ q $ and $ p_0^\frac{1}{2} $ given just after Proposition \ref{prop:zeros_of_p} respectively just before Proposition \ref{prop:L_map}. This definition of $ \rho_1 $ and $ \rho_2 $ coincide with the definition given just after \eqref{eq:sec_model:generalized_weight}, see Lemma \ref{lem:eigenvalues} \eqref{eq:lem_eigenvalues:inequality} below. Close to $ x_\ell $, $ \ell=0,\dots,2(k'-1) $ there are neighborhoods of $ 0 $ respectively $ x_\ell $ and a biholomorphic map $ \phi_{x_\ell} $ between these neighborhoods such that $ \phi_{x_\ell}(0) = x_\ell $ and $ Q(\phi_{x_\ell}(w),w) = 0 $ where $ \phi_{x_\ell} $ is defined. Since $ p_0 $ has a simple zero at $ x_\ell $ we obtain, by differentiate $ Q(\phi_{x_\ell}(w),w) = 0 $, that
\begin{equation}\label{eq:sec_properties:derivative_local_map}
 \phi_{x_\ell}'(0) = 0,
\end{equation}
and the zero is simple. We use this local coordinate to express $ \rho $ close to $ y_\ell $,
\begin{equation}
 \rho_{y_\ell}(w) = \frac{1}{2}\Tr \Phi(\phi_{x_\ell}(w)) + \frac{q(\phi_{x_\ell}(w))w}{2(\phi_{x_\ell}(w)-1)^k}.
\end{equation}
At $ y_{2k'-1} $ we set $ \tau = z^{-\frac{1}{2}} $. Then in a neighborhood of $ \tau = 0$ the map $ (z,w) \mapsto \tau $ is a  chart and the inverse is given by $ \tau \mapsto (\tau^{-2},\tau^{-(2k'-1)}\sigma(\tau)) $, where $ \sigma $ is analytic in a neighborhood of zero and $ \sigma(\tau)^2 = p_0(z)z^{-(2k'-1)} $. In this local coordinate $ \rho $ becomes
\begin{equation}
 \rho_{y_{2k-1}}(\tau) = \frac{1}{2}\Tr \Phi(\tau^{-2}) + \tau\frac{\tau^{2(k-k')} q(\tau^{-2})\sigma(\tau)}{2(1-\tau^2)^k}.
\end{equation}

Similarly we express $ F $ in local coordinates. Away from $ y_0,\dots,y_{2(k'-1)} \in \mathcal R $, $ F $ in local coordinates is given by
\begin{equation}\label{eq:sec_eigenvalues:F1}
 F_1(z;\chi,\eta)=\frac{k}{4}(\eta+1)\log z -\frac{k}{2}\log(z-1)-\frac{\chi}{2}\log \rho_1(z),
\end{equation}
and
\begin{equation}\label{eq:sec_eigenvalues:F2}
 F_2(z;\chi,\eta)=\frac{k}{4}(\eta+1)\log z -\frac{k}{2}\log(z-1)-\frac{\chi}{2}\log \rho_2(z).
\end{equation}
At $ y_\ell $ for $ \ell=0,\dots,2(k'-1) $,
\begin{equation}
 F_{y_\ell}(w;\chi,\eta)=\frac{k}{4}(\eta+1)\log \phi_{x_\ell}(w) -\frac{k}{2}\log(\phi_{x_\ell}(w)-1)-\frac{\chi}{2}\log \rho_{x_\ell}(w),
\end{equation}
 and
\begin{equation}
 F_{y_{2k'-1}}(\tau;\chi,\eta)=\frac{k}{4}(\eta+1)\log \tau^{-2} -\frac{k}{2}\log(\tau^{-2}-1)-\frac{\chi}{2}\log \rho_{x_{2k'-1}}(\tau),
\end{equation}
in a neighborhood of $ y_{2k'-1} $.

 In the following two lemmas important properties of the eigenvalues of $ \Phi $ are stated and proved. Recall that $ I_\ell = (x_{2\ell},x_{2\ell-1}) $.
  
 \begin{lemma}\label{lem:eigenvalues}
 The eigenvalues of $ \Phi $ have the following properties.
  \begin{enumerate}[(i)]
   \item For $ i=1,2 $, $ \rho_i(z) \to 1 $ as $ |z| \to \infty $ or $ z \to 0 $.\label{eq:lem_eigenvalues:at_infinity}
   \item For all $ z \in (-\infty,0)\backslash \cup_{\ell=1}^{k'-1} \overline{I_\ell} $, $ |\rho_i(z)| = 1 $ for $ i=1,2 $. \label{eq:lem_eigenvalues:at_cuts}
   \item For $ z \in \CC \backslash \left(\{1\}\cup(-\infty,0)\backslash \cup_{\ell=1}^{k'-1} \overline{I_\ell}\right) $, $ |\rho_2(z)| <1< |\rho_1(z)| $. \label{eq:lem_eigenvalues:inequality}
   \item For $ z \in \CC \backslash \left(\{1\}\cup(-\infty,0)\backslash \cup_{\ell=1}^{k'-1} \overline{I_\ell}\right) $, $ \rho_i(\overline z) = \overline{\rho_i(z)} $ for $ i = 1,2 $. \label{eq:lem_eigenvalues:conjugate}
  \end{enumerate}
 \end{lemma}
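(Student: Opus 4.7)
My plan rests on three structural facts used throughout: $\det\Phi(z)=1$ (each $\phi_{2m-1}\phi_{2m}$ has determinant $1$), so $\rho_1(z)\rho_2(z)=1$; $\Phi(z)$ has real rational coefficients, so $\Phi(\bar z)=\overline{\Phi(z)}$; and the discriminant of the characteristic polynomial of $(z-1)^k\Phi(z)$ is the polynomial $p(z)=q(z)^2 p_0(z)$ whose zeros have the sign structure established in Proposition \ref{prop:zeros_of_p}.

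For \textbf{(i)} I would compute the limits of $\Phi$ directly from the definition. As $|z|\to\infty$ each factor $\phi_{2m-1}(z)\phi_{2m}(z)$ tends to a lower triangular matrix with diagonal $(1,1)$, so the product $\Phi(\infty)$ is lower triangular with diagonal $(1,1)$ and both eigenvalues equal $1$. For the limit at $z=0$ I would use the polynomial identity \eqref{eq:proof_zeros_of_p:capital_phi}: each $M_m(0)$ is upper triangular with diagonal $(\alpha_m\beta_m^{-1},\alpha_m^{-1}\beta_m)$, and $\prod_m M_m(0)$ is upper triangular with diagonal $(1,1)$ by the Aztec condition $\prod\alpha_m=\prod\beta_m$; dividing by $(z-1)^k$ shows that $\rho_i(z)\to 1$ in modulus as $z\to 0$.

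For \textbf{(ii)} fix a real $z\in(-\infty,0)\setminus\bigcup\overline{I_\ell}$. Then $\Phi(z)$ is a real matrix of determinant $1$, whose eigenvalues are either two reals with product $1$ or a complex conjugate pair on the unit circle. The sign of the discriminant $(\Tr\Phi(z))^2-4=p(z)/(z-1)^{2k}$ is the sign of $p_0(z)$, since $q(z)^2\geq 0$ and $(z-1)^{2k}>0$. The polynomial $p_0$ has positive leading coefficient, odd degree $2k'-1$, and simple zeros $0=x_0>x_1>\cdots>x_{2(k'-1)}$; alternation of sign shows $p_0\leq 0$ exactly on $(-\infty,0]\setminus\bigcup I_\ell$. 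Hence the eigenvalues form a complex conjugate pair on the unit circle, giving $|\rho_i(z)|=1$.

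The main content is \textbf{(iii)}, for which I plan to use the minimum modulus principle. Let $\Omega=\CC\setminus\bigl((-\infty,0]\cup\{1\}\bigr)$, which is connected and open. All branch points of $\rho$ lie in $(-\infty,0]$, so $\rho_1$ is single-valued analytic on $\Omega$; and $\rho_1\rho_2=1$ gives that $\rho_1$ is non-vanishing. The inequality $|\rho_1|\geq|\rho_2|$ together with $\rho_1\rho_2=1$ gives $|\rho_1|\geq 1$ on $\Omega$. Moreover $\rho_1$ is not constant: $\Phi$ has a pole of order $k$ at $z=1$ (from the prefactor $(1-z^{-1})^{-k}$), which forces $|\rho_1|\to\infty$ as $z\to 1$. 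The minimum modulus principle (equivalently, the maximum principle applied to the non-vanishing analytic function $\rho_2=1/\rho_1$) therefore rules out interior local minima of $|\rho_1|$, whence $|\rho_1(z)|>1$ strictly for every $z\in\Omega$. On the intervals $\overline{I_\ell}$ one has $p_0\geq 0$, giving two real eigenvalues with product $1$; in the interior of each $I_\ell$ these are distinct, so exactly one has absolute value larger than one, and continuity of the labeling from the upper half-plane identifies $\rho_1$ with that eigenvalue (at the endpoints $x_\ell$ of $\overline{I_\ell}$, where $p_0$ vanishes, the eigenvalues coincide with common value $\pm 1$, which is the one place where strict inequality is replaced by equality). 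Finally, \textbf{(iv)} is then immediate: conjugation gives $\{\rho_1(\bar z),\rho_2(\bar z)\}=\{\overline{\rho_1(z)},\overline{\rho_2(z)}\}$ as multisets, and the strict inequality from (iii) pins down the labels uniquely by modulus, forcing $\rho_i(\bar z)=\overline{\rho_i(z)}$; on $\overline{I_\ell}$ both eigenvalues are real so the claim is automatic. The hard step is (iii), and the technical point to be careful about there is the labeling of $\rho_1$ versus $\rho_2$ across $\overline{I_\ell}$ and at the endpoints where the two sheets of $\mathcal R$ meet.
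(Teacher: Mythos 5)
Your proof follows essentially the same route as the paper's: (i) via the same triangular limits of $\Phi$ at $0$ and $\infty$; (ii) via the sign of the discriminant and the conjugate-pair observation; (iii) via the maximum/minimum modulus principle with the boundary data supplied by (i), (ii) and the blow-up at $z=1$; and (iv) via the conjugation symmetry of the characteristic polynomial plus (iii) to match labels. Your (iv), matching by modulus rather than by a continuity argument from $z>0$, is a clean variant, and you are in fact more careful than the paper in (i): $(z-1)^k\Phi(z)|_{z=0}$ is upper triangular with diagonal $(1,1)$, so $\rho_i(0)=(-1)^k$ and only $|\rho_i(z)|\to 1$, which — as you note — is all that (iii) actually uses.

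Two points in (iii) deserve attention. First, the paper applies the maximum principle to $\rho_2/\rho_1$ on the larger domain $\CC\setminus\bigl(\{1\}\cup(-\infty,0)\setminus\bigcup\overline{I_\ell}\bigr)$, on which $\rho_1,\rho_2$ are still single-valued analytic because the branch cuts of $p_0^{1/2}$ are exactly $(-\infty,0]\setminus\bigcup I_\ell$; this makes the intervals $I_\ell$ interior points and removes the need for a separate argument there. Your $\Omega=\CC\setminus\bigl((-\infty,0]\cup\{1\}\bigr)$ puts both sides of each $I_\ell$ on $\partial\Omega$, so you need the boundary behavior of $|\rho_1|$ along $I_\ell$ before, rather than after, invoking the minimum modulus principle. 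Second, the step ``the inequality $|\rho_1|\geq|\rho_2|$ gives $|\rho_1|\geq 1$ on $\Omega$'' quietly imports the modulus-based labeling which (iii) is, in effect, meant to verify against the explicit formula \eqref{eq:sec_eigenvalues:local_eigenvalue}; the paper avoids this by deriving $|\rho_2/\rho_1|\to 1$ on the boundary directly from (i) and (ii), and $\to 0$ at $z=1$, and letting the strong maximum principle produce the strict inequality. Replacing the ``$\geq$ by definition'' step with the same boundary data, and enlarging $\Omega$ to include the $I_\ell$, closes both issues and brings your argument into exact alignment with the paper's.
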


 \begin{proof}
  \begin{enumerate}[(i)]
   \item The statement follows by \eqref{eq:sec_model:weight} and \eqref{eq:proof_zeros_of_p:capital_phi}, since $ \Phi(z) $ tends to a lower respectively upper triangular matrix with ones on the diagonal as $ |z| \to \infty $ respectively $ z\to 0 $.
   \item Let $ z \in (-\infty,0)\backslash \cup_{\ell=1}^{k'-1} \overline{I_\ell} $. It follows from the definition of $ p_0^\frac{1}{2} $ that $ p_0(z)^\frac{1}{2} \in \i \RR $ which implies that $ \overline{\rho_1(z)} = \rho_2(z) $. Hence
   \begin{equation}
    |\rho_1(z)|^2=\rho_1(z)\overline{\rho_1(z)}=\rho_1(z)\rho_2(z) = \det \Phi(z) = 1.
   \end{equation}
   The statement for $ \rho_2 $ now follows since $ \rho_1\rho_2=1 $.
   \item We know that $ \rho_1 $ and $ \rho_2 $ are analytic and non-zero in $ \CC \backslash \left(\{1\}\cup(-\infty,0)\backslash \cup_{\ell=1}^{k'-1} \overline{I_\ell}\right) $ which implies that $ \rho_2/\rho_1 $ is analytic in $ \CC \backslash \left(\{1\}\cup(-\infty,0)\backslash \cup_{\ell=1}^{k'-1} \overline{I_\ell}\right) $. By \eqref{eq:lem_eigenvalues:at_infinity} and \eqref{eq:lem_eigenvalues:at_cuts} $ |\rho_2(z)|/|\rho_1(z)| \to 1 $ as $ z $ tends to the boundary away from one, while $ |\rho_2(z)|/|\rho_1(z)| \to 0 $ as $ z \to 1 $. It follows by the maximum principle for analytic functions that 
   \begin{equation}
    \left|\frac{\rho_2(z)}{\rho_1(z)}\right| < 1.
   \end{equation}
   The result now follows since $ \rho_1\rho_2 = 1 $. 
   \item Note first that for $ i=1,2 $,
   \begin{equation}
    \rho_i(\overline z)^2-\overline{\Tr \Phi(z)} \rho_i(\overline z)+1=0,
   \end{equation}
   that is $ \rho_i(\overline z) $ is a root to the characteristic polynomial of $ \overline{\Phi(z)} $. So $ \rho_i(\overline z) $ is equal to $ \overline{\rho_1(z)} $ or $ \overline{\rho_2(z)} $. If $ z>0 $ then $ \rho_i(\overline{z}) = \rho_i(z) = \overline{\rho_i(z)} $. By continuity and \eqref{eq:lem_eigenvalues:inequality} the statement follows.
  \end{enumerate}
 \end{proof}

\begin{lemma}\label{lem:logarithmic_derivative}
 For all $ z \in \CC \backslash \left((-\infty,0)\backslash \cup_{\ell=1}^{k'-1} \overline{I_\ell}\right) $,
 \begin{equation}\label{eq:lem_logarithmic_derivative:quotient}
  \frac{\rho_1'(z)}{\rho_1(z)}=-\frac{\rho_2'(z)}{\rho_2(z)},
 \end{equation}
 and
 \begin{equation}\label{eq:lem_logarithmic_derivative:log_derivative}
  \frac{\rho_1'(z)}{\rho_1(z)} = \frac{q(z)^{-1}(z-1)^{k+1}\Tr \Phi'(z)}{(z-1)p_0(z)^\frac{1}{2}},
 \end{equation}
 where
 \begin{equation}\label{eq:lem_logarithmic_derivative:derivative_trace}
  (z-1)^{k+1}\Tr \Phi'(z) = (z-1)\frac{\d}{\d z}\left((z-1)^k\Tr \Phi(z)\right)-k(z-1)^k\Tr \Phi(z),
 \end{equation}
 and $ q(z)^{-1}(z-1)^{k+1}\Tr \Phi'(z) $ is a polynomial of degree $ k'-1 $ which is non-zero at $ x_\ell $ for $ \ell=0,\dots,k'-1 $. Moreover $ \frac{\rho_1'(z)}{\rho_1(z)} \in \RR $ for $ z \in I_\ell $, $ \ell=0,\dots,k'-1 $. 
\end{lemma}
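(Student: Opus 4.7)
The plan is to derive both identities from the characteristic equation of $\Phi$ and then handle the polynomial claim via the factorization $p = p_+ p_-$ from \eqref{eq:sec_zeros:ppm}. For the symmetry $\rho_1'/\rho_1 = -\rho_2'/\rho_2$, first observe that $\det \Phi(z) \equiv 1$: in the product \eqref{eq:sec_model:weight} each of the $2k$ matrix factors has determinant $1 - z^{-1}$, and the scalar prefactor $(1-z^{-1})^{-k}$ contributes $(1-z^{-1})^{-2k}$ to the determinant, so everything cancels. Thus $\rho_1 \rho_2 = 1$ and the identity follows by differentiating $\log \rho_1 + \log \rho_2 = 0$. (It is also transparent from \eqref{eq:sec_eigenvalues:local_eigenvalue}, where $\rho_1$ and $\rho_2$ differ only by the sign of the square root.)

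For the explicit formula \eqref{eq:lem_logarithmic_derivative:log_derivative}, I would apply implicit differentiation to $\rho_1^2 - \Tr \Phi(z)\,\rho_1 + 1 = 0$, obtaining $\rho_1'\,(2\rho_1 - \Tr \Phi) = (\Tr \Phi)'\,\rho_1$. From \eqref{eq:sec_eigenvalues:local_eigenvalue} one reads off $2\rho_1 - \Tr \Phi = q(z)\,p_0(z)^{1/2}/(z-1)^k$, so after rearrangement and multiplying numerator and denominator by $(z-1)$ one obtains \eqref{eq:lem_logarithmic_derivative:log_derivative}. The identity \eqref{eq:lem_logarithmic_derivative:derivative_trace} is just the Leibniz rule applied to $\frac{\d}{\d z}[(z-1)^k \Tr \Phi(z)]$, solved for $(z-1)^k(\Tr \Phi)'(z)$ and multiplied by $(z-1)$.

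The substantive step is to show that $N(z) := q(z)^{-1}(z-1)^{k+1}(\Tr \Phi)'(z)$ is a polynomial of degree $k' - 1$ not vanishing at any $x_\ell$. Setting $P(z) := (z-1)^k \Tr \Phi(z)$ and $R(z) := 2(z-1)^k$ (both of degree $k$), we have $p = P^2 - R^2 = p_+ p_-$ with $p_\pm = P \pm R$, and \eqref{eq:lem_logarithmic_derivative:derivative_trace} reads $(z-1)^{k+1}(\Tr \Phi)'(z) = (z-1) P'(z) - k P(z)$. At each zero $s$ of $q$—equivalently, each double zero of $p$, necessarily with $s \leq 0$ and hence $s \neq 1$ by Proposition \ref{prop:zeros_of_p}—the relation $P(s)^2 = R(s)^2$ forces $s$ to be a zero of exactly one of $p_\pm$, and in fact a double zero of that one, so $p_\sigma(s) = p_\sigma'(s) = 0$ for a unique $\sigma \in \{+,-\}$. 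Hence $P(s) = -\sigma R(s) = -2\sigma (s-1)^k$ and $P'(s) = -\sigma R'(s) = -2\sigma k (s-1)^{k-1}$; substituting gives $(s-1) P'(s) - k P(s) = 0$. Because the zeros of $q$ are simple, $q$ divides $(z-1)^{k+1}(\Tr \Phi)'(z)$. For the degree count, both $P$ and $R$ start with $2z^k$, so the $z^k$-contribution of $(z-1) P'(z) - k P(z)$ cancels; positivity of the coefficients of $P$ (noted in the proof of Proposition \ref{prop:zeros_of_p}) ensures the $z^{k-1}$-coefficient is non-zero, so the polynomial has degree exactly $k-1$, and dividing by the monic $q$ of degree $k - k'$ gives $N$ of degree $k' - 1$. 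Non-vanishing of $N$ at $x_\ell$ follows from the same substitution at a \emph{simple} zero of $p_\sigma$: vanishing would force $p_\sigma'(x_\ell) = 0$ and upgrade $x_\ell$ to a double zero, contradicting simplicity.

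The real-valuedness of $\rho_1'/\rho_1$ on each $I_\ell$ is then immediate from the explicit formula: one has $p_0 > 0$ on $I_\ell$ (these intervals lie in the complement of the cuts defining $p_0^{1/2}$, and $p_0$ alternates sign across its simple zeros starting from positive on $(0,\infty)$), so $p_0^{1/2}$ is real there, while the remaining factors $q$, $(z-1)$ and $N$ have real coefficients. The hard part is the divisibility argument above, which crucially pairs each double zero of $p$ with a simultaneous zero of $p_\sigma$ and $p_\sigma'$ for exactly one sign $\sigma$.
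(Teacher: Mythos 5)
Your proposal is correct and follows essentially the same route as the paper: first equality by differentiating $\rho_1\rho_2=1$, second equality by differentiating the characteristic equation and reading off $2\rho_1-\Tr\Phi=q\,p_0^{1/2}/(z-1)^k$, and the polynomial claim via the identity $(z-1)P'-kP=(z-1)p_\pm'-kp_\pm$ evaluated at the (double resp.\ simple) zeros of $p_\pm$. The only cosmetic difference is that you expand $P(s)$ and $P'(s)$ explicitly in terms of $R$ at the zeros of $q$, whereas the paper notes directly that the expression equals $(z-1)p_\sigma'-kp_\sigma$ and both terms vanish at a double zero of $p_\sigma$; the degree count and the non-vanishing at $x_\ell$ are argued in the same way.
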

\begin{proof}
The first equality follows by differentiating the equality $ \rho_1(z)\rho_2(z) = 1 $. 
 
Recall the definition of $ q $ and $ p_0 $, namely that $ (z-1)^{2k}\left((\Tr\Phi(z))^2-4\right) = q(z)^2p_0(z) $. The second equality is obtained by differentiating
\begin{equation}
 \rho_1(z) = \frac{1}{2}\Tr \Phi(z) + \frac{1}{2}\left((\Tr\Phi(z))^2-4\right)^\frac{1}{2},
\end{equation}
to get 
\begin{equation}
 \rho_1'(z) = \frac{\Tr \Phi'(z)}{\left((\Tr\Phi(z))^2-4\right)^\frac{1}{2}}\rho_1(z),
\end{equation}
and then rewrite the denominator in terms of $ q $ and $ p_0 $.
 
By the product rule
\begin{equation}
 (z-1)^{k+1}\Tr \Phi'(z) = (z-1)\frac{\d}{\d z}\left((z-1)^k\Tr \Phi(z)\right)-k(z-1)^k\Tr \Phi(z),
\end{equation}
which is a polynomial of degree $ k-1 $, since $ (z-1)^k\Tr \Phi(z) $ is of degree $ k $ and the leading coefficient in the two terms on the right hand side is equal while the coefficient in front of $ z^{k-1} $ does not cancel out each other. Note also that the right hand side is equal to $ (z-1)p_\pm'(z)-kp_\pm(z) $, recall \eqref{eq:sec_zeros:ppm}. A root of $ q $ is a double root of either $ p_+ $ or $ p_- $, since $ p_+ $ and $ p_- $ has no common zeros. Therefore $ q $ divides $ (z-1)p_\pm'(z)-kp_\pm(z) $ and in particular $ q(z)^{-1}(z-1)^{k+1}\Tr \Phi'(z) $ is a polynomial of degree $ k-1-(k-k')=k'-1 $. Now, a zero of $ p_0 $ is a simple zero of $ p_+ $ or $ p_- $, so
\begin{equation}
 (x_\ell-1)^{k+1}\Tr \Phi'(x_\ell) = (x_\ell-1)p_\pm'(x_\ell) \neq 0.
\end{equation}
The last statement follows directly from \eqref{eq:lem_logarithmic_derivative:log_derivative} since $ p_0(z)^\frac{1}{2} \in \RR $ for $ z\in I_\ell $. 
\end{proof}

\section{Proof of Lemmas \ref{lem:critical_points} and \ref{lem:zeros_on_loops}}\label{sec:proof_phases}

In this section we prove Lemmas \ref{lem:critical_points} and \ref{lem:zeros_on_loops}. Lemma \ref{lem:critical_points} is a direct corollary of the following lemma.

\begin{lemma}\label{lem:projected_critical_points}
 Consider the function,
 \begin{equation}\label{eq:polynomial_of_critical_points}
  P(z;\chi,\eta) = k^2\frac{p_0(z)}{z}(z(\eta-1)-(\eta+1))^2-4\chi^2z\left(\frac{(z-1)^{k+1}\Tr \Phi'(z)}{q(z)}\right)^2.
 \end{equation}
 If the point $ (z,w) \in \mathcal R $ is a critical point of $ F(z,w;\chi,\eta) $, defined in \eqref{eq:sec_model:F}, then $ z $ is a root of the polynomial above. Conversely, if $ z $ is a solution of the above polynomial then there is a $ w $ such that $ (z,w) \in \mathcal R $ is a critical point of $ F(z,w;\chi,\eta) $. Moreover the order of the zero and the order of the critical point is the same.
\end{lemma}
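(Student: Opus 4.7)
The plan is to reduce the lemma to the algebraic identity
\[
P(z;\chi,\eta)=16\,z(z-1)^2 p_0(z)\,F_1'(z;\chi,\eta)\,F_2'(z;\chi,\eta),
\]
where $F_1,F_2$ are the single-valued restrictions of $F$ to the two sheets given in \eqref{eq:sec_eigenvalues:F1}--\eqref{eq:sec_eigenvalues:F2}. Differentiating and applying Lemma \ref{lem:logarithmic_derivative} yields
\[
F_{i}'(z;\chi,\eta)=\frac{k\bigl(z(\eta-1)-(\eta+1)\bigr)}{4z(z-1)}\mp\frac{\chi}{2}\cdot\frac{q(z)^{-1}(z-1)^{k+1}\Tr\Phi'(z)}{(z-1)\,p_0(z)^{1/2}},
\]
with sign $-$ for $i=1$ and $+$ for $i=2$. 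The product is then a difference of squares; after clearing the common denominator $16z^2(z-1)^2 p_0(z)$ and comparing with \eqref{eq:polynomial_of_critical_points}, the identity follows.

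Away from the branch points $y_1,\dots,y_{2(k'-1)}$ and the two exceptional points $y_0$ and $y_{2k'-1}$, the coordinate $z$ is a local chart on $\mathcal R$ and $F$ coincides with $F_1$ on one sheet and with $F_2$ on the other. Thus $(z,w)$ is a critical point of $F$ of order $m$ iff $z$ is a zero of $F_1'$ (on the sheet $w=p_0(z)^{1/2}$) or $F_2'$ (on the sheet $w=-p_0(z)^{1/2}$) of order $m$; the choice of sheet in the converse direction is forced by the sign in the unsquared linear equation $F_i'(z)=0$. Together with the identity and the non-vanishing of $16z(z-1)^2 p_0(z)$ on the complement of $\{0,1,x_1,\dots,x_{2(k'-1)}\}$, this establishes both implications and the matching of orders at non-special $z$.

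What remains is to rule out critical behavior at the special points. At $y_0$ and $y_{2k'-1}$ the local expansions give $F_{y_0}(w)=\tfrac{k(\eta+1)}{2}\log w + O(1)$ (using $\phi_{x_0}(0)=x_0=0$) and $F_{y_{2k'-1}}(\tau)=-\tfrac{k(\eta+1)}{2}\log\tau + O(1)$, so $F$ has logarithmic singularities there and is never critical; on the polynomial side $P(0;\chi,\eta)=k^2 p_0'(0)(\eta+1)^2\neq 0$ for $(\chi,\eta)\in(-1,1)^2$, and $\deg P=2k'$ confirms no zero at infinity. For a branch point $y_\ell$, $1\le\ell\le 2(k'-1)$, the vanishing $\phi_{x_\ell}'(0)=0$ from \eqref{eq:sec_properties:derivative_local_map} collapses the chain rule to
\[
\frac{dF_{y_\ell}}{dw}(0)=-\frac{\chi}{2}\cdot\frac{\rho_{y_\ell}'(0)}{\rho_{y_\ell}(0)}=-\frac{\chi\,q(x_\ell)}{(x_\ell-1)^k\,\Tr\Phi(x_\ell)},
\]
which is non-zero for $\chi\neq 0$ since $q(x_\ell)\neq 0$ (as $x_\ell$ is a simple zero of $p$) and $\Tr\Phi(x_\ell)=2\rho_1(x_\ell)\neq 0$; concurrently $P(x_\ell;\chi,\eta)=-4\chi^2 x_\ell\bigl(q^{-1}(x_\ell-1)^{k+1}\Tr\Phi'(x_\ell)\bigr)^2\neq 0$ by Lemma \ref{lem:logarithmic_derivative}, so $y_\ell$ is not critical and $x_\ell$ is not a root of $P$, consistently. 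The degenerate case $\chi=0$ is handled directly from the explicit form of both sides.

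The main technical obstacle is the order bookkeeping at the isolated $z_0$ where $F_1'$ and $F_2'$ vanish simultaneously; by the identity the multiplicities add to give the order of the zero of $P$, while the two separate critical points on the two sheets contribute their individual orders on each sheet, so the claimed matching is read off on the appropriate sheet.
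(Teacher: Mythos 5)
Your proof takes the same route as the paper's: establish the identity $P(z;\chi,\eta)=16\,z(z-1)^2 p_0(z)\,F_1'(z;\chi,\eta)\,F_2'(z;\chi,\eta)$ via Lemma \ref{lem:logarithmic_derivative}, observe that away from the special points the sheet coordinate $z$ turns the critical-point statement into a statement about zeros of $F_1'$ and $F_2'$, and then audit the special points $y_0$, $y_{2k'-1}$, $y_\ell$, and (implicitly) $z=1$.

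There are two places where the write-up leaves genuine work undone. First, you dismiss the $\chi=0$ branch-point case with ``handled directly from the explicit form of both sides,'' but this is precisely where the order claim is most delicate. When $\chi=0$, $y_\ell$ \emph{is} a critical point of $F$, and because $z-x_\ell\sim cw^2$ the map between the chart $w$ on $\mathcal R$ and the affine coordinate $z$ is $2$:$1$, so it is not automatic that the order of the zero of $F_{y_\ell}'$ in $w$ equals the order of the zero of $P$ in $z$. One has to actually expand: $F_{y_\ell}'(w)=\phi_{x_\ell}'(w)\left(\frac{k(\eta+1)}{4\phi_{x_\ell}(w)}-\frac{k}{2(\phi_{x_\ell}(w)-1)}\right)$, the factor $\phi_{x_\ell}'(w)\sim 2cw$ contributes one, and the bracket contributes zero or two more depending on whether $x_\ell=\frac{\eta+1}{\eta-1}$; this produces orders $1$ or $3$, which happens to match $P(z;0,\eta)=k^2\frac{p_0(z)}{z}(z(\eta-1)-(\eta+1))^2$ exactly because $p_0$ contributes one and the square contributes zero or two. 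Without this computation the ``same order'' clause is unproved in the one place it could plausibly fail. Second, you never verify $P(1;\chi,\eta)\neq0$. The clearing of denominators by $16z(z-1)^2p_0(z)$ could a priori introduce a spurious zero at $z=1$ (where $F$ is singular and hence cannot be critical), so one must check, using $(z-1)\frac{\rho_1'(z)}{\rho_1(z)}|_{z=1}=-k$, that the double pole of $F_1'F_2'$ cancels the $(z-1)^2$ and gives $P(1;\chi,\eta)=4k^2p_0(1)(1-\chi^2)\neq0$. Finally a small slip: the leading coefficient of the $\log\tau$ singularity of $F_{y_{2k'-1}}$ is $\frac{k(1-\eta)}{2}$, not $-\frac{k(\eta+1)}{2}$, since the $-\frac{k}{2}\log(\tau^{-2}-1)$ term also contributes $k\log\tau$; the conclusion that it is nonzero for $\eta\in(-1,1)$ is unaffected.
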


\begin{remark}
 That the order is the same in case both $ (z,w) $ and $ (z,-w) $ are critical points should be interpreted that the sum of the orders is the same as the order of the zero of $ P $. 
\end{remark}

\begin{remark}
 The function $ P $ is a polynomial of degree $ 2k' $, by Lemma \ref{lem:logarithmic_derivative} and since the degree of $ \frac{p_0(z)}{z} $ is $ 2k'-2 $.
\end{remark}

\begin{proof}
 Let $ (z,w) \in \mathcal R $. The proofs goes by showing that if $ (z,w) $ is a critical point of $ F $ then $ z $ is a zero of $ P $, and these are the only zeros of $ P $. 
 
 The point $ (z,w)\neq y_\ell $, for $ \ell=0,\dots,2k'-1 $, is a critical point of $ F $ if and only if $ z $ is a critical point of $ F_1 $ or $ F_2 $. The derivative of $ F_i $ is given by
 \begin{equation}
  F_i'(z;\chi,\eta)=\frac{k}{4}(\eta+1)\frac{1}{z} -\frac{k}{2}\frac{1}{z-1}-\frac{\chi}{2}\frac{\rho_i'(z)}{ \rho_i(z)},
 \end{equation}
 for $ i=1,2 $. From \eqref{eq:lem_logarithmic_derivative:quotient} and \eqref{eq:lem_logarithmic_derivative:log_derivative} we derive
 \begin{multline}
  F_1'(z;\chi,\eta)F_2'(z;\chi,\eta) \\= \frac{k^2}{16z^2(z-1)^2}(z(\eta-1)-(\eta+1))^2 
  -\frac{\chi^2}{4(z-1)^2}\frac{\left(\frac{(z-1)^{k+1}}{q(z)}\Tr \Phi'(z)\right)^2}{p_0(z)}.
 \end{multline}
By multiplying the equation with $ 16z(z-1)^2p_0(z) $ we obtain on the right hand side the polynomial
\begin{equation}
 P(z;\chi,\eta) = k^2\frac{p_0(z)}{z}(z(\eta-1)-(\eta+1))^2-4\chi^2z\left(\frac{(z-1)^{k+1}\Tr \Phi'(z)}{q(z)}\right)^2.
\end{equation}
By Lemma \ref{lem:logarithmic_derivative} it is clear that $ P $ is a polynomial. Hence the critical points of $ F_1 $ and $ F_2 $ are a subset of the zeros of $ P $. 

For $ \ell=0,\dots,2k'-1 $ the point $ y_\ell $ is a critical points of $ F $ if and only if $ 0 $ is a critical point of $ F_{y_\ell} $. Close to $ y_\ell $, $ \ell \neq 2k'-1 $,
 \begin{equation}
  F_{y_\ell}'(w;\chi,\eta)=\frac{k}{4}(\eta+1)\frac{\phi_{x_\ell}'(w)}{\phi_{x_\ell}(w)} -\frac{k}{2}\frac{\phi_{x_\ell}'(w)}{\phi_{x_\ell}(w)-1}-\frac{\chi}{2}\frac{\rho_{y_\ell}'(w)}{\rho_{y_\ell}(w)}.
 \end{equation}
 It holds that $ \rho_{y_\ell}(0) = \pm 1 $ by \eqref{eq:sec_zeros:ppm} where the sign depends on if $ x_\ell $ is a zero of $ p_+ $ or $ p_- $ and $ \rho_{y_\ell}'(0) = \frac{1}{2}q(x_\ell)(x_\ell-1)^{-k}\neq 0 $. If $ \ell = 0 $, then $ F_{y_0}'(0;\chi,\eta)=\infty $ and in particular non-zero. If $ \ell \neq  0 $ then, since $ \phi_{x_\ell}'(0)=0 $ and the zero is simple by \eqref{eq:sec_properties:derivative_local_map}, there is a zero of $ F_{y_\ell}' $ of order one or three at $ x_\ell $ if and only if $ \chi=0 $. The order is one if $ \frac{\eta+1}{\eta-1}\neq x_\ell $ and three if $ \frac{\eta+1}{\eta-1}=x_\ell $, which is seen by differentiating $ F'_{y_\ell} $. Such critical point is also a zero of $ P $ with the same order. If $ \ell=2k'-1 $ then a computation shows that $ \rho_{y_{2k'-1}}(0) = \pm 1 $ and $ \rho_{y_{2k'-1}}'(0) = \frac{\sigma(0)}{2} \neq 0 $. It follows that with $ \eta \in (-1,1) $, $ F_{y_{2k'-1}}'(0;\chi,\eta) = \infty $ and in particular non-zero. 
 
 Hence the critical points of $ F $ when projected down to $ \CC $ are zeros of $ P $. To see that they are precisely the zeros of $ P $, we need to check that we did not create any extra zeros in the derivation of $ P $ at $ x_\ell $, $ \ell=0,\dots,2(k'-1) $ or at $ z=1 $. There is no zero at $ z=1 $, namely
\begin{equation}
 P(1;\chi,\eta) = 4k^2p_0(1)(1-\chi^2) \neq 0.
\end{equation}
Recall that $ p_0 $ has a simple zero at $ x_0=0 $, so
\begin{equation}
 P(x_0;\chi,\eta) = k^2\frac{p_0(x_0)}{x_0}(\eta+1)^2 \neq 0.
\end{equation}
When $ \chi\neq 0 $ and $ \ell=1,\dots,2(k'-1) $, Lemma \ref{lem:logarithmic_derivative} implies that
\begin{equation}
 P(x_\ell;\chi,\eta) = -4\chi^2x_\ell\left(\frac{(x_\ell-1)^{k+1}\Tr \Phi'(x_\ell)}{q(x_\ell)}\right)^2 \neq 0.
\end{equation}
Hence, we did not create any fake zeros, that is, all zeros of $ P $ corresponds to a critical point of $ F $.
\end{proof}
 
\begin{proof}[Proof of Lemma \ref{lem:zeros_on_loops}]
 Cover $ C^+_\ell $ by four different charts, $ \psi_i $, $ i=1,\dots,4 $. Let $ C_i $ be a curve in the domain of $ \psi_i $ such that $ C^+_\ell = C_1\cup\dots\cup C_4 $. Let also $ \psi_i $ be such that $ \psi_i(C_i) \subset \RR $ and such that increasing along $ \psi_i(C_i) $ corresponds to going in the same direction in $ C^+_\ell $ for all $ i $. Consider the integral
 \begin{equation}
  \sum_{i=1}^4 \int_{\psi_i(C_i)} \frac{\d}{\d z}\left(F\circ \psi_i^{-1}\right)(z)\d z.
 \end{equation}
 Since $ C^+_\ell $ is a loop the sum is zero. By the choice of $ \psi_i $
 \begin{equation}
  \int_{\psi_i(C_i)} \frac{\d}{\d z}\left(F\circ \psi_i^{-1}\right)(z)\d z = \int_{a_i}^{b_i} \frac{\d}{\d t}\left(F\circ \psi_i^{-1}\right)(t)\d t, 
 \end{equation}
 for some $ a_i<b_i $. Since $ \rho $ is real on $ C_\ell^+ $ and therefore $ \im F $ is constant on $ C_\ell^+ $ we get that $ \frac{\d}{\d t}\left(F\circ \psi_i^{-1}\right)(t) \in \RR $. Hence $ \frac{\d}{\d t}\left(F\circ \psi_i^{-1}\right) $ has to change sign for at least two $ t \in \cup[a_i,b_i] $. By continuity and since $ \frac{\d}{\d t}\left(\psi_j \circ \psi_i^{-1}\right)(t)>0 $ when $ t $ is real and in the domain of definition of $ \psi_j \circ \psi_i^{-1} $, we conclude that
 \begin{equation}
  \frac{\d}{\d z}\left(F\circ \psi_i^{-1}\right)(z)=0,
 \end{equation}
 at two distinct points. That is, $ F $ has at least two critical points in $ C^+_\ell $.
\end{proof}

\section{Proof of Propositions \ref{prop:boundary}, \ref{prop:L_map} and \ref{prop:boundary_is_boundary}}\label{sec:proof_arctic_curves}
To prove Propositions \ref{prop:boundary}, \ref{prop:L_map} and \ref{prop:boundary_is_boundary} we first prove two technical lemmas to make the rest of the proofs more readable.

 Let $ I_0 = (0,\infty)\backslash\{1\} $ and recall that $ I_\ell = (x_{2\ell},x_{2\ell-1}) $ for $ \ell=1,\dots,k'-1 $.
\begin{lemma}\label{lem:surjective}
 For $ \ell=1,\dots,k'-1, $ the map $ I_\ell \ni z \mapsto \frac{z\rho_1'(z)}{\rho_1(z)} \in \RR $ is surjective and the map $ I_0 \ni z \mapsto \frac{z\rho_1'(z)}{\rho_1(z)} \in \RR\backslash\{0\} $ is surjective. Moreover
 \begin{equation}\label{eq:lem_surjective:limit_points}
  \lim_{z\to 0}\frac{z\rho_1'(z)}{\rho_1(z)} = \lim_{z\to \infty}\frac{z\rho_1'(z)}{\rho_1(z)} = 0,
 \end{equation}
 and
 \begin{equation}\label{eq:lem_surjective:at_one}
  (z-1)\frac{z\rho_1'(z)}{\rho_1(z)}|_{z=1} = -k,
 \end{equation}
 which implies that
 \begin{equation}\label{eq:lem_surjective:pole_at_one}
  \lim_{I_0 \ni z\to 1^{\pm}}\frac{z\rho_1'(z)}{\rho_1(z)} = \mp \infty.
 \end{equation}
\end{lemma}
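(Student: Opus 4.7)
The plan is to read everything off the closed expression for $\rho_1'/\rho_1$ provided by Lemma \ref{lem:logarithmic_derivative},
\[
\frac{\rho_1'(z)}{\rho_1(z)} = \frac{q(z)^{-1}(z-1)^{k+1}\Tr\Phi'(z)}{(z-1)\,p_0(z)^{1/2}},
\]
together with Lemma \ref{lem:eigenvalues}. By that same lemma the numerator is a real polynomial of degree $k'-1$ which does not vanish at any $x_\ell$. On $I_0\subset(0,\infty)$ all factors are real and nonzero away from the poles at $1$ and the zero $0$. On a gap $I_\ell$ with $\ell\ge 1$ the interval lies off the cuts of $p_0^{1/2}$, so $p_0^{1/2}$ extends analytically across $I_\ell$; a sign analysis of $p_0$ (odd-degree polynomial with positive leading coefficient, sign alternating across the simple zeros $x_0>x_1>\ldots$, and $p_0>0$ on $(0,\infty)$) gives $p_0>0$ on each gap, so $p_0^{1/2}$ is real there. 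This proves the last sentence of the lemma.

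For the limits \eqref{eq:lem_surjective:limit_points}, Lemma \ref{lem:eigenvalues}~(i) says $\rho_1$ has a nonzero limit at $z=0$ and at $z=\infty$, so $\rho_1'/\rho_1$ is bounded at both points and $z\rho_1'(z)/\rho_1(z)\to 0$. For \eqref{eq:lem_surjective:at_one}, the prefactor $(1-z^{-1})^{-k}$ in \eqref{eq:sec_model:weight} gives $\Phi(z) = (z-1)^{-k}\Psi(z)$ with $\Psi$ holomorphic and invertible near $z=1$, so $\rho_1(z) = c(z-1)^{-k}(1+o(1))$ and $\log\rho_1(z) = -k\log(z-1)+O(1)$. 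Differentiating yields $\rho_1'(z)/\rho_1(z) = -k/(z-1)+O(1)$, hence $(z-1)z\rho_1'(z)/\rho_1(z)\to -k$ as $z\to 1$, which is \eqref{eq:lem_surjective:at_one}. The one-sided blow-ups \eqref{eq:lem_surjective:pole_at_one} follow at once.

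The surjectivity statements reduce to the intermediate value theorem once these boundary values are in place. On $(0,1)$ the continuous map runs from $0$ at $0^+$ to $+\infty$ at $1^-$, and on $(1,\infty)$ from $-\infty$ at $1^+$ to $0$ at $\infty$, so $I_0$ maps onto $\RR\setminus\{0\}$. On a gap $I_\ell$, $\ell\ge 1$, the numerator above is finite and nonzero at the two endpoints while $p_0^{1/2}$ vanishes there, so $|z\rho_1'(z)/\rho_1(z)|\to\infty$ at both endpoints; to conclude surjectivity onto $\RR$ I need the two infinite limits to have opposite signs. This is the step I expect to require most care. To establish it, I would use that on $\overline{I_\ell}$ the function $\rho_1$ is real and continuous, with $\rho_1=\pm 1$ at the two endpoints (from the coalescence $\rho_1=\rho_2$ at branch points together with $\rho_1\rho_2=\det\Phi=1$) and $|\rho_1|>1$ in the interior by Lemma \ref{lem:eigenvalues}~(iii). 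Continuity then forces both endpoint values to equal a common $\epsilon\in\{\pm 1\}$, and the local square-root expansions $\rho_1(z)=\epsilon(1+c_x|z-x|^{1/2}+\ldots)$ at the two endpoints $x$ have $c_x>0$ (because $\epsilon\rho_1\ge 1$ on the gap). A direct differentiation then yields $\rho_1'/\rho_1\to +\infty$ at the left endpoint and $-\infty$ at the right; multiplying by $z<0$ keeps the limits infinite of opposite sign, and IVT finishes.

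The main obstacle is precisely this opposite-sign argument on the gaps: it requires combining the branch-point structure of $\rho_1$ on $\mathcal R$ with the global inequality $|\rho_1|>1$ from Lemma \ref{lem:eigenvalues}~(iii) to pin down that $\rho_1$ takes the same value $\epsilon\in\{\pm 1\}$ at the two ends of $\overline{I_\ell}$ and that the half-integer local expansions bulge in the same direction. Everything else is reading off from the explicit formula of Lemma \ref{lem:logarithmic_derivative}.
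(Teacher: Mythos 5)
Two of your intermediate claims are false, though the conclusions they support are correct. The claim ``$\rho_1'/\rho_1$ is bounded at $z=0$ and $z=\infty$'' fails because both points are branch points of $\rho_1$ ($x_0=0$ is a simple zero of $p_0$, and $\deg p_0=2k'-1$ is odd), so $\rho_1'/\rho_1$ actually blows up like $z^{-1/2}$ at $0$ and decays like $z^{-3/2}$ at $\infty$. You already have what you need in \eqref{eq:lem_logarithmic_derivative:log_derivative}: the numerator is a polynomial of degree $k'-1$, while the denominator $(z-1)p_0(z)^{1/2}$ vanishes like $\sqrt{z}$ at $0$ and grows like $z^{k'+\frac12}$ at $\infty$, so multiplying by $z$ still yields $0$ in both limits. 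Likewise, $(z-1)^k\Phi(z)$ is \emph{not} invertible at $z=1$: its determinant is $(z-1)^{2k}$, and the eigenvalue $(z-1)^k\rho_2(z)$ tends to $0$. The pole of $\rho_1$ at $1$ is indeed of order $k$ with nonzero leading coefficient, but this comes from $(z-1)^k\Tr\Phi(z)\to p(1)^{1/2}\neq 0$ (which forces $(z-1)^k\rho_1(z)\to p(1)^{1/2}$), not from invertibility; alternatively, \eqref{eq:lem_surjective:at_one} is read off directly from \eqref{eq:lem_logarithmic_derivative:log_derivative} and \eqref{eq:lem_logarithmic_derivative:derivative_trace}, as the paper does.

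The more serious gap is on $I_0$: you never verify that $z\rho_1'(z)/\rho_1(z)\neq 0$ for $z\in I_0$, which is part of the lemma (the map must land in $\RR\setminus\{0\}$). Your IVT argument shows the image \emph{contains} $\RR\setminus\{0\}$, but says nothing about whether the degree-$(k'-1)$ numerator in \eqref{eq:lem_logarithmic_derivative:log_derivative} vanishes somewhere on $I_0$. The paper closes this by a count: one shows that $\rho_1'/\rho_1$ has at least one zero on each $I_\ell$, $\ell\ge 1$, and since the numerator has degree exactly $k'-1$, these $k'-1$ zeros are all of them, so none lie on $I_0$. Your opposite-sign argument on $I_\ell$ in fact produces at least one zero per gap by IVT, so the ingredients are present — you simply never make the count. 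On the positive side, your route for the gaps $I_\ell$ — determining the signs of the two infinite endpoint limits directly from the local $\sqrt{\,\cdot\,}$-expansion of $\rho_1$ together with $|\rho_1|>1$ on $I_\ell$ (Lemma \ref{lem:eigenvalues}) — is a valid and arguably more direct alternative to the paper's zero-counting argument; just make sure to get $c_x\neq 0$ (hence $c_x>0$, not merely $\ge 0$) from the blowup of $\rho_1'/\rho_1$ that you already extracted from the explicit formula.
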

\begin{proof}
 We show that for each $ \ell=1,\dots,k'-1 $ the function $ \rho_1'/\rho_1 $ is surjective when viewed as a function from $ I_\ell $ to $ \RR $. It then follows, since $ 0 \not\in \overline I_\ell $ and since $ \rho_1'/\rho_1 $ is continuous, that the function $ I_\ell\ni z\mapsto \frac{z\rho_1'(z)}{\rho_1(z)} \in \RR $ is surjective. 
 
 A proof as the proof of Lemma \ref{lem:zeros_on_loops} shows that $ \rho $ has at least two critical points in each $ C^+_\ell $ for $ \ell=1,\dots,k'-1 $. The critical points are not at the points $ y_j $ since $ \rho_{x_j}'(0) = \frac{q(x_\ell)}{2(x_j-1)^k} \neq 0 $. By \eqref{eq:lem_logarithmic_derivative:quotient} there is at least one zero of $ \frac{\rho_1'(z)}{\rho_1(z)} $ in $ I_\ell $. However, since the numerator of the right hand side of \eqref{eq:lem_logarithmic_derivative:log_derivative} is a polynomial of degree $ k'-1 $, the function $ \frac{\rho_1'(z)}{\rho_1(z)} $ has at most one zero and hence exactly one zero, counting with multiplicity, in each $ I_\ell $. 
 
 By Lemma \ref{lem:logarithmic_derivative} it follows that
 \begin{equation}
  \left|\frac{\rho_1'(z)}{\rho_1(z)}\right| \to \infty,
 \end{equation}
 when $ z \to x_{2\ell} $ or $ z \to x_{2\ell-1} $. Since $ \frac{\rho_1'(z)}{\rho_1(z)} \in \RR $, is continuous and has exactly one root in $ I_\ell $, we conclude that the function $ I_\ell\ni z\mapsto \frac{\rho_1'(z)}{\rho_1(z)} \in \RR $ is surjective.

 We go on with the behavior on $ I_0 $. We have seen that all zeros of $ \frac{\rho_1'(z)}{\rho_1(z)} $ are located in $ I_\ell $ for $ \ell=1,\dots,k'-1 $, so the function $ z\mapsto \frac{z\rho_1'(z)}{\rho_1(z)} $ from $ I_0 $ to $ \RR\backslash \{0\} $ is well defined. It follows from \eqref{eq:sec_model:def_p}, \eqref{eq:lem_logarithmic_derivative:log_derivative} and \eqref{eq:lem_logarithmic_derivative:derivative_trace} that
 \begin{equation}
  (z-1)\frac{z\rho_1'(z)}{\rho_1(z)}|_{z=1} = -k\frac{(z-1)^k\Tr \Phi(z)|_{z=1}}{q(1)p_0(1)^\frac{1}{2}} = -k,
 \end{equation}
 so \eqref{eq:lem_surjective:pole_at_one} holds. Since $ p_0 $ has a simple zero at $ z=0 $ and is of degree $ 2k'-1 $, \eqref{eq:lem_surjective:limit_points} follows from Lemma \ref{lem:logarithmic_derivative}. By continuity the map $ I_0 \ni z \mapsto \frac{z\rho_1'(z)}{\rho_1(z)} \in \RR\backslash\{0\} $ is surjective.
\end{proof}

\begin{lemma}\label{lem::technical}
 For all 
 \begin{enumerate}[(i)]
  \item $ z \in \CC \backslash \left(\bigcup_{\ell=0}^{k'-1}\overline{I_\ell}\right) $ it holds that
 \begin{equation}
  \im \left(\frac{z\rho_1'(z)}{\rho_1(z)}\right) \neq 0,
 \end{equation} \label{eq:lem_technical:0}
  \item $ z \in I_\ell $, $ \ell=0,\dots, k'-1 $, it holds that
  \begin{equation}
   \frac{\d}{\d z}\left(\frac{z\rho_1'(z)}{\rho_1(z)}\right) \neq 0,
  \end{equation} \label{eq:lem_technical:1}
  \item $ z \in I_0 $ it holds that
  \begin{equation}
   \frac{\d}{\d z}\left(\frac{z\rho_1'(z)}{\rho_1(z)}+\frac{k}{z-1}\right) > 0,
  \end{equation}\label{eq:lem_technical:2}
  \item $ z \in I_0 $ it holds that
  \begin{equation}
   \frac{\d}{\d z}\left((z-1)\frac{\rho_1'(z)}{\rho_1(z)}\right) \neq 0,
  \end{equation}\label{eq:lem_technical:3}
  \item $ z \in I_0 $ it holds that
  \begin{equation}
   \frac{\d }{\d z}\left((z-1)\frac{z\rho_1'(z)}{\rho_1(z)}\right) \neq 0.
  \end{equation}\label{eq:lem_technical:4}
 \end{enumerate}
\end{lemma}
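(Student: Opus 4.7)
The plan is to handle all five parts using the explicit formula
\[
h(z) := z\frac{\rho_1'(z)}{\rho_1(z)} = \frac{z\,r(z)}{(z-1)\,p_0(z)^{1/2}}
\]
from Lemma \ref{lem:logarithmic_derivative}, where $r := q^{-1}(z-1)^{k+1}\Tr\Phi'$ is a real polynomial of degree $k'-1$, combined with the surjectivity of $h$ on each $I_\ell$ from Lemma \ref{lem:surjective}. The central idea is a counting argument: a real polynomial naturally associated with each claim has enough real roots (forced by surjectivity) that a degree-parity consideration prevents any complex roots.

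Part (i) is the cleanest instance. For $a \in \mathbb R \setminus \{0\}$, squaring $h(z)=a$ yields the real polynomial equation
\[
R_a(z) := z^2 r(z)^2 - a^2 (z-1)^2 p_0(z) = 0,
\]
of degree $2k'+1$ (the leading term coming from the second summand since $\deg p_0 = 2k'-1$). By Lemma \ref{lem:surjective}, each of $h = a$ and $h = -a$ has at least one real preimage in every interval $I_\ell$ with $\ell \geq 1$ and at least one in one of the two components of $I_0$, producing at least $2k'$ real roots of $R_a$. Since $R_a$ has real coefficients and $2k'+1$ is odd, the remaining root must also be real; hence $h$ takes no real value outside $\bigcup \overline{I_\ell}$, which proves (i) for $a \neq 0$. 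The case $a = 0$ is immediate from $h(z) = zr(z)/[(z-1)p_0(z)^{1/2}]$, whose zero set is real.

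For parts (ii)--(v), the same philosophy applies to derivatives of $h$ and its modifications. In (ii), clearing denominators in $h'(z) = 0$ produces a real polynomial equation whose degree and real-root count can be bounded in the same way; the key input is that on each $I_\ell$ with $\ell \geq 1$, $h$ is real-valued with $h \to \pm\infty$ at the endpoints and exactly one simple zero (at the unique root of $r$ in $I_\ell$, which is not a critical point of $h$ since $r$ has simple zeros there), and this forces monotonicity. For (iii), observe that $\hat h(z) := h(z) + k/(z-1)$ extends continuously across $z=1$ by \eqref{eq:lem_surjective:at_one} with boundary values $\hat h(0) = -k$ and $\hat h(\infty) = 0$; applying the same counting argument to $\hat h$, which is meromorphic of one lower degree than $h$ on $\mathcal R$, shows $\hat h'$ has no zero on $I_0$, and the boundary values then force $\hat h' > 0$. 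For (iv) and (v), the quantities $(z-1)\rho_1'/\rho_1$ and $(z-1)h(z)$ are meromorphic on $\mathcal R$ with differently-placed poles, and the argument is entirely analogous after tracking the correct degree of the associated polynomial.

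The hard part will be making the counting tight enough in (ii)--(v) to exclude critical points of the relevant functions on the specified intervals. Particular care is needed to handle the branch point $z=0$, the cancellation at $z=1$, and the point at infinity, where $h$ and its modifications have prescribed boundary behavior that must be fully accounted for; in each case, the aim is to arrange the algebraic identity so that the degree of the associated real polynomial matches exactly the number of real roots that surjectivity on the intervals already supplies, leaving no room for a spurious critical point.
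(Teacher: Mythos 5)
Your part (i) is essentially the paper's argument. The paper works with the degree-$2k'$ polynomial obtained after first dividing out the factor $z$ (note $p_0(0)=0$, so your $R_a$ always has $z=0$ as a root); your parity observation is a correct substitute for that division, and the root count from Lemma~\ref{lem:surjective} matches what the paper uses. Your outlines for (iii)--(v) also match the paper's approach: in each case one squares the relevant equality to get a real polynomial, uses surjectivity on $I_\ell$, $\ell\geq 1$, to account for $2(k'-1)$ roots, and then shows the remaining degree leaves no room for a double root in $I_0$; the boundary values $\hat h(0^+)=-k$, $\hat h(\infty)=0$ then upgrade (iii) to strict positivity. As you say, the care is in the degree bookkeeping and in the analysis near $0$, $1$, $\infty$, which the paper carries out via the refined count that $\hat h = t$ has at most one solution on $I_0$ when $t\notin(-k,0]$.

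The genuine gap is part (ii). The inference ``$h\to\pm\infty$ at the endpoints and exactly one simple zero $\Rightarrow$ monotonicity'' is false: a continuous function on an interval can tend to $+\infty$ and $-\infty$ at opposite endpoints, vanish exactly once, and still have interior local extrema (a cubic-type profile does this). Even granting monotonicity, that would only give $h'\geq 0$ or $h'\leq 0$, not $h'\neq 0$. You also do not address $I_0$, where $h$ has \emph{no} zero at all, so the hypothesis you invoke is absent there. The paper's proof of (ii) is in fact a different and much shorter argument: if $h'(z_0)=0$ for some $z_0\in I_\ell$, then since $h$ is real on $I_\ell$ the level set $\{\im h=0\}$ has at least two branches leaving the real axis at $z_0$, and this contradicts part (i). A counting argument in the spirit of your (i) can also be made to work --- a critical point of $h$ at $z_0$ with value $t_0\neq 0$ gives the degree-$2k'$ polynomial a root of multiplicity $\geq 2$ at $z_0$ on top of the $2k'-1$ or more roots already supplied by surjectivity, overflowing the degree, while $t_0=0$ is excluded because $r$ has only simple zeros in $\bigcup_{\ell\geq 1}I_\ell$ --- but that is not the argument you wrote, and the step ``this forces monotonicity'' as stated does not close the proof.
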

\begin{proof}
 The proofs below follow the same strategy, namely locating all possible roots, by using Lemma \ref{lem:surjective}, of an appropriate polynomial equation. 
 \begin{enumerate}[(i)]
  \item 
  Assume that
 \begin{equation}\label{eq:lem_technical:imaginary_part}
  \im \left(\frac{z\rho_1'(z)}{\rho_1(z)}\right) = 0,
 \end{equation}
 for some $ z \in \CC \backslash \left(\bigcup_{\ell=0}^{k-1}\overline{I_\ell}\right) $, that is $ \frac{z\rho_1'(z)}{\rho_1(z)} = t $ for some $ t \in \RR $. By \eqref{eq:lem_logarithmic_derivative:log_derivative} the above assumption implies that
 \begin{equation}\label{eq:lem_technical_1:polynomial}
  z\left(\frac{(z-1)^{k+1}\Tr \Phi'(z)}{q(z)}\right)^2-t^2(z-1)^2\frac{p_0(z)}{z} = 0,
 \end{equation}
 where the left hand side is a polynomial of degree $ 2k' $ if $ t\neq 0 $ and of degree $ 2k'-1 $ if $ t=0 $.  We show that all roots of the polynomial lie in $ I_\ell $, $ \ell=0,\dots,k'-1 $, which tells us that no zero can come from \eqref{eq:lem_technical:imaginary_part}. If $ t\neq 0 $ then the above polynomial has two roots in each $ I_\ell $, $ \ell=0,\dots,k'-1 $ coming from $ \frac{z\rho_1'(z)}{\rho_1(z)} = \pm t $, by Lemma \ref{lem:surjective}. If $ t=0 $ then there is a double root in each $ I_\ell $, $ \ell=1,\dots,k'-1 $ of \eqref{eq:lem_technical_1:polynomial} coming from $ \frac{z\rho_1'(z)}{\rho_1(z)} = 0 $ and one root $ z=0 $.
 
  \item Assume there is a $ z \in I_\ell $ such that
  \begin{equation}
   \frac{\d}{\d z}\left(\frac{z\rho_1'(z)}{\rho_1(z)}\right) = 0.
  \end{equation}
  Then there is a level curve satisfying
  \begin{equation}
   \im \left(\frac{z\rho_1'(z)}{\rho_1(z)}\right)=0,
  \end{equation}
  leaving the real line, which contradicts \eqref{eq:lem_technical:0}.

  \item Let $ t \in \RR $ and consider the equation
  \begin{equation}\label{eq:lem_technical_2:equation}
   \frac{z\rho_1'(z)}{\rho_1(z)}+\frac{k}{z-1} = t.
  \end{equation}
  The above equality implies that $
   \left(\frac{z\rho_1'(z)}{\rho_1(z)}\right)^2 = \left(t-\frac{k}{z-1}\right)^2,
  $ which by \eqref{eq:lem_logarithmic_derivative:log_derivative} is the same as solving the polynomial equation
  \begin{equation}\label{eq:lem_technical_2:polynomial}
   z\left(\frac{(z-1)^{k+1}\Tr \Phi'(z)}{q(z)}\right)^2 - \frac{p_0(z)}{z}(tz-(k+t))^2=0.
  \end{equation}
  The polynomial on the left hand side is of degree $ 2k' $ if $ t \neq 0 $ and $ 2k'-1 $ if $ t=0 $. By Lemma \ref{lem:surjective} there are two solutions on each interval $ I_\ell $ for $ \ell=1,\dots,k'-1 $, one coming from \eqref{eq:lem_technical_2:equation} and the other coming from \eqref{eq:lem_technical_2:equation} with $ \frac{z\rho_1'(z)}{\rho_1(z)} $ interchanged with $ -\frac{z\rho_1(z)}{\rho_1(z)} $. If these two solutions happen to coincide, the left most term in \eqref{eq:lem_technical_2:polynomial} is zero and the order of the zero is two, and since $ p_0(z)\neq 0 $ it implies that $ tz-(k+t)=0 $, and hence it is a double root of \eqref{eq:lem_technical_2:polynomial}. Hence there can at most be two solutions of \eqref{eq:lem_technical_2:equation} on the positive part of the real line. In fact, there can at most be one solution if $ t \not\in (-k,0] $. Namely, by \eqref{eq:lem_surjective:limit_points} and \eqref{eq:lem_surjective:pole_at_one}, 
  \begin{equation}\label{eq:lem_technical_2:equation_2}
   -\frac{z\rho_1'(z)}{\rho_1(z)}+\frac{k}{z-1} = t
  \end{equation}
  has a solution in $ [0,1) $ if $ t \in (-\infty,-k] $ and a solution in $ (1,\infty) $ if $ t \in (0,\infty) $. By Lemma \ref{lem:surjective} the equation $ \frac{z\rho_1'(z)}{\rho_1(z)} = 0 $ has no solution in $ I_0 $. So a solution to \eqref{eq:lem_technical_2:equation_2} can not be a solution for \eqref{eq:lem_technical_2:equation}. So there is at most one solution left for \eqref{eq:lem_technical_2:equation}.

  To summarize, the equation \eqref{eq:lem_technical_2:equation} has at most one respectively two solutions in $ I_0 $ if $ t \in \RR\backslash (-k,0] $ respectively $ t \in (-k,0] $.
    
  Now, by \eqref{eq:lem_surjective:at_one}, the function
  \begin{equation}\label{eq:lem_technical:injective_function}
   \frac{z\rho_1'(z)}{\rho_1(z)}+\frac{k}{z-1},
  \end{equation}
  is analytic in a neighborhood of $ (0,\infty) $, and by \eqref{eq:lem_surjective:limit_points}
  \begin{equation}
   \lim_{z\to0}\frac{z\rho_1'(z)}{\rho_1(z)}+\frac{k}{z-1} = -k \text{ and } \lim_{z\to \infty} \frac{z\rho_1'(z)}{\rho_1(z)}+\frac{k}{z-1} = 0.
  \end{equation}
  Moreover \eqref{eq:lem_technical:injective_function} is real in $ I_0 $. Hence, by the number of solutions of \eqref{eq:lem_technical_2:equation} in $ I_0 $ which is counted with multiplicity, we conclude that \eqref{eq:lem_technical:injective_function} is an increasing function with no critical point in $ I_0 $. That is \eqref{eq:lem_technical:2} holds.
  \item Assume for a contradiction that \eqref{eq:lem_technical:3} does not hold. Then there is a $ t \in \RR $ such that the equation
  \begin{equation}\label{eq:lem_technical_3:equation}
   (z-1)\frac{\rho_1'(z)}{\rho_1(z)} = t,
  \end{equation}
  has a solution of order two in $ I_0 $. By Lemma \ref{lem:surjective}, $ t\neq 0 $. It implies that the polynomial equation
  \begin{equation}
   \left(\frac{(z-1)^{k+1}\Tr \Phi'(z)}{q(z)}\right)^2-t^2p_0(z) = 0,
  \end{equation}
  has a solution of order two in $ I_0 $. The polynomial equation has $ 2k'-1 $ solutions, counting with multiplicity. There are two solutions in each $ I_\ell $, $ \ell=1,\dots,2(k'-1) $, since \eqref{eq:lem_technical_3:equation} has one solution in $ I_\ell $ and the same equation with $ t $ interchanged with $ -t $ has one different solution in $ I_\ell $, by Lemma \ref{lem:surjective}. Hence there are not enough possible solutions of the polynomial equation to have a solution of order two of \eqref{eq:lem_technical_3:equation} in $ I_0 $, which proves the statement. 
  \item This proof is almost identical to the previous one. Assume there is a $ t \in \RR $ such that
  \begin{equation}
   (z-1)\frac{z\rho_1'(z)}{\rho_1(z)} = t,
  \end{equation}
   has a solution of order two in $ I_0 $, and therefore
  \begin{equation}
   z\left(\frac{(z-1)^{k+1}\Tr \Phi'(z)}{q(z)}\right)^2-t^2\frac{p_0(z)}{z} = 0,
  \end{equation}
  has a solution of order two in $ I_0 $. The polynomial equation has $ 2k'-1 $ solutions. As before we know there are $ 2(k'-1) $ solutions in $ \cup_{\ell=1}^{k'-1}I_\ell $. Hence there can not be a solution of order two in $ I_0 $. 
 \end{enumerate}
\end{proof}

With the above lemmas we give the proofs of Proposition \ref{prop:boundary}, \ref{prop:L_map} and \ref{prop:boundary_is_boundary}.

\begin{proof}[Proof of Proposition \ref{prop:boundary}]
First, let $ (\chi,\eta) \in \mathcal E_{RS} $, $ \ell=1,\dots,k'-1 $ then, by \eqref{eq:polynomial_of_critical_points}, $ F $ has a critical point of order (at least) two at $ x_i $, $i=2\ell,2\ell-1 $, if and only if $ (\chi,\eta)=\left(0,\frac{x_i+1}{x_i-1}\right) $.

If $ (\chi,\eta) \in \mathcal E_{FR}\cap(-1,1)^2 $ ($ (\chi,\eta) \in \mathcal E_{RS}^{(\ell)} $, not one of the points mentioned above, for some $ \ell=1,\dots,k'-1 $) then there is a $ z \in I_0 $ ($ z \in I_\ell$) such that
\begin{equation}\label{eq:prop_SL_boundary:positive}
 A(z) 
 \begin{pmatrix}
  \eta \\
  \chi
 \end{pmatrix}
 = B(z),
\end{equation}
or
\begin{equation}\label{eq:prop_SL_boundary:negative}
 A(z) 
 \begin{pmatrix}
  \eta \\
  -\chi
 \end{pmatrix}
 = B(z),
\end{equation}
where
\begin{equation}
 A(z) = 
 \begin{pmatrix}
  \frac{k}{4} & -\frac{1}{2}\frac{z\rho_1'(z)}{\rho_1(z)} \\
  0 & -\frac{\d}{\d z}\left(\frac{z\rho_1'(z)}{\rho_1(z)}\right)
 \end{pmatrix}
\end{equation}
and 
\begin{equation}
 B(z) = 
 \begin{pmatrix}
  \frac{k}{4}\frac{z+1}{(z-1)} \\
  -\frac{k}{(z-1)^2}
 \end{pmatrix}.
\end{equation}
Here we have used \eqref{eq:lem_logarithmic_derivative:quotient}. The above is a linear equation that we solve for $ (\chi,\eta) $ in terms of $ z \in I_0 $ ($z\in I_\ell$). Now, 
\begin{equation}
 \det A(z) = \frac{k}{4} \frac{\d}{\d z} \left(\frac{z\rho_1'(z)}{\rho_1(z)}\right),
\end{equation}
which is non-zero on $ I_0 $ ($ I_\ell $) by Lemma \ref{lem::technical} \eqref{eq:lem_technical:1}. Hence there is a unique solution, not necessary in $ (-1,1)^2 $, to the equations
\begin{equation}
 A(z) 
 \begin{pmatrix}
  \eta \\
  \chi 
 \end{pmatrix}
 = B(z) \text{ and } A(z) 
 \begin{pmatrix}
  \eta \\
  -\chi 
 \end{pmatrix}
 = B(z).
\end{equation}

These solutions are given by the parametrization
\begin{equation}\label{eq:proof_boundary:parametrization_s}
 \chi(z) = \pm\frac{k}{(z-1)^2\frac{\d}{\d z}\left(\frac{z\rho_1'(z)}{\rho_1(z)}\right)}
\end{equation}
and
\begin{multline}\label{eq:proof_boundary:parametrization_eta}
 \eta(z) = \frac{1}{z-1}\left(\pm\frac{2(z-1)}{k}\frac{z\rho_1'(z)}{\rho_1(z)}\chi(z)+z+1\right) \\
 = \frac{1}{z-1}\left(\frac{2\frac{z\rho_1'(z)}{\rho_1(z)}}{(z-1)\frac{\d}{\d z}\left(\frac{z\rho_1'(z)}{\rho_1(z)}\right)}+z+1\right),
\end{multline}
for $ z \in I_0 $ ($ I_\ell $), where the sign depends on if it is a solution to \eqref{eq:prop_SL_boundary:positive} or \eqref{eq:prop_SL_boundary:negative}. This parametrization directly shows the symmetry in the line $ \chi=0 $.

To show that $ \mathcal E_{FR} $ ($\mathcal E_{RS}^{(\ell)}$) is a closed curve we show that the parametrization is continuous also at the endpoints of the intervals, with the limit being the appropriate point in the statement of the proposition, and that $ (\chi(z),\eta(z)) \in (-1,1)^2 $ when $ z\in I_0 $ ($ I_\ell$). We start to show that this is true for $ \mathcal E_{FR} $.

From the above parameterization we get by differentiating \eqref{eq:lem_logarithmic_derivative:log_derivative} that
\begin{equation}
 \lim_{z\to 0}(\chi(z),\eta(z)) = (0,-1) \text{ and } \lim_{z\to\infty}(\chi(z),\eta(z)) = (0,1).
\end{equation}
When $ z \to 1 $ the parametrization is actually still valid, however this requires some computations. Consider the parametrization with the $ + $-sign in front. Using \eqref{eq:sec_model:def_p}  and \eqref{eq:lem_logarithmic_derivative:derivative_trace} we get the following identities
\begin{multline}
 \left.(z-1)^k\Tr \Phi(z)\right|_{z=1}=p(1)^\frac{1}{2},\quad \left.\frac{\d}{\d z}\left((z-1)^k\Tr \Phi(z)\right)\right|_{z=1}=\frac{p'(1)}{2p(1)^\frac{1}{2}}, \\
 \left.(z-1)^{k+1}\Tr \Phi'(z)\right|_{z=1} = -kp(1)^\frac{1}{2}, \\ 
 \left.\frac{\d}{\d z}\left((z-1)^{k+1}\Tr \Phi'(z)\right)\right|_{z=1}=(1-k)\frac{p'(1)}{2p(1)^\frac{1}{2}}.
\end{multline}
These identities together with \eqref{eq:lem_logarithmic_derivative:log_derivative} implies
\begin{equation}
 \left.(z-1)^2\frac{\d}{\d z}\left(\frac{z\rho_1'(z)}{\rho_1(z)}\right)\right|_{z=1} = k,
\end{equation}
\begin{equation}
 \left.\frac{\d }{\d z}\left((z-1)^2\frac{\d}{\d z}\left(\frac{z\rho_1'(z)}{\rho_1(z)}\right)\right)\right|_{z=1} = 0,
\end{equation}
\begin{equation}
 \left.(z-1)\frac{z\rho_1'(z)}{\rho_1(z)}\right|_{z=1}=-k
\end{equation}
and
\begin{equation}
 \left.\frac{\d}{\d z}\left((z-1)\frac{z\rho_1'(z)}{\rho_1(z)}\right)\right|_{z=1} = -k+\frac{p'(1)}{p(1)}.
\end{equation}
Expand $ \chi $ and $ \eta $ around $ z=1 $ into its Taylor series, using these identities. We obtain, for the branch with the $ + $-sign,
\begin{equation}
 \chi(z) = 1 +\Ordo\left((z-1)^2\right),
\end{equation}
and
\begin{equation}
 \eta(z) = \frac{1}{z-1}\left(\frac{2(z-1)}{k}\frac{z\rho_1'(z)}{\rho_1(z)}\chi(z)+z+1\right) = \frac{2}{k}\frac{p'(1)}{p(1)}-1+\Ordo(z-1).
\end{equation}
That is, the parametrization $ z\to (\chi(z),\eta(z)) $ is extended over $ z=1 $ and $ \chi(1) = 1 $ and $ \eta(1) = \frac{2}{k}\frac{p'(1)}{p(1)}-1 $. For the other parametrization we get by symmetry that $ \chi(1) = -1 $ and $ \eta(1) = \frac{2}{k}\frac{p'(1)}{p(1)}-1 $. Hence the curve is a closed curve.

What is left to show is $ z \in I_0 $ implies $ (\chi,\eta) \in (-1,1)^2 $.

That $ \chi \in (-1,1) $, follows if
\begin{equation}
 (z-1)^2\frac{\d}{\d z}\left(\frac{z\rho_1'(z)}{\rho_1(z)}\right) > k 
\end{equation}
for $ z \in I_0 $. Rewrite this inequality to
\begin{equation}
 \frac{\d}{\d z}\left(\frac{z\rho_1'(z)}{\rho_1(z)}+\frac{k}{z-1}\right) > 0,
\end{equation}
which is Lemma \ref{lem::technical} \eqref{eq:lem_technical:2}.

To see that $ \eta \in (-1,1) $ assume for a contradiction that $ \eta(z) = -1 $ for some $ z \in I_0 $. We will later do the same with $ \eta(z) = 1 $. Solving for $ \chi $ in \eqref{eq:proof_boundary:parametrization_eta} implies
\begin{equation}
 \chi(z) = \mp\frac{kz}{(z-1)\frac{z\rho_1'(z)}{\rho_1(z)}}.
\end{equation}
But we already have \eqref{eq:proof_boundary:parametrization_s}, so
\begin{equation}
 \frac{k}{(z-1)^2\frac{\d}{\d z}\left(\frac{z\rho_1'(z)}{\rho_1(z)}\right)} = -\frac{kz}{(z-1)\frac{z\rho_1'(z)}{\rho_1(z)}}.
\end{equation}
This equation is equivalent to
\begin{equation}
 \frac{\d}{\d z}\left((z-1)\frac{\rho_1'(z)}{\rho_1(z)}\right) = 0,
\end{equation}
which is false by Lemma \ref{lem::technical} \eqref{eq:lem_technical:3}.

Similarly, assume that $ \eta(z) = 1 $. Then
\begin{equation}
 \chi(z) = -\frac{k}{(z-1)\frac{z\rho_1'(z)}{\rho_1(z)}},
\end{equation}
which by \eqref{eq:proof_boundary:parametrization_s} is equivalent to
\begin{equation}
 \frac{\d }{\d z}\left((z-1)\frac{z\rho_1'(z)}{\rho_1(z)}\right) = 0,
\end{equation}
which is false by Lemma \ref{lem::technical} \eqref{eq:lem_technical:4}.

Since $ (0,1) $ and $ (0,-1) $ is contained in $ \mathcal E_{FR} $ which is a closed curve we conclude that $ \mathcal E_{FR} $ is contained in $ [-1,1]^2 $.

Note also for further references that $ \mathcal E_{FR} $ is symmetric with respect to the line $ \chi=0 $ and it intersect that line only at $ (0,1) $ and $ (0,-1) $.

We will now continue with $ \mathcal E_{RS} $ for $ \ell=1,\dots,k'-1 $. Let $ z\mapsto (\chi(z),\eta(z)) $ be the parametrization given above. Then
\begin{equation}
 (\chi(z),\eta(z)) \to \left(0,\frac{x_j+1}{x_j-1}\right)
\end{equation}
as $ z \to x_j $, for $ j=2\ell $ and $ 2\ell-1 $. The limit follows by differentiating \eqref{eq:lem_logarithmic_derivative:log_derivative}. This shows that $ (\chi,\eta)(\overline{I_\ell}) $ for $ \ell=1,\dots,k'-1 $ are closed curves. What is left to show is that the curves are contained in $ (-1,1)^2 $. For any $ \ell =1,\dots,k'-1 $ the sets $ \mathcal E_{FR} $ and $ \mathcal E_{RS}^{(\ell)} $ are disjoint. Since $ \mathcal E_{FR} $ is a closed curve and does only intersect the line $ \chi=0 $ at $ (0,\pm 1) $ while $ \mathcal E_{RS}^{(\ell)} $ intersect the point $ (0,\frac{x_j+1}{x_j-1}) $ we conclude that $ \mathcal E_{RS} $ is contained in the interior of $ \mathcal E_{FR} $ and hence in $ (-1,1)^2 $.

That $ \mathcal E_{FR} $ and $ \mathcal E_{RS} $ are simple follows directly by the definition of the curves and Lemma \ref{lem:zeros_on_loops}.
\end{proof}

\begin{proof}[Proof of Proposition \ref{prop:L_map}]
 It follows from Lemma \ref{lem:eigenvalues} \eqref{eq:lem_eigenvalues:conjugate} that if $ (z,w) \in \mathcal R \backslash C^+ $ is a critical point of $ F $, then so is $ (\bar z,\bar w) $. Moreover, for any $ (z,w) \in \mathcal R \backslash C^+ $ the point $(z,w) $ is in $ \mathcal R_{12} $ if and only if $ (\bar z,\bar w) \not\in \mathcal R_{12} $. Hence $ L $ is well defined.

 For $ z \in \CC \backslash \left(\cup_{\ell=0}^{k'-1}\overline{I_\ell}\right) $ let
 \begin{equation}
  A(z) = 
  \begin{pmatrix}
   -\frac{1}{2}\re \left(\frac{\rho_i'(z)z}{\rho_i(z)}\right) & \frac{k}{4} \\
   -\frac{1}{2}\im \left(\frac{\rho_i'(z)z}{\rho_i(z)}\right) & 0
  \end{pmatrix}
 \end{equation}
 with $ i=1 $ if $ \im z\geq 0 $, $ i=2 $ if $ \im z<0 $, note that $ A $ is continuous over $ \im z=0 $, and let
 \begin{equation}
  B(z) = 
  \begin{pmatrix}
   \frac{k}{4}\re \left(\frac{z+1}{z-1}\right)\\
   \frac{k}{4}\im \left(\frac{z+1}{z-1}\right)
  \end{pmatrix}.
 \end{equation}
 
 If $ (z,w) \in \mathcal R_{12} $, then $ L(\chi,\eta)=(z,w) $ for some $ (\chi,\eta) \in \mathcal G_R $ if and only if
 \begin{equation}\label{eq:prop_L_map:eq1}
  A(z)
  \begin{pmatrix}
   \chi \\
   \eta
  \end{pmatrix}
  = B(z)
 \end{equation} 
 has a solution $ (\chi,\eta) \in (-1,1)^2 $. This follows by the definition of $ L $ and by taking the real and imaginary part of the equation $ zF_i'(z;\chi,\eta)=0 $. Lemma \ref{lem::technical} \eqref{eq:lem_technical:0} together with \eqref{eq:lem_logarithmic_derivative:quotient} implies
 \begin{equation}
  \det A(z) = \frac{k}{8}\im \left(\frac{\rho_i'(z)z}{\rho_i(z)}\right) \neq 0,
 \end{equation}
 Which proves that $ L $ is injective. Moreover it proves that the map $ z \mapsto A(z)^{-1}B(z) $ is smooth on  $ \CC \backslash \left(\cup_{\ell=0}^{k'-1}\overline{I_\ell}\right) $.
 
 To prove surjectivity we prove that $ A(z)^{-1}B(z) \in (-1,1)^2 $ when $ (z,w) \in \mathcal R_{12} $. If $ z \in \RR \backslash \left(\cup_{\ell=0}^{k'-1}\overline{I_\ell}\right) $ the solution to \eqref{eq:prop_L_map:eq1} becomes
 \begin{equation}
  \begin{pmatrix}
   \chi \\
   \eta
  \end{pmatrix}
  =
  \begin{pmatrix}
   0 \\
   \frac{z+1}{z-1}
  \end{pmatrix}
  \in (-1,1)^2,
 \end{equation}
 and in particular the solution is in the interior of the closed curve $ \mathcal E_{FR} $. Due to smoothness of $ z\mapsto A(z)^{-1}B(z) $ on $ \CC \backslash \left(\cup_{\ell=0}^{k'-1}\overline{I_\ell}\right) $ and the definition of $ \mathcal E_{FR} $, the solution of \eqref{eq:prop_L_map:eq1} can not leave the interior of $ \mathcal E_{FR} \subset [-1,1]^2 $.
 
 In local coordinates $ L^{-1} $ is given by the map $ z\mapsto A(z)^{-1}B(z) $ which we have seen is smooth and in particular differentiable. Now, for $ z \in \CC \backslash \left(\cup_{\ell=0}^{k'-1}\overline{I_\ell}\right) $ let $ f(z;\chi,\eta) = F_1'(z;\chi,\eta) $ if $ \im z\geq 0 $ and let $ f(z;\chi,\eta) = F_2'(z;\chi,\eta) $ if $ \im z<0 $. If $ L(\chi,\eta) = (z,w) $, then $ f(z;\chi,\eta)=0 $ and the zero is simple, that is $ f'(z;\chi,\eta) \neq 0 $. By the implicit function theorem the map $ (\chi,\eta) \mapsto z $ is differentiable, that is, $ L $ is differentiable.
\end{proof}

\begin{proof}[Proof of Proposition \ref{prop:boundary_is_boundary}]
 By Proposition \ref{prop:L_map} the map $ L $ is a diffeomorphism, so $ (\chi',\eta') \to \partial \mathcal G_R $ if and only if $ L(\chi',\eta') \to C^+ $.
 
 If $ (\chi,\eta) \in \partial \mathcal G_R\cap (-1,1)^2 $ then by Lemma \ref{lem:projected_critical_points} and Lemma \ref{lem:eigenvalues} \eqref{eq:lem_eigenvalues:conjugate} there is a critical point of order two on $ C^+ $, that is $ (\chi,\eta) \in \mathcal E_{FR} \cup_{\ell=1}^{k'-1} \mathcal E_{RS}^{(\ell)} $.
 
 Conversely, if $ (\chi,\eta) \in \left(\mathcal E_{FR} \cup_{\ell=1}^{k'-1} \mathcal E_{RS}^{(\ell)}\right) \cap (-1,1)^2 $ then there is a critical point $ (z,w) \in C^+ $ of (at least) order two corresponding to $ (\chi,\eta) $. By taking a limit in $ \mathcal R_{12} $,
 \begin{equation}
  \lim_{\mathcal R_{12} \ni (\tilde z,\tilde w) \to(z,w)} L^{-1}(\tilde z,\tilde w) = (\chi',\eta'),
 \end{equation}  
  we get, since $ L $ is a diffeomorphism, that $ (\chi',\eta') \in \partial \mathcal G_R $ and $ (z,w) $ is a critical point of (at least) order two of $ F(\cdot;\chi',\eta') $. We saw however in the proof of Proposition \ref{prop:boundary} that this implies that $ (\chi,\eta) = (\chi',\eta') $. 

For the last two statements, let $ (\chi,\eta) \in (-1,1)^2 $ and let $ (z_1,w_1) $ and $ (z_2,w_2) $ be the two last critical points from Lemma \ref{lem:critical_points} and Lemma \ref{lem:zeros_on_loops}. If $ (z_1,w_1) \in \mathcal R \backslash C^+ $ then, as seen before, $ (z_2,w_2) = (\bar z_1,\bar w_1) $. It follows that the only way $ (\chi,\eta) $ is not located in one of the sets in Definition \ref{def:phases} is if $ (z_1,w_1) $ and $ (z_2,w_2) $ belongs to $ C_{\ell_1}^+ $ respectively $ C_{\ell_2}^+ $ where $ \ell_1 \neq \ell_2 $. However that would contradict the continuity in equation \eqref{eq:polynomial_of_critical_points}. It is now clear, again by continuity of equation \eqref{eq:polynomial_of_critical_points}, that $ \mathcal E_{FR} $ and $ \mathcal E_{RS}^{(\ell)} $, $ \ell=1,\dots,k'-1 $, separates $ \mathcal G_R $ and $ \mathcal G_F $ respectively $ \mathcal G_R $ and $ \mathcal G_S $.
 
\end{proof}
  
 \section{Proof of Theorems \ref{thm:local_smooth} and \ref{thm:local_rough}}\label{sec:proof_local_correlations}
 In this section we prove Theorems \ref{thm:local_smooth} and \ref{thm:local_rough}.
 
 For further references, let $ j_+ \in \{1,2\} $ be such that
 \begin{equation}\label{eq:sec_smooth_rough:j}
  -\frac{\chi}{2}\log|\rho_{j_+}(z)| \geq 0
 \end{equation}
 for all $ z \in \CC $ and let $ j_-\in \{1,2\}\backslash\{j_+\} $. That is, let $ j_+ = 1 $ and $ j_- = 2 $ if $ \chi\leq 0 $ and let $ j_+=2 $ and $ j_- =1 $ if $ \chi>0 $, see Lemma \ref{lem:eigenvalues} \eqref{eq:lem_eigenvalues:at_cuts} and \eqref{eq:lem_eigenvalues:inequality}.

 \subsection{Proof of Theorem \ref{thm:local_smooth}}
 The proof is done by a steepest descent analysis. We first construct the curves we will integrate over.
 
 Let $ \mathcal G_\ell $ be a connected component of $ \mathcal G_S $ and more precisely, be the connected component with boundary $ \mathcal E_{RS}^{(\ell)} $, see Proposition \ref{prop:boundary} and \ref{prop:boundary_is_boundary}. If $ (\chi,\eta) \in \mathcal G_\ell $ then, by definition of $ \mathcal G_S $ and Proposition \ref{prop:boundary}, $ C_\ell^+ $ contains four distinct critical points of $ F $, we denote the projection of them to $ \CC $ as $ z_i(\chi,\eta) \in [x_{2\ell},x_{2\ell-1}] $ for $ i=1,2,3,4 $. Note that the critical points are continuous with respect to $ (\chi,\eta) $ and distinguishable as points on $ \mathcal R $, since $ F $ has no double critical point when $ (\chi,\eta) \in \mathcal G_S $. Fix a point $ (0,\eta_0) \in \mathcal G_\ell $. Since
\begin{equation}\label{eq:sec_smooth:derivative_middle_line}
 F_1'(z;0,\eta_0) = F_2'(z;0,\eta_0)=\frac{k(\eta_0-1)}{4z(z-1)}\left(z-\frac{\eta_0+1}{\eta_0-1}\right),
\end{equation}
two of the $ z_i(0,\eta_0) $ are given by $ \frac{\eta_0+1}{\eta_0-1} $, one is $ x_{2\ell} $ and one is $ x_{2\ell-1} $. It is the first two that are of most importance for us. Fix $ z_1(\chi,\eta) $ and $ z_2(\chi,\eta) $ by require that $ F_i'(z_i(\chi,\eta);\chi,\eta)=0 $ in a neighborhood of $ (0,\eta_0) $ for $ i =1,2 $. By continuity of the critical points of $ F $ this determines $ z_1(\chi,\eta) $ and $ z_2(\chi,\eta) $ on all of $ \mathcal G_\ell $.

 \begin{lemma}\label{lem:smooth:curves}
  Let $ (\chi,\eta) \in \mathcal G_\ell $. We can create $ \Gamma_{\chi,\eta} $, a union of curves going around each interval $ (x_{2j+1},x_{2j}) $, $ j=0,\dots,k'-1 $ where $ x_{2k'-1}=-\infty $, ones, and $ \gamma_{\chi,\eta} $, a loop which intersects $ z_{j_-}(\chi,\eta) $ and has $ 0 $, $ 1 $ and a part of $ \Gamma_{\chi,\eta} $ in its interior, such that
   \begin{equation}\label{eq:lem_smooth:inequality_1}
    \re F_{j_-}(w;\chi,\eta)-\re F_{j_-}(z;\chi,\eta) \leq -\eps < 0,
   \end{equation}
   and
   \begin{equation}\label{eq:lem_smooth:inequality_2}
    \re F_{j_-}(w;\chi,\eta)-\re F_{j_+}(z;\chi,\eta) \leq -\eps < 0,
   \end{equation}
   for $ z \in \gamma_{\chi,\eta} $ and $ w \in \Gamma_{\chi,\eta} $.
 \end{lemma}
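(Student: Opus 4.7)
The plan is a steepest-descent construction around the saddle $z_*:=z_{j_-}(\chi,\eta)\in I_\ell$. Since $(\chi,\eta)$ lies in the open component $\mathcal G_\ell$, the four critical points of $F$ projecting to $C_\ell^+$ are distinct, and in particular $z_*$ is a simple zero of $F_{j_-}'$. Locally the level curve $\{\im F_{j_-}=\im F_{j_-}(z_*)\}$ is the standard saddle cross, with one arc tangent to $\RR$ and one transverse; along the transverse arc $\re F_{j_-}$ strictly exceeds $\re F_{j_-}(z_*)$ on one side of $z_*$ and is strictly below it on the other, and by the conjugation symmetry $F_{j_-}(\bar z)=\overline{F_{j_-}(z)}$ from Lemma~\ref{lem:eigenvalues}(iv) the upper and lower halves of the transverse branch are complex conjugate.

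For $\gamma_{\chi,\eta}$ I would take the closure of the transverse ascent branch. Since all $2k'$ critical points of $F$ are accounted for on $C^+$ by Lemmas~\ref{lem:critical_points} and~\ref{lem:zeros_on_loops}, there is no critical point off the real axis in the smooth phase, and $F_{j_-}$ has only logarithmic singularities at $\{0,1,\infty\}$; hence the trajectory emanating from $z_*$ cannot terminate in the open upper half-plane and must return to $\RR$. Joined with its conjugate this produces a closed loop. A topological count using the behavior of $\re F_{j_-}$ at $\{0,1,\infty\}$, the sign alternation of $p_0^{1/2}$ across the cuts, and the bound on real zeros of $F_{j_-}'$ coming from Lemma~\ref{lem:logarithmic_derivative}, forces the resulting loop to enclose exactly $\{0,1\}$ together with the cuts $[x_{2j+1},x_{2j}]$ for $j<\ell$, and to exclude the cuts with $j\geq\ell$.

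For $\Gamma_{\chi,\eta}$ I would take, around each cut $[x_{2j+1},x_{2j}]$ (with $x_{2k'-1}=-\infty$ interpreted as a large loop on the negative axis), a thin curve hugging the cut. By Lemma~\ref{lem:eigenvalues}(ii), $|\rho_{j_-}|=1$ on each cut, so $\re F_{j_-}$ on the cuts reduces to $\frac{k}{4}(\eta+1)\log|z|-\frac{k}{2}\log|z-1|$, a smooth function bounded on compact pieces and controlled at infinity. Because $\gamma_{\chi,\eta}$ is by construction the ascent branch, $\re F_{j_-}$ attains its minimum on $\gamma_{\chi,\eta}$ at the saddle value $\re F_{j_-}(z_*)$; a maximum-principle comparison for the harmonic function $\re F_{j_-}$ in the multiply connected domain between $\gamma_{\chi,\eta}$ and the cuts yields strict separation, and tightening $\Gamma_{\chi,\eta}$ around the cuts then gives \eqref{eq:lem_smooth:inequality_1} with uniform $\varepsilon>0$. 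The inequality \eqref{eq:lem_smooth:inequality_2} is immediate: using $\rho_{j_+}\rho_{j_-}=1$ one has $\re(F_{j_+}-F_{j_-})=-\chi\log|\rho_{j_+}|\geq 0$ by the defining property \eqref{eq:sec_smooth_rough:j} of $j_+$, so $\re F_{j_+}(z)\geq \re F_{j_-}(z)$ on $\gamma_{\chi,\eta}$ and the first bound implies the second.

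The main obstacle is the global topology of the transverse ascent level curve of $\im F_{j_-}$ emanating from $z_*$: one must certify that it re-meets $\RR$ in precisely the right gap, so that $\gamma_{\chi,\eta}$ has the advertised interior. I would handle this by continuity, starting from the reference point $(\chi,\eta)=(0,\eta_0)\in\mathcal G_\ell$, where \eqref{eq:sec_smooth:derivative_middle_line} makes the level curves explicit, and then extending to the whole component by deformation: the topological type of $\gamma_{\chi,\eta}$ cannot change inside $\mathcal G_\ell$ without two critical points of $F_{j_-}$ colliding, i.e., without $(\chi,\eta)$ crossing $\mathcal E_{RS}^{(\ell)}$, so the loop keeps the stated combinatorial structure throughout $\mathcal G_\ell$.
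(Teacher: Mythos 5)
Your construction is a genuine alternative to the paper's: the paper does not take the steepest ascent contour for $\gamma_{\chi,\eta}$, but instead works with the sublevel set $\Omega_{\chi,\eta}^{(0)}$, the connected component of $\{\re F_{j_-}\geq \re F_{j_-}(z_*)\}$ containing the saddle, and takes for $\gamma_{\chi,\eta}$ \emph{any} simple loop through $z_*$ and a point of $(1,\infty)$ that stays inside $\Omega_{\chi,\eta}^{(0)}$. That said, your write-up leaves the central technical step essentially unargued, and I do not think the phrases you use to cover it ("maximum-principle comparison", "topological count") actually close the gap.

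The heart of the lemma is the claim that the cuts $(x_{2j+1},x_{2j})$, $j=0,\dots,k'-1$, lie strictly below the saddle value, i.e.\ $\re F_{j_-}(w)<\re F_{j_-}(z_*)$ there. This is what lets you take $\Gamma_{\chi,\eta}$ tight around the cuts and still get a uniform $\varepsilon$. In your proposal this is replaced by "a maximum-principle comparison for the harmonic function $\re F_{j_-}$ in the multiply connected domain between $\gamma_{\chi,\eta}$ and the cuts yields strict separation." The maximum principle by itself cannot produce such a strict separation: you know $\re F_{j_-}\geq\re F_{j_-}(z_*)$ on $\gamma_{\chi,\eta}$ and you want to conclude $\re F_{j_-}<\re F_{j_-}(z_*)$ on a disjoint piece of the boundary of some intermediate domain, but the maximum principle only controls the maximum of $\re F_{j_-}$ over the entire boundary of such a domain, not the sign of a difference across disconnected boundary pieces. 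What actually makes the paper's argument work is the pointwise bound $\re F_{j_-}(z)\leq \frac{k}{4}(\eta+1)\log|z|-\frac{k}{2}\log|z-1|$ coming from the definition of $j_-$, the observation that this upper envelope is unimodal with peak at $z_0=(\eta+1)/(\eta-1)$, equality on the cuts, and a maximum-principle contradiction when a \emph{bounded} component of the complement of $\Omega_{\chi,\eta}$ has constant $\re F_{j_-}$ on its boundary. None of this appears in your proof. Without it, the inequality \eqref{eq:lem_smooth:inequality_1} is not established.

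There is also a circularity in the topological step. You assert that the transverse ascent branch from $z_*$ must return to $\RR$ and then by a "topological count" must close up around $\{0,1\}$ and the cuts with index $j<\ell$; but the ascent trajectory stays in $\{\re F_{j_-}>\re F_{j_-}(z_*)\}$, so knowing it avoids the cuts is \emph{equivalent} to knowing the cuts lie below the saddle value --- exactly the fact in dispute. The fallback continuity argument from $(0,\eta_0)$ can be made to work, but the claim that "the topological type of $\gamma_{\chi,\eta}$ cannot change inside $\mathcal G_\ell$ without two critical points of $F_{j_-}$ colliding" itself requires an argument (you must rule out a change in asymptotics at the logarithmic singularities $0$, $1$, $\infty$ and at the cut endpoints), and you do not supply one. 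Finally, the local saddle description is slightly off: since $z_*$ is a local maximum of $\re F_{j_-}$ along $I_\ell$, both rays of the transverse branch of $\{\im F_{j_-}=\text{const}\}$ are ascent rays, so $\re F_{j_-}$ exceeds the saddle value on both sides of $z_*$, not on one side only. The derivation of \eqref{eq:lem_smooth:inequality_2} from \eqref{eq:lem_smooth:inequality_1} via the sign of $-\chi\log|\rho_{j_+}|$ is correct and matches the paper.
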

 See Figure \ref{fig:smooth:first_deformation} for an example of the curves.
 \begin{proof}
 We recall the definition of $ j_- $, \eqref{eq:sec_smooth_rough:j}, and let 
  \begin{equation}
   \Omega_{\chi,\eta} = \{z \in \CC:\re F_{j_-}(z;\chi,\eta) \geq \re F_{j_-}(z_{j_-}(\chi,\eta);\chi,\eta)\}.
  \end{equation}
  Let $ \Omega_{\chi,\eta}^{(0)} $ be the connected component of $ \Omega_{\chi,\eta} $ containing $ z_{j_-}(\chi,\eta) $, see Figure \ref{fig:smooth:domain} for an example.   
  
  To create the curves with the right properties we start by investigating $ \Omega_{\chi,\eta}^{(0)} $ and show that it is bounded and does not contain any cuts, that is, it does not contain any of the intervals $ (x_{2j+1},x_{2j}) $. We later define $ \gamma_{\chi,\eta} $ and $ \Gamma_{\chi,\eta} $ so that $ \gamma_{\chi,\eta}\subset \Omega_{\chi,\eta}^{(0)} $ and $ \Gamma_{\chi,\eta} \subset \CC\backslash \Omega_{\chi,\eta}^{(0)} $.
  
  It follows from Lemma \ref{lem:eigenvalues} \eqref{eq:lem_eigenvalues:at_infinity}, which shows that $ \re F_i(z;\chi,\eta) \to -\infty $ as $ |z| \to \infty $, that $ \Omega_{\chi,\eta} $ is bounded.
  
  Let $ \gamma_{l (r)} $ be a part of the boundary of $ \Omega_{\chi,\eta}^{(0)} $, ``just'' to the left (right) of $ z_{j_-} $. More precisely let $ \gamma_{l(r)} \subset \partial \Omega_{\chi,\eta}^{(0)} $ be a simple, closed curve intersecting $ z_{j_-} $ and for which there is a neighborhood of $ z_{j_-} $ which $ \gamma_{l(r)} $ divide in such a way that there is no part of $ \Omega_{\chi,\eta}^{(0)} $ to the left (right) of $ \gamma_{l(r)} $ inside this neighborhood (see Figure \ref{fig:smooth:domain}). Both $ \gamma_l $ and $ \gamma_r $ exist since $ z_{j_-}(\chi,\eta) $ is a local maximum of the function $ \RR_- \ni z\mapsto \re F_{j_-}(z;\chi,\eta) $. That $ z_{j_-} $ is a local maximum requires an argument which is given below.
  
  First we show that $ z_{j_-}(\chi,\eta) $ is a critical point of $ z\mapsto F_{j_-}(z;\chi,\eta) $ for all $ (\chi,\eta) \in \mathcal G_\ell $. Recall from the discussion just after \eqref{eq:sec_smooth:derivative_middle_line} that this is true in a neighborhood of $ (0,\eta_0) $ by definition of $ z_{j_-} $. If $ \chi \neq 0 $ then \eqref{eq:polynomial_of_critical_points} and Lemma \ref{lem:logarithmic_derivative} implies that there is no critical point of $ F $ at $ y_{2\ell} $ or $ y_{2\ell-1} $. By continuity of the critical points of $ F $ in $ (\chi,\eta)\in \mathcal G_\ell $ it follows that the critical point in $ C_\ell $ corresponding to $ z_{j_-} $ can not change sheet unless $ \chi=0 $, and by \eqref{eq:sec_smooth:derivative_middle_line} it does not change sheet when $ \chi=0 $ either. Hence $ z_{j_-} $ is a critical point of $ F_{j_-} $, and in particular a critical point of $ \re F_{j_-} $. 
  
  Now, since $ \im F_{j_-} $ is constant on $ I_\ell $, a critical point of $ I_\ell \ni z \mapsto \re F_{j_-}(z;\chi,\eta) $ is a critical point of $ F_{j_-} $. Since the critical points of $ F $ does not coalesce when $ (\chi,\eta) $ is varied in $ \mathcal G_\ell $, the critical points of $ I_\ell \ni z \mapsto \re F_{j_-}(z;\chi,\eta) $ does not coalesce, so the second derivative can not change sign. It follows from \eqref{eq:sec_smooth:derivative_middle_line} that $ z_{j_-}(0,\eta) $ is a local maximum of the function $ I_\ell \ni z \mapsto \re F_{j_-}(z;0,\eta) $. Since $ I_\ell \ni z \mapsto \re F_{j_-}(z;\chi,\eta) $ has a non vanishing second derivative it follows by continuity that $ z_{j_-}(\chi,\eta) $ is a local maximum for all $ (\chi,\eta) \in \mathcal G_\ell $, as claimed. 
  
  Let the ``first'' time $ \gamma_{l(r)} $ intersect the real line away from $ z_{j_-} $ be at $ x_{l(r)} $. By Lemma \ref{lem:eigenvalues} \eqref{eq:lem_eigenvalues:conjugate} $ \Omega_{\chi,\eta}^{(0)} $ is symmetric with respect to the real line, so $ x_{l(r)} $ does not depend on if we follow $ \gamma_{l(r)} $ in the lower or upper half plane and by definition of $ \gamma_{l(r)} $ the ``first'' time is the only time it intersect the real line away from $ z_{j_-} $. We show below that $ x_l, x_r>0 $. 
  
   \begin{figure}[t]
  \begin{center}
   \begin{tikzpicture}[scale=1]
   \draw (0,0) node {\includegraphics[scale=.5,trim = .5cm 1.9cm .8cm 1.9cm, clip]{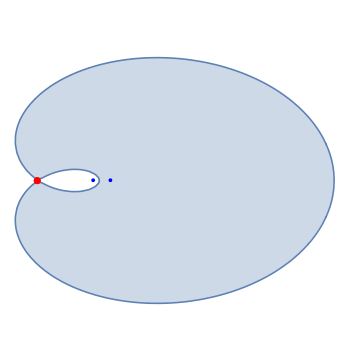}};
   \draw (-2.3,-.2) node [below]{$\color{red}z_{j_-}$};
   \draw (-1.5,.2) node [above]{$\color{blue} 0 $};
   \draw (-1.15,.2) node [above]{$\color{blue} 1 $};
   \draw (1,0) node {$ \Omega_{\chi,\eta}^{(0)}$};
   \draw (-2.55,.7) node {$ \gamma_l $};
   \draw (-1.6,-.5) node {$ \gamma_r $};   
   \end{tikzpicture}
  \end{center}
  \caption{An example of the set $ \Omega_{\chi,\eta}^{(0)} $. The curves $ \gamma_l $ and $ \gamma_r $ are constructed from the boundary of $ \Omega_{\chi,\eta}^{(0)} $. \label{fig:smooth:domain}}
\end{figure}
  
  Assume for a contradiction that $ x_l<z_{j_-} $. By definition of $ j_- $,
  \begin{equation}
   \re F_{j_-}(z;\chi,\eta) \leq \frac{k}{4}(\eta+1)\log|z|-\frac{k}{2}\log|z-1|,
  \end{equation}
  with equality for $ z \in \cup_{j=0}^{k'-1}(x_{2j+1},x_{2j}) $ by Lemma \ref{lem:eigenvalues} \eqref{eq:lem_eigenvalues:at_cuts}. The right hand side is increasing to the left of $ z_0 = (\eta+1)/(\eta-1) $ and decreasing to the right of $ z_0 $. Hence all cuts between $ x_l $ and $ z_0 $ and all cuts between $ z_{j_-} $ and $ z_0 $ are in the interior of $ \Omega_{\chi,\eta} $. In particular all cuts between $ x_l $ and $ z_{j_-} $ are in the interior of $ \Omega_{\chi,\eta} $. This means that $ \re F_{j_-} $ is constant on the boundary of the connected component of the complement of $ \Omega_{\chi,\eta} $ which lies between $ x_l $ and $ z_{j_-} $. Since $ \re F_{j_-} $ is harmonic and non-constant on that connected component, this contradicts the maximum principle. 
  
  With a similar argument we get that $ x_r \not\in (z_{j_-},0) $. Again by the maximum principle we get that $ x_l,x_r\neq z_{j_-} $. Since $ \re F_{j_-} $ is harmonic in the upper and in the lower half plane it follows by the maximum principle that $ \gamma_l $ and $ \gamma_r $ can not intersect ``before'' $ x_l $ respectively $ x_r $ and hence $ \gamma_l $ and $ \gamma_r $ can not intersect at all except at $ z_{j_-} $. Hence $ x_l,x_r>0 $. This means that $ \Omega_{\chi,\eta}^{(0)} $ does not intersect the negative real line away from $ z_{j_-} $ and in particular does not contain any cuts.
  
  We now consider the positive part of the real line. By Lemma \ref{lem:eigenvalues} \eqref{eq:lem_eigenvalues:at_infinity}, $ \re F_{j_-}(z;\chi,\eta) \to -\infty $ as $ |z| \to 0,\infty $. Rewrite $ \re F_1 $ to 
  \begin{equation}
   \re F_1(z;\chi,\eta) = \frac{k}{4}(\eta+1)\log|z|-\frac{k}{2}(1-\chi)\log|z-1|-\frac{\chi}{2}\log|(z-1)^k\rho_1(z)|,
  \end{equation}
  and $ \re F_2 $ to 
  \begin{equation}
   \re F_2(z;\chi,\eta) = \frac{k}{4}(\eta+1)\log|z|-\frac{k}{2}(\chi+1)\log|z-1|-\frac{\chi}{2}\log\left|\frac{\rho_2(z)}{(z-1)^k}\right|.
  \end{equation}
  Since $ \rho_1 $ has a pole of order $ k $ at $ z=1 $ and $ \rho_2 $ has a zero of order $ k $ at $ z=1 $, it follows from the above equalities that $ \re F_{j_-}(z;\chi,\eta) \to \infty $ as $ z \to 1 $. Since $ F_{j_-} $, and therefore also $ \re F_{j_-} $, does not have any critical points in $ (0,\infty) $, it follows that $ \re F_{j_-} $ increases from $ -\infty $ to $ \infty $ on $ (0,1) $ and decreases from $ \infty $ to $ -\infty $ on $ (1,\infty) $. Hence, a neighborhood of $ 1 $ and nothing more of the positive real line is contained in $ \Omega_{\chi,\eta} $.
  
  All the above means that $ \Omega_{\chi,\eta}^{(0)} $ does not contain $ \infty $, zero or any cuts. If $ \Omega_{\chi,\eta}^{(0)} $ does not contain one it would mean that $ \re F_{j_-} $ is harmonic in $ \Omega_{\chi,\eta}^{(0)} $ and constant on the boundary, which by the maximum principle implies that $ \re F_{j_-} $ is constant. So $ 1 \in \Omega_{\chi,\eta}^{(0)} $. 
  
  We are ready to define the curves. Since $ \Omega_{\chi,\eta}^{(0)} $ is connected we can connect $ z_{j_-}(\chi,\eta) $ and a point in $ \Omega_{\chi,\eta}^{(0)}\cap (1,\infty) $, with a path contained in $ \Omega_{\chi,\eta}^{(0)} $ and which does not intersect the real line except at the endpoints. Let $ \gamma_{\chi,\eta} $ be this path together with its reflection in $ \RR $. All of $ \gamma_{\chi,\eta} $ lies in $ \Omega_{\chi,\eta}^{(0)} $ since $ \re F_{j_-}(\bar z;\chi,\eta) = \re F_{j_-}(z;\chi,\eta) $ by Lemma \ref{lem:eigenvalues} \eqref{eq:lem_eigenvalues:conjugate}, which means that $ \Omega_{\chi,\eta}^{(0)} $ is symmetric with respect to the real line.
  
  We now construct $\Gamma_{\chi,\eta} $. Take a union of curves going around each cut ones. Take the curves so close to the cuts that they do not intersect $ \Omega_{\chi,\eta} $. In case there is a component of $ \Omega_{\chi,\eta} $ containing a part of a cut, take the curve such that it goes around that component as well. We do this such that the curves have a positive distance to $ \Omega_{\chi,\eta} $. 
  
  Let $ w \in \Gamma_{\chi,\eta} $ and $ z \in \gamma_{\chi,\eta} $, then
  \begin{equation}
   \re F_{j_-}(w;\chi,\eta)-\re F_{j_-}(z;\chi,\eta) \leq -\eps,
  \end{equation}
  for some $ \eps>0 $, since $ \Gamma_{\chi,\eta} $ has a positive distance to $ \Omega_{\chi,\eta} $. Moreover
  \begin{multline}
   \re F_{j_-}(w;\chi,\eta)-\re F_{j_+}(z;\chi,\eta) = (\re F_{j_-}(w;\chi,\eta)-\re F_{j_-}(z;\chi,\eta)) \\
   +(\re F_{j_-}(z;\chi,\eta)-\re F_{j_+}(z;\chi,\eta)) \leq -\eps,
  \end{multline}
  by definition of $ j_- $ and $ j_+ $.
 \end{proof}
 
 The orientation of $ \gamma_{\chi,\eta} $ and $ \Gamma_{\chi,\eta} $ are taken counter clockwise respectively clockwise. We now prove Theorem \ref{thm:local_smooth}.
 
\begin{proof}[Proof of Theorem \ref{thm:local_smooth}]

 In the expression of the kernel in Theorem \ref{thm:finite_kernel} we have some freedom in the choice of the curve $ \gamma_{0,1} $. We take $ \gamma_{0,1} $ as $ \gamma_{\chi,\eta} $ and deform $ \gamma_1 $ to $ \Gamma_{\chi,\eta} $. Both $ \gamma_{\chi,\eta} $ and $ \Gamma_{\chi,\eta} $ are defined in Lemma \ref{lem:smooth:curves}, see Figure \ref{fig:smooth:first_deformation}. The only contribution from the deformation comes from the simple pole at $ w=z $. Hence
 \begin{multline}
  \left[K\left(2km,2\xi+i;2km',2\xi'+j\right)\right]_{i,j=0}^1 = \\
  = \frac{1}{2\pi\i}\oint_{\gamma_{\chi,\eta}} \rho_1(z)^{\kappa-\kappa'}z^{\zeta'-\zeta}E(z)
  \begin{pmatrix}
   1 & 0 \\
   0 & 0
  \end{pmatrix}
  E(z)^{-1}\frac{\d z}{z} \\
  -\frac{\mathds{1}_{\kappa>\kappa'}}{2\pi\i}\oint_{\gamma_{\chi,\eta}} \Phi(z)^{\kappa-\kappa'}z^{\zeta'-\zeta}\frac{\d z}{z} \\
  + \frac{1}{(2\pi\i)^2}\oint_{\Gamma_{\chi,\eta}}\oint_{\gamma_{\chi,\eta}} \frac{w^{\frac{kN}{4}(\eta+1)+e_\eta+\zeta'}}{z^{\frac{kN}{4}(\eta+1)+e_\eta+\zeta+1}}\frac{(z-1)^{\frac{kN}{2}}}{(w-1)^{\frac{kN}{2}}}\rho_1(w)^{-\frac{N}{2}\chi-e_\chi-\kappa'} \\
  \times E(w)
  \begin{pmatrix}
   1 & 0 \\
   0 & 0
  \end{pmatrix}
  E(w)^{-1}\Phi(z)^{\frac{N}{2}\chi+e_\chi+\kappa}\frac{\d z\d w}{z-w}.
 \end{multline}
 In the first two terms we deform $ \gamma_{\chi,\eta} $ to $ \gamma_\ell $, there is no extra contribution from this deformation. The sum of them is precisely $ K_{smooth}^{(\ell)} $. We show below that the last term is small, which proves the theorem. 
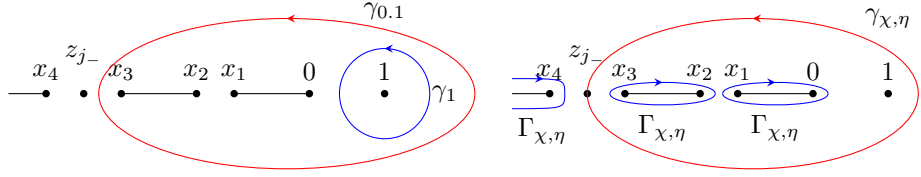
\begin{figure}[t]
 \begin{center}
  \begin{tikzpicture}[scale=1]
  \tikzset{-<-/.style={decoration={markings,mark=at position .25 with {\arrow{stealth}}},postaction={decorate}}}
   \draw (1,0) node[circle,fill,inner sep=1pt,label=above:$1$]{};
   \draw (0,0) node[circle,fill,inner sep=1pt,label=above:$0$]{};
   \draw (-1,0) node[circle,fill,inner sep=1pt,label=above:$x_1$]{};
   \draw (-1.5,0) node[circle,fill,inner sep=1pt,label=above:$x_2$]{};
   \draw (-2.5,0) node[circle,fill,inner sep=1pt,label=above:$x_3$]{};
   \draw (-3.5,0) node[circle,fill,inner sep=1pt,label=above:$x_4$]{};
   \draw (-3,0) node[circle,fill,inner sep=1pt]{};
   \draw (-3,.5) node {$z_{j_-}$};
   \draw (1,1.1) node{$ \gamma_{0.1}$};
   \draw (1.8,0) node{$ \gamma_{1}$};
   \draw (-1,0)--(0,0);
   \draw (-2.5,0)--(-1.5,0);
   \draw (-4,0)--(-3.5,0);
   \draw[-<-,blue] (1.,0) circle (.6);
   \draw[-<-,red] (-.3,0) ellipse (2.5 and 1);
  \end{tikzpicture}
  \hspace{.22cm}
  \begin{tikzpicture}[scale=1]
    \tikzset{->-/.style={decoration={markings,mark=at position .5 with {\arrow{stealth}}},postaction={decorate}}}
    \tikzset{-<-/.style={decoration={markings,mark=at position .25 with {\arrow{stealth}}},postaction={decorate}}}
   \draw (1,0) node[circle,fill,inner sep=1pt,label=above:$1$]{};
   \draw (0,0) node[circle,fill,inner sep=1pt,label=above:$0$]{};
   \draw (-1,0) node[circle,fill,inner sep=1pt,label=above:$x_1$]{};
   \draw (-1.5,0) node[circle,fill,inner sep=1pt,label=above:$x_2$]{};
   \draw (-2.5,0) node[circle,fill,inner sep=1pt,label=above:$x_3$]{};
   \draw (-3.5,0) node[circle,fill,inner sep=1pt,label=above:$x_4$]{};
   \draw (-3,0) node[circle,fill,inner sep=1pt]{};
   \draw (-3,.5) node {$z_{j_-}$};
   \draw (1,1) node{$ \gamma_{\chi,\eta}$};
   \draw (-.5,-.45) node{$ \Gamma_{\chi,\eta}$};
   \draw (-2,-.45) node{$ \Gamma_{\chi,\eta}$};
   \draw (-3.6,-.5) node{$ \Gamma_{\chi,\eta}$};
   \draw (-1,0)--(0,0);
   \draw (-2.5,0)--(-1.5,0);
   \draw (-4,0)--(-3.5,0);
   \draw[->-,blue] (-1.2,0) .. controls (-1.2,.2) and (.2,.2) .. (.2,0);
   \draw[blue] (-1.2,0) .. controls (-1.2,-.2) and (.2,-.2) .. (.2,0);
   \draw[->-,blue] (-2.7,0) .. controls (-2.7,.2) and (-1.3,.2) .. (-1.3,0);
   \draw[blue] (-2.7,0) .. controls (-2.7,-.2) and (-1.3,-.2) .. (-1.3,0);
   \draw[->-,blue] (-4,.2) .. controls (-3.3,.2) .. (-3.3,0);
   \draw[blue] (-4,-.2) .. controls (-3.3,-.2) .. (-3.3,0);
   \draw[-<-,red] (-.8,0) ellipse (2.2 and 1);
  \end{tikzpicture}
  \caption{An example of deformation of the contours in the steepest descent analysis of the correlation kernel at the smooth region.\label{fig:smooth:first_deformation}}
 \end{center}
\end{figure}
 
 In case $ j_-=1 $ we are satisfied with the expression of the last term. However if $ j_-=2 $ we want to change $ \rho_2(w) $ to $ \rho_1(w) $. Why this is the case will be transparent later. By collapsing $ \Gamma_{\chi,\eta} $ to the cuts, that is to $ \cup_{j=0}^{k'-1}(x_{2j+1},x_{2j}) $ where $ x_{2k'-1}=-\infty $, we get the same integral as if we consider the integral where all $ \rho_1(w) $ are interchanged with $ \rho_2(w) $ and collapse $ \Gamma_{\chi,\eta} $ to the cuts, the only cost is a minus sign in front of the integral. To see this note that $ (\rho_1)_\pm(w) = (\rho_2)_\mp(w) $ for $ w \in \cup_{j=0}^{k'-1}(x_{2j+1},x_{2j}) $, where $ (\rho_i)_\pm $ is the limit of $ \rho_i $ approaching from above respectively from below. We use this fact in case $ j_- = 2 $ and recall that $ j_-=1 $ if and only of $ \chi>0 $ to rewrite the last term to
 \begin{multline}
   \frac{(-1)^{\mathds{1}_{\chi\leq 0}}}{(2\pi\i)^2}\oint_{\Gamma_{\chi,\eta}}\oint_{\gamma_{\chi,\eta}} \frac{w^{\frac{kN}{4}(\eta+1)+e_\eta+\zeta'}}{z^{\frac{kN}{4}(\eta+1)+e_\eta+\zeta+1}}\frac{(z-1)^{\frac{kN}{2}}}{(w-1)^{\frac{kN}{2}}}\rho_{j_-}(w)^{-\frac{N}{2}\chi-e_\chi-\kappa'} \\
  \times E(w)
  \begin{pmatrix}
   \mathds{1}_{\chi>0} & 0 \\
   0 & \mathds{1}_{\chi \leq 0}
  \end{pmatrix}
  E(w)^{-1}\Phi(z)^{\frac{N}{2}\chi+e_\chi+\kappa}\frac{\d z\d w}{z-w}.
 \end{multline}
 We write 
 \begin{equation}
  \Phi(z)^n = E(z)
  \begin{pmatrix}
   \rho_1(z)^n & 0 \\
   0 & 0
  \end{pmatrix}
  E(z)^{-1} + E(z)
  \begin{pmatrix}
   0 & 0 \\
   0 & \rho_2(z)^n
  \end{pmatrix}
  E(z)^{-1},
 \end{equation}
 for $ n = \frac{N}{2}\chi+e_\chi+\kappa $ and use this equality to divide the integral into two terms. We end up with the smooth kernel plus the integral
 \begin{multline}
  \frac{(-1)^{\mathds{1}_{\chi \leq 0}}}{(2\pi\i)^2}\oint_{\Gamma_{\chi,\eta}}\oint_{\gamma_{\chi,\eta}}\e^{N\left(F_{j_-}(w;\chi,\eta)-F_1(z;\chi,\eta)\right)} \frac{w^{e_\eta+\zeta'}}{z^{e_\eta+\zeta+1}}\frac{\rho_1(z)^{e_\chi+\kappa}}{\rho_{j_-}(w)^{e_\chi+\kappa'}} \\
  \times E(w)
  \begin{pmatrix}
   \mathds{1}_{\chi>0} & 0 \\
   0 & \mathds{1}_{\chi\leq 0}
  \end{pmatrix}
  E(w)^{-1}E(z)
  \begin{pmatrix}
   1 & 0 \\
   0 & 0
  \end{pmatrix}
  E(z)^{-1}\frac{\d z\d w}{z-w} \\
  + \frac{(-1)^{\mathds{1}_{\chi\leq 0}}}{(2\pi\i)^2}\oint_{\Gamma_{\chi,\eta}}\oint_{\gamma_{\chi,\eta}}\e^{N\left(F_{j_-}(w;\chi,\eta)-F_2(z;\chi,\eta)\right)} \frac{w^{e_\eta+\zeta'}}{z^{e_\eta+\zeta+1}}\frac{\rho_2(z)^{e_\chi+\kappa}}{\rho_{j_-}(w)^{e_\chi+\kappa'}} \\
  \times E(w)
  \begin{pmatrix}
   \mathds{1}_{\chi>0} & 0 \\
   0 & \mathds{1}_{\chi\leq 0}
  \end{pmatrix}
  E(w)^{-1}E(z)
  \begin{pmatrix}
   0 & 0 \\
   0 & 1
  \end{pmatrix}
  E(z)^{-1}\frac{\d z\d w}{z-w},
 \end{multline}
 where $ F_1 $ and $ F_2 $ are given in \eqref{eq:sec_eigenvalues:F1} and \eqref{eq:sec_eigenvalues:F2}. From \eqref{eq:lem_smooth:inequality_1} or \eqref{eq:lem_smooth:inequality_2} we have
 \begin{multline}
  \left|\frac{(-1)^{\mathds{1}_{\chi\leq0}}}{(2\pi\i)^2}\oint_{\Gamma_{\chi,\eta}}\oint_{\gamma_{\chi,\eta}}\e^{N\left(F_{j_-}(w;\chi,\eta)-F_1(z;\chi,\eta)\right)} \frac{w^{e_\eta+\zeta'}}{z^{e_\eta+\zeta+1}}\frac{\rho_1(z)^{e_\chi+\kappa}}{\rho_{j_-}(w)^{e_\chi+\kappa'}} \right. \\
  \left.\times E(w)
  \begin{pmatrix}
   \mathds{1}_{\chi>0} & 0 \\
   0 & \mathds{1}_{\chi\leq0}
  \end{pmatrix}
  E(w)^{-1}E(z)
  \begin{pmatrix}
   1 & 0 \\
   0 & 0
  \end{pmatrix}
  E(z)^{-1}\frac{\d z\d w}{z-w}\right| \\
  \leq \e^{N\sup_{w\in\Gamma_{\chi,\eta}}\sup_{z\in\gamma_{\chi,\eta}}\left(\re F_{j_-}(w;\chi,\eta)-\re F_1(z;\chi,\eta)\right)} \Ordo(1) 
  = \Ordo\left(\e^{-N\eps}\right). 
 \end{multline}
 Similarly we get that the last term is $ \Ordo\left(\e^{-N\eps}\right) $ using the other of \eqref{eq:lem_smooth:inequality_1} and \eqref{eq:lem_smooth:inequality_2}, 
 which proves the theorem.
\end{proof}

\subsection{Proof of Theorem \ref{thm:local_rough}}

The structure of the proof is the same as the proof of Theorem \ref{thm:local_smooth}. 

Let $ (\chi,\eta) \in \mathcal G_R $. Let $ z_1=z_1(\chi,\eta) $ be the first component of $ L(\chi,\eta) $ and $ z_2=z_2(\chi,\eta) = \overline{z_1}(\chi,\eta)$.
\begin{lemma}\label{lem:rough:critical_points}
If $ z_1,z_2 \not\in \RR $ then $ z_1 $ and $ z_2 $ are critical points of $ F_{j_-} $. 
\end{lemma}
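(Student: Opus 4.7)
The plan is to leverage the decomposition of $\mathcal R_{12}$ into the upper half of the first sheet, on which $\rho(z,w) = \rho_1(z)$, and the lower half of the second sheet, on which $\rho(z,w) = \rho_2(z)$. By Proposition~\ref{prop:L_map} the point $L(\chi,\eta) = (z_1,w_1)$ is a critical point of $F$ lying in $\mathcal R_{12}$, and expressing $F$ in local coordinates shows that $z_1$ is a critical point of $F_1$ when $\im z_1 > 0$ and of $F_2$ when $\im z_1 < 0$; the hypothesis $z_1 \notin \RR$ discards the remaining degenerate case, which by \eqref{eq:sec_smooth:derivative_middle_line} occurs precisely at $\chi = 0$. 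Because $\rho_i(\bar z) = \overline{\rho_i(z)}$ (Lemma~\ref{lem:eigenvalues}~\eqref{eq:lem_eigenvalues:conjugate}), once $z_1$ is shown to be a critical point of some $F_i$, its conjugate $z_2 = \bar z_1$ automatically is one as well, so the only real issue is to match the sheet of $L(\chi,\eta)$ with $j_-$.

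For this I would revisit the proof of Proposition~\ref{prop:L_map} and use the formula for $\chi$ obtained from the second coordinate of $A(z)\binom{\chi}{\eta} = B(z)$, namely
\begin{equation}
\chi = -\frac{k\,\im\!\left(\frac{z+1}{z-1}\right)}{2\,\im\!\left(\frac{z\rho_i'(z)}{\rho_i(z)}\right)},
\end{equation}
with $i=1$ for $\im z > 0$ and $i=2$ for $\im z < 0$. Since $\im\frac{z+1}{z-1} = -2\im z/|z-1|^2$ has the opposite sign to $\im z$, the sign of $\chi$ is governed by that of $\im\frac{z\rho_i'(z)}{\rho_i(z)}$ on the corresponding half plane.

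The technical core is to pin down the sign of $\im\frac{z\rho_1'(z)}{\rho_1(z)}$ on the upper half plane. Lemma~\ref{lem::technical}~\eqref{eq:lem_technical:0} already rules out any zero of this quantity there, so by continuity it suffices to test the sign at a single point. I would do this near $z=1$: from \eqref{eq:sec_eigenvalues:local_eigenvalue} together with $p_0(1) > 0$, $q(1) > 0$ (because $q$ is monic with all roots non-positive by Proposition~\ref{prop:zeros_of_p}) and $(z-1)^k\Tr\Phi(z)|_{z=1} > 0$, one reads off $\rho_1(z) \sim C(z-1)^{-k}$ as $z \to 1$ with $C > 0$. Hence $\frac{z\rho_1'(z)}{\rho_1(z)} = -\frac{k}{z-1} + O(1)$, and writing $z = 1 + re^{i\theta}$ with $\theta \in (0,\pi)$ gives $\im\frac{z\rho_1'(z)}{\rho_1(z)} \sim \frac{k\sin\theta}{r} > 0$. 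An identical computation, now exploiting $\rho_1\rho_2 = 1$, shows that $\im\frac{z\rho_2'(z)}{\rho_2(z)} > 0$ throughout the lower half plane.

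Plugging these signs back into the formula for $\chi$ yields $\chi > 0$ whenever $L(\chi,\eta)$ sits on the upper first sheet and $\chi < 0$ whenever it sits on the lower second sheet. Comparing with the definition of $j_-$ (which picks $j_-=1$ for $\chi > 0$ and $j_-=2$ for $\chi \leq 0$) identifies $F_i$ with $F_{j_-}$ in each case, so $z_1$ is a critical point of $F_{j_-}$; applying complex conjugation to $F_{j_-}'(z_1) = 0$ and using Lemma~\ref{lem:eigenvalues}~\eqref{eq:lem_eigenvalues:conjugate} then gives $F_{j_-}'(z_2) = 0$ as well. The principal obstacle is the sign analysis in the third paragraph; the rest amounts to unpacking definitions and exploiting the conjugation symmetry already established.
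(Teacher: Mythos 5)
Your proof is correct, and it takes a genuinely different route than the paper's. Both arguments ultimately exploit the pole of $\rho_1$ at $z=1$ via the asymptotic $\frac{z\rho_1'(z)}{\rho_1(z)} \sim -\frac{k}{z-1}$, but they use it differently. The paper evaluates on the real axis: it fixes $x>1$ near $1$ so that $\rho_1'(x)/\rho_1(x) < 0$, shows $F_{j_+}'(x;\chi,\eta) < 0$ there, pulls back through a sequence $(\chi',\eta')$ with $z_1(\chi',\eta') \to x$, and then extends to all of $\mathcal G_R$ by a continuity/cut-crossing argument (the phrase ``$z_1$ passes a cut precisely when $\chi$ changes sign and therefore $j_-$ changes value''). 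You instead extract the explicit formula $\chi = -\frac{k}{2}\,\im\bigl(\frac{z+1}{z-1}\bigr)\big/\im\bigl(\frac{z\rho_i'(z)}{\rho_i(z)}\bigr)$ from the second component of the linear system in the proof of Proposition~\ref{prop:L_map}, determine that $\im\bigl(\frac{z\rho_1'(z)}{\rho_1(z)}\bigr)>0$ throughout the open upper half plane (Lemma~\ref{lem::technical}~\eqref{eq:lem_technical:0} shows it is nonvanishing, and the test near $z=1$ fixes the sign), and read off the sign of $\chi$ directly at each $z_1$. This dispenses with the continuity-across-cuts step entirely — the sign match between the sheet of $L(\chi,\eta)$ and $j_-$ is verified pointwise — which is arguably cleaner. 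The conjugation step for $z_2=\bar z_1$ is handled identically in both. One small point worth spelling out: to invoke ``by continuity it suffices to test the sign at a single point'' you implicitly use that the open upper half plane is connected and $\rho_1$ is analytic and nonvanishing there; both hold (Lemma~\ref{lem:eigenvalues}~\eqref{eq:lem_eigenvalues:inequality}, and the cuts lie on $\RR$), so the argument is sound.
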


\begin{proof}
 By Lemma \ref{lem:eigenvalues} \eqref{eq:lem_eigenvalues:conjugate} it is enough to prove the statement for $ z_1(\chi,\eta) $.
 
 Take an $ x>1 $ such that $ \frac{\rho_1'(x)}{\rho_1(x)}<0 $, such $ x $ exists by \eqref{eq:lem_surjective:pole_at_one}. By definition of $ j_\pm $ and Lemma \ref{lem:logarithmic_derivative}, 
  \begin{equation}\label{eq:lem_rough_critical_points}
  -\frac{\chi}{2}\frac{\rho_{j_+}'(x)}{\rho_{j_+}(x)}<0,
 \end{equation}
 for all $ \chi \in (-1,1)\backslash \{0\} $. Since $ L $ is a diffeomorphism, Proposition \ref{prop:L_map}, there is a sequence in $ \mathcal G_R $ such that $ (\chi',\eta') \to (\chi,\eta) $ for some $ (\chi,\eta) \in (-1,1)^2 $, $ \chi \neq 0 $, and $ \lim_{(\chi',\eta')\to (\chi,\eta)}z_1(\chi',\eta')= x $. Take the sequence close enough to $ (\chi,\eta) $ so there is an $ i\in \{1,2\} $ such that $ F_i'(z_1(\chi',\eta');\chi',\eta') = 0 $. By Lemma \ref{lem:projected_critical_points}, $ F_i'(x;\chi,\eta) = 0 $. However by \eqref{eq:lem_rough_critical_points}
 \begin{equation}
  F_{j_+}'(x;\chi,\eta) = \frac{k}{4}\frac{(\eta-1)x-(\eta+1)}{x(x-1)}-\frac{\chi}{2}\frac{\rho_{j_+}'(x)}{\rho_{j_+}(x)} < 0.
 \end{equation}
 So $ i=j_- $, that is, the lemma is true for $ z_1(\chi',\eta') $. By continuity of $ z_1 $ the statement is true for all $ (\chi,\eta) \in \mathcal G_R $, since $ z_1 $ passes a cut precisely when $ \chi $ changes sign and therefore $ j_- $ changes value.
\end{proof}

\begin{lemma}\label{lem:rough:curves}
  Let $ (\chi,\eta) \in \mathcal G_R $. There are two curves $ \gamma_{\chi,\eta} $ and $ \Gamma_{\chi,\eta} $ going around zero and one respectively going around one and not zero and only intersecting each other at $ z_1 $ and $ z_2 $, such that, for $ z \in \gamma_{\chi,\eta} $ and $ w \in \Gamma_{\chi,\eta} $,
  \begin{equation}
   \re F_{j_-}(w;\chi,\eta)-\re F_{j_-}(z;\chi,\eta) \leq 0,
  \end{equation}
  with equality only if $ w=z=z_i $, $ i=1,2 $, and 
  \begin{equation}
   \re F_{j_-}(w;\chi,\eta)-\re F_{j_+}(z;\chi,\eta) \leq -\eps,
  \end{equation}
  if $ \chi \neq 0 $ for some $ \eps > 0 $. 
 \end{lemma}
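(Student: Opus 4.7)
The plan is to construct $\gamma_{\chi,\eta}$ and $\Gamma_{\chi,\eta}$ as closed loops obtained from the level set $\{z \in \CC : \re F_{j_-}(z;\chi,\eta) = \re F_{j_-}(z_1;\chi,\eta)\}$, exploiting that $z_1$ and $z_2 = \overline{z_1}$ are the two non-real critical points of $F_{j_-}$ by Lemma \ref{lem:rough:critical_points}, and (by Lemma \ref{lem:projected_critical_points} together with Lemma \ref{lem:zeros_on_loops}) are simple critical points. At a simple saddle the level set looks locally like an ``X'', so there are two steepest-ascent arcs and two steepest-descent arcs emanating from $z_1$; by the symmetry $\re F_{j_-}(\bar z;\chi,\eta) = \re F_{j_-}(z;\chi,\eta)$ following from Lemma \ref{lem:eigenvalues} \eqref{eq:lem_eigenvalues:conjugate}, the picture at $z_2$ is the conjugate of the picture at $z_1$. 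The idea is to take $\gamma_{\chi,\eta}$ as the union of the two ascent arcs from $z_1$ (continued until they hit $z_2$) together with their conjugates, and $\Gamma_{\chi,\eta}$ similarly from the descent arcs.

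To pin down the topology I would analyse $\re F_{j_-}$ on the real line exactly as in the proof of Lemma \ref{lem:smooth:curves}. Namely, $\re F_{j_-}(z;\chi,\eta) \to -\infty$ as $z \to 0$ or $|z| \to \infty$ by Lemma \ref{lem:eigenvalues} \eqref{eq:lem_eigenvalues:at_infinity}; rewriting $F_{j_-}$ to isolate the $(z-1)^k$ factor in $\rho_{j_-}$ shows $\re F_{j_-}(z;\chi,\eta) \to +\infty$ as $z \to 1$; and on each cut $(x_{2j+1},x_{2j})$ the function $\re F_{j_-}$ agrees with the explicit expression $\frac{k}{4}(\eta+1)\log|z| - \frac{k}{2}\log|z-1|$ (Lemma \ref{lem:eigenvalues} \eqref{eq:lem_eigenvalues:at_cuts}). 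In addition there are no real critical points of $F_{j_-}$ in $(0,\infty)\setminus\{1\}$ for $(\chi,\eta) \in \mathcal G_R$, by Proposition \ref{prop:L_map} and Proposition \ref{prop:boundary_is_boundary}. Hence on $(0,1)$ the function is strictly increasing to $+\infty$ and on $(1,\infty)$ strictly decreasing from $+\infty$, so the level $c := \re F_{j_-}(z_1;\chi,\eta)$ is crossed transversally exactly once on each of $(0,1)$ and $(1,\infty)$.

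Using this data I would argue, via the maximum principle for the harmonic function $\re F_{j_-}$ on the upper half plane (slit along the cuts) and connectedness, that the connected component $U$ of $\{\re F_{j_-} > c\}$ containing a small neighbourhood of $z=1$ is bounded, does not intersect the cuts, and has boundary that crosses $(0,1)$ and $(1,\infty)$ at exactly one point each, and reaches the real axis at $z_1$ and $z_2$ only through the saddle. The ascent arcs from $z_1$ into the upper half plane together with those from $z_2$ then form a single simple closed curve $\gamma_{\chi,\eta} \subset \partial U$ encircling both $0$ and $1$; the descent arcs similarly bound a component of $\{\re F_{j_-} < c\}$ that sits inside $U$ and contains $1$ but not $0$, giving a simple closed $\Gamma_{\chi,\eta}$ around $1$ only. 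By construction $\gamma_{\chi,\eta}$ and $\Gamma_{\chi,\eta}$ meet only at $z_1$ and $z_2$, and $\re F_{j_-}(w) \le c \le \re F_{j_-}(z)$ on them with equality only at the saddles, giving the first inequality. The second inequality uses that $\re F_{j_+}(z;\chi,\eta) \ge \re F_{j_-}(z;\chi,\eta)$ for all $z$ (with equality exactly on the cuts) by the definition \eqref{eq:sec_smooth_rough:j} and Lemma \ref{lem:eigenvalues}; since $\gamma_{\chi,\eta}$ stays in the closure of $U$ and is therefore uniformly separated from the cuts when $\chi \neq 0$, one gets $\re F_{j_-}(w) - \re F_{j_+}(z) \le -\eps$ by combining the two estimates.

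The main obstacle will be the global topological control in the preceding paragraph: ruling out that the ascent/descent arcs from $z_1$ run into another saddle, escape to infinity, or produce a component of $\{\re F_{j_-} > c\}$ that either contains $0$ or pinches onto a cut. The key tool is the maximum principle applied on each complementary component: any bounded component of $\{\re F_{j_-} \ge c\}$ whose boundary contains no saddle would force $\re F_{j_-}$ to be constant (contradiction), and containment of $0$ (or $\infty$) would contradict $\re F_{j_-}(z) \to -\infty$ at that point. Once $z_1, z_2$ are identified as the only saddles on the relevant level and the boundary behaviour on the real line is fixed, the topology of $\gamma_{\chi,\eta}$ and $\Gamma_{\chi,\eta}$ is forced.
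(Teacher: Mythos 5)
Your overall strategy matches the paper's: analyze the level set of $\re F_{j_-}$ through the saddles $z_1,z_2$, control its topology via the behaviour of $\re F_{j_-}$ on the real line (strict monotonicity to $+\infty$ on $(0,1)$ and from $+\infty$ on $(1,\infty)$, the bound by the explicit expression on the cuts, symmetry under conjugation, boundedness from Lemma \ref{lem:eigenvalues}\eqref{eq:lem_eigenvalues:at_infinity}) together with the maximum principle, and then build the two contours from this picture. These are precisely the tools the paper uses.

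However, the actual construction of the contours contains a genuine gap. You state explicitly that $\gamma_{\chi,\eta}$ and $\Gamma_{\chi,\eta}$ are ``closed loops obtained from the level set $\{\re F_{j_-}=c\}$'' and in particular that $\gamma_{\chi,\eta}\subset \partial U$; the corresponding claim about $\Gamma_{\chi,\eta}$ as a boundary of a component of $\{\re F_{j_-}<c\}$ also places it on the level set. But then $\re F_{j_-}(w)-\re F_{j_-}(z)=0$ for \emph{every} pair $w\in\Gamma_{\chi,\eta}$, $z\in\gamma_{\chi,\eta}$, which is not ``$\leq 0$ with equality only at the saddles'' and destroys exactly the localization the lemma exists to provide in the steepest descent argument. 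The paper instead takes $\gamma_{\chi,\eta}$ strictly in the interior of $\Omega_{\chi,\eta}=\{\re F_{j_-}\geq c\}$ and $\Gamma_{\chi,\eta}$ strictly in the exterior, so the two contours touch the level set only at $z_1,z_2$; you need this one-sided placement, not the level set itself. Two further statements would not survive scrutiny: steepest-ascent arcs from a saddle of a harmonic function terminate at local maxima or at points where $\re F_{j_-}\to+\infty$ (i.e.\ at $z=1$), not at the other saddle $z_2$, so ``ascent arcs from $z_1$ continued until they hit $z_2$'' is not a well-defined curve; and a component of $\{\re F_{j_-}<c\}$ cannot ``contain $1$'' since $\re F_{j_-}\to+\infty$ there (what is wanted is a curve \emph{encircling} $1$). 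Likewise ``$U$ does not intersect the cuts'' is unjustified; the paper's own argument shows that the cuts between the two negative-axis intersection points of $\partial\Omega_{\chi,\eta}^{(0)}$ lie in the interior of $\Omega_{\chi,\eta}$.
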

 
 \begin{proof}
  Let
  \begin{equation}
   \Omega_{\chi,\eta} = \{z \in \CC:\re F_{j_-}(z;\chi,\eta) \geq \re F_{j_-}(z_1;\chi,\eta)\},
  \end{equation}
  and let $ \Omega_{\chi,\eta}^{(0)} $ be the connected component of $ \Omega_{\chi,\eta} $ which contains $ z_1 $. As in the smooth case we investigate $ \Omega_{\chi,\eta}^{(0)} $ in order to create $ \gamma_{\chi,\eta} $ and $ \Gamma_{\chi,\eta} $ with the desired properties.

  By Lemma \ref{lem:eigenvalues} \eqref{eq:lem_eigenvalues:at_infinity}, $ F_{j_-}(z;\chi,\eta) \to -\infty $ as $ |z| \to \infty $, so $ \Omega_{\chi,\eta} $ is bounded. 
  
  Below we prove that the boundary of $ \Omega_{\chi,\eta}^{(0)} $ intersects the negative part of the real line at, at most, two places and at the positive part of the real line at, at most, two places.

Since $ \re F_{j_-} $ is increasing to infinity in $ (0,1) $ and decreasing from infinity in $ (1,\infty) $ (which we saw in the proof of Lemma \ref{lem:smooth:curves}) it is clear that the claim is true on the positive part of the real line. That this is true also on the negative real line is slightly more subtle. Assume the boundary of $ \Omega_{\chi,\eta}^{(0)} $ intersects the negative part of the real line at three distinct points. Then there are two of these points, say $ x_l $ and $ x_r $, which lie on the boundary of a bounded connected component of the complement of $ \Omega_{\chi,\eta} $, recall that $ \Omega_{\chi,\eta} $ is symmetric with respect to $ \RR $ by Lemma \ref{lem:eigenvalues} \eqref{eq:lem_eigenvalues:conjugate}, call the component $ K $. An argument as when we investigated corresponding set in the smooth case shows that all cuts between $ x_l $ and $ x_r $ are contained in $ \Omega_{\chi,\eta} $. Namely, by definition of $ j_- $,
  \begin{equation}
   \re F_{j_-}(z;\chi,\eta) \leq \frac{k}{4}(\eta+1)\log|z|-\frac{k}{2}\log|z-1|,
  \end{equation}
  with equality for $ z \in \cup_{j=0}^{k'-1}(x_{2j+1},x_{2j}) $, where $ x_{2k'-1}=-\infty $, by Lemma \ref{lem:eigenvalues} \eqref{eq:lem_eigenvalues:at_cuts}. Moreover the right hand side is increasing before $ z_0 = (\eta+1)/(\eta-1) $ and decreasing after $ z_0 $. Hence all cuts between $ x_l $ and $ z_0 $ and all cuts between $ x_r $ and $ z_0 $ are in the interior of $ \Omega_{\chi,\eta} $. In particular all cuts between $ x_l $ and $ x_r $ are in the interior of $ \Omega_{\chi,\eta} $. This means that $\re F_{j_-} $ is harmonic in $ K $ and is constant on the boundary. Since $\re F_{j_-} $ is non-constant on $ K $ we obtain a contradiction by the maximum principle. Hence the boundary of $ \Omega_{\chi,\eta}^{(0)} $ intersect the negative part of the real line at no more than two points. 
  
  To summarize, since $ z_1 $ and $ z_2 $ are critical points of $ F_{j_-} $, Lemma \ref{lem:rough:critical_points}, the boundary of $ \Omega_{\chi,\eta}^{(0)} $ consists of four curves leaving $ z_1 $, two going to the negative part of the real line and two going to the positive part of the real line. By symmetry the same is true for $ z_2 $. The only exception is if $ \chi=0 $, that is $ z_1=z_2 \in \RR $, then $ z_1 $ is the only intersection of $ \partial \Omega_{\chi,\eta}^{(0)} $ and the negative part of the real line.
  
  Take $ \gamma_{\chi,\eta} $ as any closed simple curve inside $ \Omega_{\chi,\eta} $ that intersects $ z_1 $, $ z_2 $, the negative part of the real line and $ (1,\infty) $. Take two simple curves in the exterior of $ \Omega_{\chi,\eta}\backslash\{z_1,z_2\} $ going from $ z_1 $ to $ z_2 $ which intersect the positive part of the real line at some point bigger than one respectively smaller than one. Take $ \Gamma_{\chi,\eta} $ as the union of these two curves.
  
  Let $ w \in \Gamma_{\chi,\eta} $ and $ z \in \gamma_{\chi,\eta} $, then
  \begin{equation}
   \re F_{j_-}(w;\chi,\eta)-\re F_{j_-}(z;\chi,\eta) \leq 0,
  \end{equation}
  with equality only if $ w=z=z_i $, $ i\in \{1,2\} $, by definition of $ \Omega_{\chi,\eta} $, $ \gamma_{\chi,\eta} $ and $ \Gamma_{\chi,\eta} $. Moreover, if $ \chi \neq 0 $ then
  \begin{multline}
   \re F_{j_-}(w;\chi,\eta)-\re F_{j_+}(z;\chi,\eta) = (\re F_{j_-}(w;\chi,\eta)-\re F_{j_-}(z;\chi,\eta)) \\
   +(\re F_{j_-}(z;\chi,\eta)-\re F_{j_+}(z;\chi,\eta)) \leq -\eps,
  \end{multline}
  for some $ \eps>0 $ by definition of $ j_- $ and $ j_+ $ and Lemma \ref{lem:eigenvalues} \eqref{eq:lem_eigenvalues:inequality}.
 \end{proof}
 
 Let the orientation on $ \gamma_{\chi,\eta} $ and $ \Gamma_{\chi,\eta} $ be counter clockwise. See Figure \ref{fig:rough:first_deformation} for an example of the curves.

 \begin{proof}[Proof of Theorem \ref{thm:local_rough}]
 As in the calculations of the smooth kernel the formula in Theorem \ref{thm:finite_kernel} is in the right form if $ j_-=1 $, that is if $ \chi>0 $. If $ j_-=2 $ then we rewrite the formula for the kernel slightly. In that case move $ \gamma_1 $ through the cuts and back again, in a similar way as we did in the proof of the smooth kernel, to change $ \rho_1(w) $ to $ \rho_2(w) $. However this time we get a contribution when $ w=z $. We obtain
 \begin{multline}
  \left[K(2km,2\xi+i;2km',2\xi'+j)\right]_{i,j=0}^1 \\
  = \frac{\mathds{1}_{\chi \leq 0}-\mathds{1}_{m>m'}}{2\pi\i}\oint_{\gamma_{0,1}} \Phi(z)^{m-m'}z^{\xi'-\xi}\frac{\d z}{z} \\
  + \frac{(-1)^{\mathds{1}_{\chi\leq 0}}}{(2\pi\i)^2}\oint_{\gamma_1}\oint_{\gamma_{0,1}} \frac{w^{\xi'}}{z^{\xi+1}}\frac{(1-z^{-1})^{\frac{kN}{2}}}{(1-w^{-1})^{\frac{kN}{2}}}\rho_{j_-}(w)^{\frac{N}{2}-m'} \\
  \times E(w)
  \begin{pmatrix}
   \mathds{1}_{\chi>0} & 0 \\
   0 & \mathds{1}_{\chi\leq 0}
  \end{pmatrix}
  E(w)^{-1}\Phi(z)^{m-\frac{N}{2}}\frac{\d z\d w}{z-w}.
 \end{multline}
 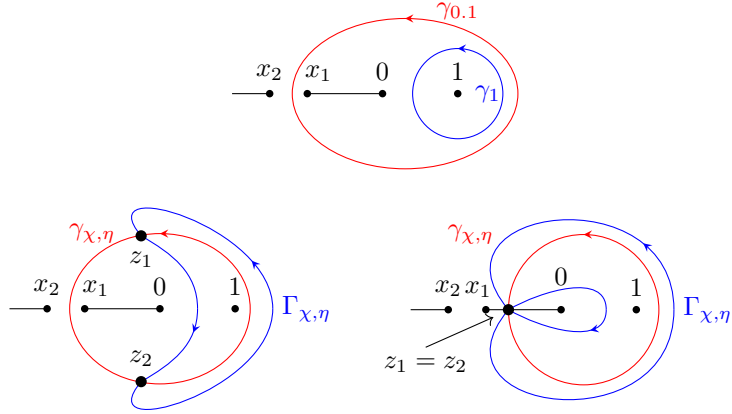
\begin{figure}[t]
 \begin{center}
  \begin{tikzpicture}[scale=1]
  \tikzset{-<-/.style={decoration={markings,mark=at position .25 with {\arrow{stealth}}},postaction={decorate}}}
   \draw (-1,0)--(0,0);
   \draw (-2,0)--(-1.5,0);
   \draw[-<-,blue] (1.,0) circle (.6);
   \draw[-<-,red] (.3,0) ellipse (1.5 and 1);
   \draw (1,0) node[circle,fill,inner sep=1pt,label=above:$1$]{};
   \draw (0,0) node[circle,fill,inner sep=1pt,label=above:$0$]{};
   \draw (-1,0) node[circle,fill,inner sep=1pt]{};
   \draw (-.85,.05) node[above]{$x_1$};
   \draw (-1.5,0) node[circle,fill,inner sep=1pt,label=above:$x_2$]{};
   \draw (1,1.1) node{$\color{red} \gamma_{0.1}$};
   \draw (1.4,0) node{$\color{blue} \gamma_{1}$};
  \end{tikzpicture}
  \\
  \begin{tikzpicture}[scale=1]
  \tikzset{-<-/.style={decoration={markings,mark=at position .25 with {\arrow{stealth}}},postaction={decorate}}}
   \draw (-1,0)--(0,0);
   \draw (-2,0)--(-1.5,0);
   \draw [-<-,blue] (.5,0) to[out=-90,in=30] (-.25,-.97);
   \draw [blue] (-.25,.97) to[out=-30,in=90] (.5,0);
   \draw [blue] (-.25,-.97) to[out=-130,in=-90,distance=1.1cm] (1.5,0);
   \draw [-<-,blue] (1.5,0) to[out=90,in=130,distance=1.1cm] (-.25,.97);
   \draw[-<-,red] (0,0) ellipse (1.2 and 1);
   \draw (1,0) node[circle,fill,inner sep=1pt,label=above:$1$]{};
   \draw (0,0) node[circle,fill,inner sep=1pt,label=above:$0$]{};
   \draw (-1,0) node[circle,fill,inner sep=1pt]{};
   \draw (-.85,.05) node[above]{$x_1$};
   \draw (-1.5,0) node[circle,fill,inner sep=1pt,label=above:$x_2$]{};
   \draw (-.25,.97) node[circle,fill,inner sep=1.5pt,label=below:$z_1$]{};
   \draw (-.25,-.97) node[circle,fill,inner sep=1.5pt,label=above:$z_2$]{};
   \draw (-.9,1) node{$ \color{red}\gamma_{\chi,\eta}$};
   \draw (1.95,0) node{$\color{blue} \Gamma_{\chi,\eta}$};
  \end{tikzpicture}
  \quad
  \begin{tikzpicture}[scale=1]
  \tikzset{-<-/.style={decoration={markings,mark=at position .25 with {\arrow{stealth}}},postaction={decorate}}}
   \draw (-1,0)--(0,0);
   \draw (-2,0)--(-1.5,0);
   \draw [blue] (-.7,0) to[out=30,in=90] (.6,0);
   \draw [-<-,blue] (.6,0) to[out=-90,in=-30] (-.7,0);
   \draw [blue] (-.7,0) to[out=-130,in=-90,distance=1.8cm] (1.5,0);
   \draw [-<-,blue] (1.5,0) to[out=90,in=130,distance=1.8cm] (-.7,0);
   \draw[-<-,red] (.3,0) ellipse (1. and 1);
   \draw (1,0) node[circle,fill,inner sep=1pt,label=above:$1$]{};
   \draw (0,0) node[circle,fill,inner sep=1pt]{};
   \draw (0,.2) node[above]{$0$};
   \draw (-1,0) node[circle,fill,inner sep=1pt]{};
   \draw (-1.1,.05) node[above]{$x_1$};
   \draw (-1.5,0) node[circle,fill,inner sep=1pt,label=above:$x_2$]{};
   \draw (-.7,0) node[circle,fill,inner sep=1.5pt]{};
   \draw [->] (-1.8,-.5) -- (-.9,-.1); 
   \draw (-1.8,-.5) node[below]{$z_1=z_2$};
   \draw (-1.2,1) node{$ \color{red} \gamma_{\chi,\eta}$};
   \draw (1.95,0) node{$ \color{blue} \Gamma_{\chi,\eta}$};
  \end{tikzpicture}
  \caption{Two examples of deformation of the contours in the steepest descent analysis of the correlation kernel at the rough region. \label{fig:rough:first_deformation}}
 \end{center}
\end{figure}

 Take the curve $ \gamma_{0,1} $ as $ \gamma_{\chi,\eta} $ and deform $ \gamma_1 $ to $ \Gamma_{\chi,\eta} $, defined in Lemma \ref{lem:rough:curves}, see Figure \ref{fig:rough:first_deformation}. The only contribution from this deformation comes from the pole at $ w=z $. Let $ \gamma_{z_1} $ be the part of $ \gamma_{\chi,\eta} $ going between $ z_1 $ and $ z_2 $ and intersecting the positive real line, with orientation from $ z_2 $ to $ z_1 $. By definition of $ z_1 $ and \eqref{eq:sec_smooth_rough:j} $ \gamma_{z_1} $ has the same orientation as $ \gamma_{\chi,\eta} $ if $ j_-=1 $ and opposite orientation if $ j_-=2 $. We obtain
 \begin{multline}
  \left[K(2km,2\xi+i;2km',2\xi'+j)\right]_{i,j=0}^1 \\
  = \frac{\mathds{1}_{\chi \leq 0}-\mathds{1}_{\kappa>\kappa'}}{2\pi\i}\oint_{\gamma_{\chi,\eta}} \Phi(z)^{\kappa-\kappa'}z^{\zeta'-\zeta}\frac{\d z}{z} \\
  +\frac{1}{2\pi\i}\int_{\gamma_{z_1}} z^{\zeta'-\zeta}\rho_{j_-}(z)^{\kappa-\kappa'}E(z)
  \begin{pmatrix}
   \mathds{1}_{\chi>0} & 0 \\
   0 & \mathds{1}_{\chi\leq 0}
  \end{pmatrix}
  E(z)^{-1}\frac{\d z}{z} \\
  + \frac{(-1)^{\mathds{1}_{\chi \leq 0}}}{(2\pi\i)^2}\oint_{\Gamma_{\chi,\eta}}\oint_{\gamma_{\chi,\eta}} \frac{w^{\frac{kN}{4}(\eta-1)+e_\eta+\zeta'}}{z^{\frac{kN}{4}(\eta-1)+e_\eta+\zeta+1}}\frac{(1-z^{-1})^{\frac{kN}{2}}}{(1-w^{-1})^{\frac{kN}{2}}}\rho_{j_-}(w)^{-\frac{N}{2}\chi-e_\chi-\kappa'} \\
  \times E(w)
  \begin{pmatrix}
   \mathds{1}_{\chi>0} & 0 \\
   0 & \mathds{1}_{\chi \leq 0}
  \end{pmatrix}
  E(w)^{-1}\Phi(z)^{\frac{N}{2}\chi+e_\chi+\kappa}\frac{\d z\d w}{z-w}.
 \end{multline}
 The sum of the first two term is precisely $ K^{(\chi,\eta)}_{rough} $ and the last term is small. 
 
 To see that the last term is small we proceed as in the smooth case. Divide $ \Phi $ into two terms. The integral of these two terms are
  \begin{multline}
  \frac{(-1)^{\mathds{1}_{\chi\leq 0}}}{(2\pi\i)^2}\oint_{\Gamma_{\chi,\eta}}\oint_{\gamma_{\chi,\eta}} \e^{N(F_{j_-}(w;\chi,\eta)-F_1(z;\chi,\eta))} \frac{w^{e_\eta+\zeta'}}{z^{e_\eta+\zeta+1}}\frac{\rho_1(z)^{e_\chi+\kappa}}{\rho_{j_-}(w)^{e_\chi+\kappa'}} \\
  \times E(w)
  \begin{pmatrix}
   \mathds{1}_{\chi>0} & 0 \\
   0 & \mathds{1}_{\chi \leq 0}
  \end{pmatrix}
  E(w)^{-1}E(z)
  \begin{pmatrix}
   1 & 0 \\
   0 & 0
  \end{pmatrix}
  E(z)^{-1}
  \frac{\d z\d w}{z-w}.
 \end{multline}
 and
  \begin{multline}
  \frac{(-1)^{\mathds{1}_{\chi \leq 0}}}{(2\pi\i)^2}\oint_{\Gamma_{\chi,\eta}}\oint_{\gamma_{\chi,\eta}} \e^{N(F_{j_-}(w;\chi,\eta)-F_2(z;\chi,\eta))} \frac{w^{e_\eta+\zeta'}}{z^{e_\eta+\zeta+1}}\frac{\rho_2(z)^{e_\chi+\kappa}}{\rho_{j_-}(w)^{e_\chi+\kappa'}} \\
  \times E(w)
  \begin{pmatrix}
   \mathds{1}_{\chi>0} & 0 \\
   0 & \mathds{1}_{\chi \leq 0}
  \end{pmatrix}
  E(w)^{-1}E(z)
  \begin{pmatrix}
   0 & 0 \\
   0 & 1
  \end{pmatrix}
  E(z)^{-1}
  \frac{\d z\d w}{z-w}.
 \end{multline} 
 By Lemma \ref{lem:rough:curves} one of the integrals decay exponentially with $ N $, if $ \chi \neq 0 $, and the other, after removing a neighborhood around $ z_1 $ and $ z_2 $, also decay exponentially with $ N $. The parts that are left, the part around $ z_1 $ and $ z_2 $, are of order $ N^{-\frac{1}{2}} $, since the order of the critical points are one. That the decay is $ N^{-\frac{1}{2}} $ is standard and follows by a Taylor expansion of $ F_{j_-} $ in a neighborhood of $ z_i $, $ i=1,2 $, and change of variables. We leave the details to the reader and refer to \cite{OR03}. If $ \chi=0 $ the functions $ F_1 $ and $ F_2 $ coincide, and the decay of both integrals are then $ N^{-\frac{1}{2}} $.
\end{proof}

\section{Proof of Theorem \ref{thm:height_function_expectation} and Corollary \ref{cor:burger}}\label{sec:proof_height_function}
 
 For the proof of Theorem \ref{thm:height_function_expectation} we will reuse many arguments from Section \ref{sec:proof_local_correlations}.

\begin{proof}[Proof of Theorem \ref{thm:height_function_expectation}]
 By definition of $ h $,
 \begin{multline}
  \EE\left[h\left(2km,2\xi\right)\right] = \sum_{y_0\geq2 \xi}\EE\left[\mathds{1}_{(2km,y_0)}\right] \\
  = \sum_{-1\geq\xi_0\geq \xi} \Tr \left[K(2km,2\xi_0+i;2km,2\xi_0+j)\right]_{i,j=0}^1.
 \end{multline}
 From Theorem \ref{thm:finite_kernel} we have the formula for the kernel. We take the sum inside the integrals and use $ \sum_{-1\geq\xi_0\geq \xi}\left(\frac{w}{z}\right)^{\xi_0} = \left(\frac{w}{z}\right)^{\xi}\frac{z}{z-w}-\frac{z}{z-w} $ to obtain
 \begin{multline}
  \EE\left[h\left(2km,2\xi\right)\right]
  =\Tr\left[\frac{1}{(2\pi\i)^2}\oint_{\gamma_1}\oint_{\gamma_{0,1}} \frac{w^{\xi}}{z^{\xi}}\frac{(1-z^{-1})^{\frac{kN}{2}}}{(1-w^{-1})^{\frac{kN}{2}}}\rho_1(w)^{\frac{N}{2}-m}\right. \\
  \left.\times E(w)
  \begin{pmatrix}
   1 & 0 \\
   0 & 0
  \end{pmatrix}
  E(w)^{-1}\Phi(z)^{m-\frac{N}{2}}\frac{\d z\d w}{(z-w)^2}\right] \\
  -\Tr\left[\frac{1}{(2\pi\i)^2}\oint_{\gamma_1}\oint_{\gamma_{0,1}}\frac{(1-z^{-1})^{\frac{kN}{2}}}{(1-w^{-1})^{\frac{kN}{2}}}\rho_1(w)^{\frac{N}{2}-m}\right. \\
  \left.\times E(w)
  \begin{pmatrix}
   1 & 0 \\
   0 & 0
  \end{pmatrix}
  E(w)^{-1}\Phi(z)^{m-\frac{N}{2}}\frac{\d z\d w}{(z-w)^2}\right].
 \end{multline}
 The second term is zero since the integrand with respect to $ z $ is analytic over infinity and hence in the exterior of $ \gamma_{0,1} $. Note that the formula inside the trace in the first term is very similar to the correlation kernel in Theorem \ref{thm:finite_kernel} with $ m'\mapsto m $, $ \xi'\mapsto \xi $ and $ \xi \mapsto \xi-1 $. The only difference is that the exponent of $ (z-w) $ is two instead of one. Taking the limit as $ N $ tends to infinity therefore follows the proof of Theorem \ref{thm:local_smooth} respectively \ref{thm:local_rough} almost word by word. The only major difference is the contribution from the pole at $ z=w $ when deforming the contours. 
 
 Let $ (\chi,\eta) \in \mathcal G_S $. Deform the contours $ \gamma_1 $ to $ \Gamma_{\chi,\eta} $ and $ \gamma_{0,1} $ to $ \gamma_{\chi,\eta} $ given in Lemma \ref{lem:smooth:curves}. We get a contribution from $ z=w $ while the rest of the integral tends to zero as $ N\to\infty $, by the proof of Theorem \ref{thm:local_smooth} with $ (z-w) $ interchanged with $ (z-w)^2 $. The contribution at $ z=w $ is
 \begin{multline}
  \Tr\left[\frac{-1}{2\pi\i}\oint_{\gamma_{\chi,\eta}}\frac{\d}{\d w}\left( \frac{w^{\xi}}{z^{\xi}}\frac{(1-z^{-1})^{\frac{kN}{2}}}{(1-w^{-1})^{\frac{kN}{2}}}\rho_1(w)^{\frac{N}{2}-m}\right.\right. \\
  \left.\left.\left.\times E(w)
  \begin{pmatrix}
   1 & 0 \\
   0 & 0
  \end{pmatrix}
  E(w)^{-1}\Phi(z)^{m-\frac{N}{2}}\right)\right|_{w=z}\d z\right].
 \end{multline}
 By taking the trace inside the integral and using $ \Tr E(z)
 \begin{pmatrix}
  1 & 0 \\
  0 & 0
 \end{pmatrix}
 E(z)^{-1} = 1 $ this becomes
 \begin{multline}
  -\left(\xi+\frac{kN}{2}\right)\frac{1}{2\pi\i}\oint_{\gamma_{\chi,\eta}}\frac{\d z}{z} + \frac{kN}{2}\frac{1}{2\pi\i}\oint_{\gamma_{\chi,\eta}}\frac{\d z}{z-1} \\
  -\left(\frac{N}{2}-m\right)\frac{1}{2\pi\i}\oint_{\gamma_{\chi,\eta}}\frac{\rho_1'(z)}{\rho_1(z)}\d z \\
  - \frac{1}{2\pi\i}\oint_{\gamma_{\chi,\eta}}\rho_1(z)^{\frac{N}{2}-m} \Tr\left[\left(E
  \begin{pmatrix}
   1 & 0 \\
   0 & 0
  \end{pmatrix}
  E^{-1}\right)'(z)\Phi(z)^{m-\frac{N}{2}}\right]\d z.
 \end{multline}
 The sum of the first two term is simply $ -\xi $ since $ \gamma_{\chi,\eta} $ goes around both zero and one. For the last term note first that
 \begin{multline}\label{eq:proof_height_function:product_rule}
  \left(E
  \begin{pmatrix}
   1 & 0 \\
   0 & 0
  \end{pmatrix}
  E^{-1}\right)'(z)
  = \left(\left(E
  \begin{pmatrix}
   1 & 0 \\
   0 & 0
  \end{pmatrix}
  E^{-1}\right)^2\right)'(z) \\
  = \left(E
  \begin{pmatrix}
   1 & 0 \\
   0 & 0
  \end{pmatrix}
  E^{-1}\right)'(z)\left(E
  \begin{pmatrix}
   1 & 0 \\
   0 & 0
  \end{pmatrix}
  E^{-1}\right)(z) \\
  + \left(E
  \begin{pmatrix}
   1 & 0 \\
   0 & 0
  \end{pmatrix}
  E^{-1}\right)(z)\left(E
  \begin{pmatrix}
   1 & 0 \\
   0 & 0
  \end{pmatrix}
  E^{-1}\right)'(z).
 \end{multline}
 By \eqref{eq:proof_height_function:product_rule} and the cyclic property of the trace the last term becomes
 \begin{multline}
  -\frac{2}{2\pi\i}\oint_{\gamma_{\chi,\eta}}\Tr\left[\left(E
  \begin{pmatrix}
   1 & 0 \\
   0 & 0
  \end{pmatrix}
  E^{-1}\right)'(z)\left(E
  \begin{pmatrix}
   1 & 0 \\
   0 & 0
  \end{pmatrix}
  E^{-1}\right)(z)\right]\d z \\
  =-\frac{1}{2\pi\i}\oint_{\gamma_{1,\chi,\eta}}\Tr\left[\left(E
  \begin{pmatrix}
   1 & 0 \\
   0 & 0
  \end{pmatrix}
  E^{-1}\right)'(z)\right]\d z = 0,
 \end{multline}
 where in the first equality \eqref{eq:proof_height_function:product_rule} is used and the last equality follows by changing the order of the derivative and the trace.
 
 To summarize,
 \begin{equation}
  \EE\left[h\left(2km,2\xi\right)\right] = -\xi-\left(\frac{N}{2}-m\right)\frac{1}{2\pi\i}\oint_{\gamma_{\chi,\eta}}\frac{\rho_1'(z)}{\rho_1(z)}\d z +o(1),
 \end{equation}
 as $ N \to \infty $. Hence
 \begin{equation}
  \EE\left[\frac{2}{kN}h^{(N)}(\chi,\eta)\right] = -\frac{1}{2}(\eta-1)+\frac{\chi}{k}\frac{1}{2\pi \i}\oint_{\gamma_{\chi,\eta}}\frac{\rho_1'(z)}{\rho_1(z)}\d z +\Ordo(N^{-1}),
 \end{equation}
 as $ N\to \infty $. 
 
 To finalize the proof of the first statement we prove the equality
 \begin{equation}\label{eq:proof_height_function:argument_principle}
  \frac{1}{2\pi \i}\oint_{\gamma_{\chi,\eta}}\frac{\rho_1'(z)}{\rho_1(z)}\d z = n_\ell-k.
 \end{equation}
 
 Assume first that $ p $, recall \eqref{eq:sec_model:def_p}, has only simple zeros. Deform $ \gamma_{\chi,\eta} $ to $ n_\ell +1 $ simple closed curves $ \gamma_i $ for $ i=0,\dots,n_\ell $ such that $ \gamma_{n_\ell} $ goes around $ 1 $ and no cuts, and $ \gamma_i $, if $ i\neq n_\ell $, goes around the cut $ (x_{2i+1},x_{2i}) $. The contribution from the integral over $ \gamma_{n_\ell} $ is $ -k $ by the argument principle, recall that $ \rho_1 $ has a pole of order $ k $ at $ z=1 $. We show below that the contribution of the integral over $ \gamma_i $ if $ i\neq n_\ell $ is one.
 
 Let $ [0,1]\ni t \mapsto \gamma_i(t) $ be a parametrization of $ \gamma_i $. Then
 \begin{equation}
  \frac{1}{2\pi \i}\oint_{\gamma_i}\frac{\rho_1'(z)}{\rho_1(z)}\d z = \frac{1}{2\pi \i}\int_0^1\frac{\rho_1'(\gamma_i(t))}{\rho_1(\gamma_i(t))}\gamma_i'(t)\d t = \text{Ind}_{\rho_1\circ \gamma_i}(0),
 \end{equation}
 where $ \text{Ind}_{\rho_1\circ \gamma_i}(0) $ is the index of $ 0 $ with respect to $ \rho_1 \circ \gamma_i $, that is, the number of times $ \rho_1 \circ \gamma_i $ wind around zero. By \eqref{eq:sec_eigenvalues:local_eigenvalue} and \eqref{eq:sec_zeros:ppm} it follows that $ \rho_1(x_i) = \pm 1 $ if $ x_i $ is a zero of $ p_\mp $. Along the cut $ (x_{2i+1},x_{2i}) $ the functions $ (p_0^{1/2})_\pm$, the limit of $ p_0^{1/2} $ from the upper respectively lower half plane, take values in $ \i\RR_\pm $ if $ i $ is even and in $ \i\RR_\mp $ if $ i $ odd. We assume for now that $ k $ is odd. By the proof of Proposition \ref{prop:zeros_of_p} both $ x_{2i-1} $ and $ x_{2i} $ are zeros of $ p_+ $ if $ i $ is even and zeros of $ p_- $ if $ i $ is odd. In particular $ \rho_1 $ is $ 1 $ at one end point and $ -1 $ at the other end point of each cut. The image of the curve $ (x_{2i+1},x_{2i}) \ni z \mapsto (\rho_i(z))_+ $ oriented from $ x_{2i} $ to $ x_{2i+1} $ together with the image of the curve $ (x_{2i+1},x_{2i}) \ni z \mapsto (\rho_i(z))_- $ oriented from $ x_{2i+1} $ to $ x_{2i} $ is a simple closed curve going around zero ones counter clockwise. In fact, by Lemma \eqref{lem:eigenvalues} \eqref{eq:lem_eigenvalues:at_cuts} the image is precisely the unit circle. That it goes around zero only ones follows since $ \i \im (\rho_1)_\pm = \frac{(p_0^{1/2})_\pm}{(z-1)^k} $ which has no zeros in $ (x_{2i+1},x_{2i}) $. Hence, by taking $ \gamma_i $ close to the cut $ (x_{2i+1},x_{2i}) $ we see that
 \begin{equation}\label{eq:proof_height_function:integral_cut}
  \frac{1}{2\pi \i}\oint_{\gamma_i}\frac{\rho_1'(z)}{\rho_1(z)}\d z=1.
 \end{equation}
  If instead $ k $ is even, then $ x_{2i-1} $ and $ x_{2i} $ is a zero of $ p_- $ if $ i $ is even and a zero of $ p_+ $ if $ i $ is odd. But going back and forth as above still produces a close curve going around zero ones counter clockwise. So we recover \eqref{eq:proof_height_function:integral_cut}. Summing up all the curves proves \eqref{eq:proof_height_function:argument_principle}.
 
 By a continuity argument \eqref{eq:proof_height_function:argument_principle} still holds if $ p $ has double zeros. Which proves the first statement. 
 
 Let $ (\chi,\eta) \in \mathcal G_R $. Deform $ \gamma_1 $ and $ \gamma_{0,1} $ to $ \Gamma_{\chi,\eta} $ respectively $ \gamma_{\chi,\eta} $ given in Lemma \ref{lem:rough:curves}. As before, but by following the proof of Theorem \ref{thm:local_rough} instead of the proof of Theorem \ref{thm:local_smooth}, the contribution comes from the pole at $ z=w $. Note that the error term now is constant, since the exponent of $ (z-w) $ is two instead of one, this is however enough since we divide the expectation of the height function by a factor $ N $. The contribution at $ z=w $ is
 \begin{multline}
  \Tr\left[\frac{-1}{2\pi\i}\int_{\gamma_{z_1}}\frac{\d}{\d w}\left( \frac{w^{\xi}}{z^{\xi}}\frac{(1-z^{-1})^{\frac{kN}{2}}}{(1-w^{-1})^{\frac{kN}{2}}}\rho_1(w)^{\frac{N}{2}-m}\right.\right. \\
  \left.\left.\left.\times E(w)
  \begin{pmatrix}
   1 & 0 \\
   0 & 0
  \end{pmatrix}
  E(w)^{-1}\Phi(z)^{m-\frac{N}{2}}\right)\right|_{w=z}\d z\right],
 \end{multline}
 where $ \gamma_{z_1} $ is the part of $ \gamma_{\chi,\eta} $ that goes from $ z_2 $ to $ z_1 $ and intersects the positive part of the real line. To simplify this we use the same computations as when $ (\chi,\eta) $ is in a smooth component, which is possible since we barely, in that situation, used the specific curve we integrated over. We get
 \begin{multline}
  \EE\left[h\left(2km,2\xi\right)\right] = -\left(\xi+\frac{kN}{2}\right)\frac{1}{2\pi\i}\int_{\gamma_{z_1}}\frac{\d z}{z} + \frac{kN}{2}\frac{1}{2\pi\i}\int_{\gamma_{z_1}}\frac{\d z}{z-1} \\
  -\left(\frac{N}{2}-m\right)\frac{1}{2\pi\i}\int_{\gamma_{z_1}}\frac{\rho_1'(z)}{\rho_1(z)}\d z +\Ordo(1),
 \end{multline}
 as $ N \to \infty $. Hence
 \begin{multline}
  \EE\left[\frac{2}{kN}h^{(N)}(\chi,\eta)\right]\to-\frac{1}{\pi\i k}\int_{\gamma_{z_1}}F_1'(z;\chi,\eta)\d z \\ 
  = -\frac{1}{\pi\i k}\int_{\gamma_{z_1}}\frac{\d}{\d z}\left(F(z,p_0(z)^\frac{1}{2};\chi,\eta)\right)\d z
 \end{multline}
  as $ N\to \infty $.
\end{proof}

\begin{proof}[Proof of Corollary \ref{cor:burger}]
 Since
 \begin{equation}
  -\frac{1}{\pi\i k}\int_{\gamma_{z_1}}\frac{\d}{\d z}\left(F(z,p_0(z)^\frac{1}{2};\chi,\eta)\right)\d z = -\frac{1}{\pi\i k}\left(F_1(z_1;\chi,\eta)-F_1(\bar z_1;\chi,\eta)\right),
 \end{equation}
 it follows, by differentiating the right hand side and using that $ F_1'(z_i;\chi,\eta)=0 $ for $ i=1,2 $, that
 \begin{equation}
  \nabla \EE\left[h^{(\infty)}(kx,2y)\right] = \frac{1}{\pi}\left(\im \log \rho_1(L_1(\chi,\eta)),-\im \log L_1(\chi,\eta)\right),
 \end{equation}
 which proves the first statement. The other statement follows by a direct verification using
 \begin{equation}
  \frac{\partial L_1}{\partial \chi} = -\frac{2}{k}\frac{L_1\rho_1'(L_1)}{\rho_1(L_1)}\frac{\partial L_1}{\partial \eta},
 \end{equation}
 which follows by differentiating the equation $ F_1'(L_1(\chi,\eta);\chi,\eta)=0 $ ones with respect to $ \chi $ and ones with respect to $ \eta $ and recalling that the zero is simple. The last equality is true since $ \rho_1 $ is an eigenvalue of $ \Phi $.
\end{proof}

\section{Proof of Theorem \ref{thm:finite_kernel}}\label{sec:proof_main_theorem}

The proof of Theorem \ref{thm:finite_kernel} is based on \cite[Theorem 3.1]{BD19} and also the method discussed in the same paper how to explicitly obtain the associated Wiener-Hopf factorization. Loosely speaking \cite[Theorem 3.1]{BD19} states the following. Consider a determinantal point process defined by a probability measure of the form \eqref{eq:sec_model:measure} with $ \phi=\prod \phi_m $. Assume $ \phi $ admits a factorization $ \phi=\phi_+\phi_-=\tilde \phi_-\tilde \phi_+ $ where $ \phi_+,\tilde \phi_+ $ and $ \phi_-\tilde \phi_- $ are analytic and non-singular inside respectively outside the unit circle and $ \phi_-, \tilde \phi_- $ behaves as $ z^{M}I $ as $ z\to \infty $. Then the correlation kernel has a limit as $ n \to \infty $ and the limiting kernel is given by
\begin{multline}	
\left[ K(m, p\xi +j; m', p \xi'+i) \right]_{i,j=0}^{p-1} = - \frac{\mathds{1}_{m > m'}}{2\pi \i} \oint_{|z|=1} \prod_{j=m+1}^{m'}\phi_{j}(z) z^{\xi'-\xi} \frac{\d z}{z} \\
+  \frac{1}{(2\pi \i)^2} \oiint_{|w|<|z|} \prod_{j=m'+1}^{2kN}\phi_{j}(w) \widetilde \phi^{-1}_+(w)\widetilde \phi^{-1}_-(z) \prod_{j=1}^m\phi_{j}(z)  \frac{w^{\xi'-M}\d z \d w}{z^{\xi-M+1} (z-w)},
\end{multline}
where $ p $ is the periodicity of the weighting in the vertical direction and $ pM $ is the height difference of the start and endpoints. In our case $ p=2 $ and $ pM = -kN $.
 
\begin{proof}[Proof of Theorem \ref{thm:finite_kernel}]
As mentioned the proof is based on the technique developed in \cite{BD19}, where the argument is thoroughly explained. For a more detailed description of the argument we therefore refer to \cite{BD19} and in particular the proof of Theorem 5.2 which is very similar to the current proof. 

With a Wiener-Hopf type factorization of $ \phi(z) = \Phi(z)^N $, \cite[Theorem 3.1]{BD19} applies directly. However such factorization is not possible, since $ \phi $ is singular and has singularities on the unit circle. We therefore introduce an extra parameter $ 0<a<1 $ in the model, which we later take to $ 1 $. That is, consider the measure defined by \eqref{eq:sec_model:measure} but with $ \phi_{a,m} $ instead of $ \phi_m $ where
\begin{multline}
\phi_{a,2m-1}(z)  =  
\begin{pmatrix}
\gamma_m & a^{-1}\alpha_m z^{-1}\\
a^{-1}\alpha_m^{-1} & \gamma_m^{-1}
\end{pmatrix}, \\
\phi_{a,2m}(z) = \frac{1}{1-a^2z^{-1}} 
\begin{pmatrix}
1 & a\beta_m z^{-1}\\
a\beta_m^{-1} & 1
\end{pmatrix}, 
\end{multline}
for $ m=1,2,\dots,kN $.

To obtain the factorization of $ \phi_a=\prod_{m=1}^{kN}\phi_{a,2m-1}\phi_{a,2m} $ we use the method discussed in \cite[Section 4]{BD19}. That is, we move all the $ \phi_{a,2m-1} $ to one side and the $ \phi_{a,2m} $ to the other side, for $ m=1,\dots,kN $ using a switching rule. The switching rule we use here is slightly different from the one proposed in \cite{BD19}, but the argument in \cite[Section 4]{BD19} still works in the same way. What we use is the algebraic fact that
\begin{equation}\label{eq:proof_main_theorem:switching}
 \begin{pmatrix}
  a & b z^{-1} \\
  c & a^{-1} 
 \end{pmatrix}
 \begin{pmatrix}
  \alpha & \beta z^{-1} \\
  \gamma & \alpha^{-1} 
 \end{pmatrix}
 =
  \begin{pmatrix}
  a & \gamma x z^{-1} \\
  \beta x^{-1} & a^{-1} 
 \end{pmatrix}
 \begin{pmatrix}
  \alpha & c x z^{-1} \\
  bx^{-1} & \alpha^{-1} 
 \end{pmatrix},
\end{equation}
where $ x = \frac{a\beta+b\alpha^{-1}}{a c+\alpha^{-1}\gamma} $. What is important here is that the determinant of the first matrix on the left hand side is equal to the determinant of the second matrix in the right hand side, and vice versa. Note also that in the case $ c=b^{-1} $ and $ \gamma=\beta^{-1} $ the equality \eqref{eq:proof_main_theorem:switching} becomes the trivial equality
\begin{equation}\label{eq:proof_main_theorem:trivial_switching}
 \begin{pmatrix}
  a & b z^{-1} \\
  b^{-1} & a^{-1} 
 \end{pmatrix}
 \begin{pmatrix}
  \alpha & \beta z^{-1} \\
  \beta^{-1} & \alpha^{-1} 
 \end{pmatrix}
 =
 \begin{pmatrix}
  a & b z^{-1} \\
  b^{-1} & a^{-1} 
 \end{pmatrix}
 \begin{pmatrix}
  \alpha & \beta z^{-1} \\
  \beta^{-1} & \alpha^{-1} 
 \end{pmatrix}.
\end{equation}
Applying \eqref{eq:proof_main_theorem:switching} to $ \phi_{a,2m-1}$ and $ \phi_{a,2m}  $ we get $ \phi_{a,2m-1}\phi_{a,2m} = \tilde \phi_{a,2m}\tilde\phi_{a,2m-1} $, where $ \tilde \phi_{a,j} $ is analytic and non-singular exactly where $ \phi_{a,j} $ is analytic and non-singular. We use this switching rule pairwise on the factors in $ \phi_a $. We do it $ kN $ times, at $ i$:th time we switch $ kN+1-i $ pairs, by leaving the first and last $ i-1 $ factors untouched. For a more thorough explanation see \cite[Section 4]{BD19}. Doing this two times, first taking the $ \phi_{a,m} $ with $ m $ even to the left and the second time to the right, we get
\begin{equation}
 \phi_a = \prod_{m=1,\text{even}}^{2kN}\phi_{a,m}'\prod_{m=1,\text{odd}}^{2kN}\phi_{a,m}'=\prod_{m=1,\text{odd}}^{2kN}\phi_{a,m}''\prod_{m=1,\text{even}}^{2kN}\phi_{a,m}''.
\end{equation}
Here $ \phi_{a,m} $, $ \phi_{a,m}' $ and $ \phi_{a,m}'' $ are analytic and non-singular simultaneously.

Set
\begin{equation}
 \phi_{a,+}(z) = z^\frac{kN}{2}\prod_{m=1,\text{odd}}^{2kN}\phi''_{a,m}(z)C, \quad  \phi_{a,-}(z) = C^{-1}z^{-\frac{kN}{2}}\prod_{m=1,\text{even}}^{2kN}\phi''_{a,m}(z),
\end{equation}
and
\begin{equation}
 \widetilde \phi_{a,+}(z) = \widetilde Cz^\frac{kN}{2}\prod_{m=1,\text{odd}}^{2kN}\phi'_{a,m}(z), \quad  \widetilde \phi_{a,-}(z) = z^{-\frac{kN}{2}}\prod_{m=1,\text{even}}^{2kN}\phi'_{a,m}(z)\widetilde C^{-1}.
\end{equation}
Here $ C $ and $ \widetilde C $ are normalizing factors so $ \phi_{a,-}(z),\widetilde \phi_{a,-}(z) \sim z^{-\frac{kN}{2}}I $ as $ z \to \infty $. Since $ \phi'_{a,m} $ and $ \phi''_{a,m} $ has the same form as $ \phi_{a,m} $ we see that $ \phi_{a,+} $ and $ \widetilde \phi_{a,+} $ are analytic and non-singular in $ \DD $ and continuous up to the boundary, and $ \phi_{a,-} $ and $ \widetilde \phi_{a,-} $ are analytic and non-singular  in $ \overline \DD^c $ and continuous up to the boundary. Moreover 
	\begin{equation}
	 \phi_{a,+}\phi_{a,-} = \widetilde \phi_{a,-}\widetilde \phi_{a,+} = \phi_a = \Phi_a^N,
	\end{equation}
	where
	\begin{equation}
	 \Phi_a = \prod_{i=1}^{2k}\phi_{a,m}.
	\end{equation}
    This is what we need for \cite[Theorem 3.1]{BD19}. However the construction also implies a few more properties we need later on,
	\begin{enumerate}[(i)]
	 \item $ \widetilde \phi_{a,-} $ is analytic in $ \CC\backslash \{a^2\} $, \label{eq:proof_main_theorem:prop_factorization_1}
	 \item $ \widetilde \phi_{a,+}^{-1} $ is analytic in $ \CC\backslash \{a^{-2}\} $,\label{eq:proof_main_theorem:prop_factorization_2}
	 \item \label{eq:proof_main_theorem:prop_factorization_3}
	 and
	 \begin{equation} 
	  \prod_{m=1,\text{even}}^{2kN}\phi'_{a,m}(z) \to (1-z^{-1})^{-\frac{kN}{2}}\Phi(z)^\frac{N}{2},
	 \end{equation}
	 as $ a \to 1 $.
	\end{enumerate}
	Equation \eqref{eq:proof_main_theorem:prop_factorization_1} and \eqref{eq:proof_main_theorem:prop_factorization_2} follows by construction of the factors and since the correspondent statement is true for $ \phi_{a,m} $, except at $ z=0 $, where it still is true for the product due to the factor $ z^\frac{kN}{2} $. For \eqref{eq:proof_main_theorem:prop_factorization_3}, note that the observation \eqref{eq:proof_main_theorem:trivial_switching} implies
\begin{equation} 
 \prod_{m=1,\text{even}}^{2kN}\phi'_{1,m}(z) = (1-z^{-1})^{-\frac{kN}{2}}\Phi(z)^\frac{N}{2}.
\end{equation}
So the limit follows since the switching rule is continuous in $ a $.
	
With the above factorization we use \cite[Theorem 3.1]{BD19}. That is, the model defined above converges to a determinantal point process as $ n \to \infty $. We are interested in the part $ \xi,\xi'\geq -\frac{kN}{2} $. Then the correlation kernel is given by 
	\begin{multline}\label{eq:proof_main_theorem:a_kernel}
	\left[K_{top}^{(a)}(2km,2\xi+i;2km',2\xi'+j)\right]_{i,j=0}^1 \\
	= -\frac{\mathds{1}_{m>m'}}{2\pi\i}\oint_{\gamma_{0,1}} \Phi_a(z)^{m-m'}z^{\xi'-\xi}\frac{\d z}{z} \\
	+ \frac{1}{(2\pi\i)^2}\oint_{\gamma_a}\oint_{\gamma_{0,1,a}} \frac{w^{\xi'}}{z^{\xi+1}} \Phi_a(w)^{N-m'} \left(\prod_{k=1,\text{odd}}^{2kN}\phi'_{a,m}(w)\right)^{-1} \\
	\times\left(\prod_{k=1,\text{even}}^{2kN}\phi'_{a,m}(z)\right)^{-1}\Phi_a(z)^m\frac{\d z\d w}{z-w},
	\end{multline}
	where $ \gamma_a $ is a simple closed curve with $ a^2 $ in the interior and $ a^{-2} $ in the exterior, and $ \gamma_{0,1,a} $ is a simple closed curve with $ 0 $, $ 1 $ and $ \gamma_a $ in the interior. That $ \xi'\geq -\frac{kN}{2} $ is required in order to move the curve of the first integrand over zero to $ \gamma_a $. We would like to take $ a \to 1 $. The problem however is that the integrand with respect to $ w $ is singular both at $ a^2 $ and $ a^{-2} $ which lies on different sides of $ \gamma_a $, see Figure \ref{fig:contours}, which complicates the limit procedure. To solve this problem we go to the eigenvalues.
	
	\begin{figure}[t]
	 \begin{center}
	  \begin{tikzpicture}[scale=1]
	   \tikzset{-<-/.style={decoration={markings,mark=at position .25 with {\arrow{stealth}}},postaction={decorate}}}
	   \draw[-<-,blue] (.9,0) circle (.7);
	   \draw[-<-,red] (0,0) ellipse (3 and 1);
	   \draw (.55,0) node[circle,fill,inner sep=1pt,label=below:$a^2$]{};
	   \draw (1,0) node[circle,fill,inner sep=1pt,label=above:$1$]{};
	   \draw (1.818,0) node[circle,fill,inner sep=1pt,label=below:$a^{-2}$]{};
	   \draw (0,0) node[circle,fill,inner sep=1pt,label=above:$0$]{};
	   \draw (1.5,.6) node{$ \gamma_a$};
	   \draw (3,.6) node{$ \gamma_{0,1,a}$};
	  \end{tikzpicture}
	 \end{center}
	\caption{The contours of integration. The point $ a^2 $ is a pole for $ \rho_{a,1} $ and $ a^{-2} $ is a zero for $ \rho_{a,2} $.
	\label{fig:contours}}
	\end{figure}
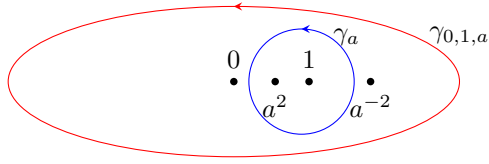
	
	Let $ \Phi_a(w) = E_a(w)\Lambda_a(w)E_a(w)^{-1} $ be an eigenvalue decomposition of $ \Phi_a $ with eigenvalues $ \rho_{a,1} $ and $ \rho_{a,2} $.

	\emph{Claim:}
	There is a neighborhood $ \mathcal B $ of $ z=1 $ containing $ a^2 $ and $ a^{-2} $ such that the eigenvalue $ \rho_{a,1} $ is analytic and non-zero in $ \mathcal B $ except at $ a^2 $. Moreover we can take $ \gamma_a $ in $ \mathcal B $ in such a way that $ \rho_{a,2} $ is analytic on and in the interior of $ \gamma_a $.
	
	We assume the claim for now and prove it later.	Use the eigenvalue decomposition  to write $ \Phi_a^{N-m'} $ as a sum,
	\begin{multline}
	\Phi_a(w)^{N-m'} = \rho_{a,1}(w)^{N-m'}E_a(w)
	\begin{pmatrix}
	1 & 0 \\
	0 & 0
	\end{pmatrix}
	E_a(w)^{-1} \\
	+ \rho_{a,2}(w)^{N-m'}E_a(w)
	\begin{pmatrix}
	0 & 0 \\
	0 & 1
	\end{pmatrix}
	E_a(w)^{-1},
	\end{multline}
	and use this in \eqref{eq:proof_main_theorem:a_kernel}. By \eqref{eq:proof_main_theorem:prop_factorization_2} and the claim, the term with $ \rho_{a,2} $ vanishes, since the integrand with respect to $ w $ is analytic inside $ \gamma_a $. For the second term, the one with $ \rho_{a,1} $, we write the integrand with respect to $ w $, recall that $ \Phi_a^N\tilde \phi_{a,+}^{-1} = \tilde \phi_{a,-} $, as
	\begin{multline}
	\rho_{a,1}(w)^{N-m'}E_a(w)
	\begin{pmatrix}
	1 & 0 \\
	0 & 0
	\end{pmatrix}
	E_a(w)^{-1}
	\left(\prod_{k=1,\text{odd}}^{2kN}\phi'_{a,m}(w)\right)^{-1} = \\
	\rho_{a,1}(w)^{-m'}E_a(w)
	\begin{pmatrix}
	1 & 0 \\
	0 & 0
	\end{pmatrix}
	E_a(w)^{-1}
	\prod_{k=1,\text{even}}^{2kN}\phi'_{a,m}(w),
	\end{multline}
	which, by \eqref{eq:proof_main_theorem:prop_factorization_1} and the claim is analytic over $ a^{-2} $. Move the contour $ \gamma_a $ in \eqref{eq:proof_main_theorem:a_kernel} to a contour $ \gamma_1 $ containing $ a^2 $, $1$ and $ a^{-2}$ in the interior. By analyticity of the kernel we may, and we do, take $ a \to 1 $. That is
	\begin{multline}
	\left[K_{top}^{(a)}(2km,2\xi+i;2km',2\xi'+j)\right]_{i,j=0}^1 \\
	= -\frac{\mathds{1}_{m>m'}}{2\pi\i}\oint_{\gamma_{0,1}} \Phi_a(z)^{m-m'}z^{\xi'-\xi}\frac{\d z}{z} \\
	+ \frac{1}{(2\pi\i)^2}\oint_{\gamma_1}\oint_{\gamma_{0,1,a}} \frac{w^{\xi'}}{z^{\xi+1}}\rho_{a,1}(w)^{-m'}E_a(w)
	\begin{pmatrix}
	1 & 0 \\
	0 & 0
	\end{pmatrix}
	E_a(w)^{-1}
	\prod_{k=1,\text{even}}^{2kN}\phi'_{a,m}(w) \\
	\times\left(\prod_{k=1,\text{even}}^{2kN}\phi'_{a,m}(z)\right)^{-1}\Phi_a(z)^m\frac{\d z\d w}{z-w}, \\
	\to -\frac{\mathds{1}_{m>m'}}{2\pi\i}\oint_{\gamma_{0,1}} \Phi(z)^{m-m'}z^{\xi'-\xi}\frac{\d z}{z} \\
	+ \frac{1}{(2\pi\i)^2}\oint_{\gamma_1}\oint_{\gamma_{0,1}} \frac{w^{\xi'}}{z^{\xi+1}}\frac{(1-z^{-1})^\frac{kN}{2}}{(1-w^{-1})^\frac{kN}{2}}\rho_1(w)^{-m'} E(w)
	\begin{pmatrix}
	1 & 0 \\
	0 & 0
	\end{pmatrix}
	E(w)^{-1}
	\Phi(w)^{\frac{N}{2}} \\
	\times\Phi(z)^{-\frac{N}{2}}\Phi(z)^m\frac{\d z\d w}{z-w},
	\end{multline}
	as $ a \to 1 $, where we used \eqref{eq:proof_main_theorem:prop_factorization_3} and that $ \Phi_a, \rho_{a,1}, E_a $ tends to $ \Phi, \rho_{1}, E $ respectively as $ a  \to 1 $. 

	What is left to show is the claim. Note first that
	\begin{equation}\label{eq:proof_main_theorem:determinant}
	 \rho_{a,1}(z)\rho_{a,2}(z) = \det \Phi_a(z) = \left(\frac{1-a^{-2}z^{-1}}{1-a^2z^{-1}}\right)^k.
	\end{equation}
	We can express the eigenvalues explicitly as
	\begin{equation}
 	 \rho_{a,1}(z) = \frac{1}{2}\frac{\Tr \left((1-a^2z^{-1})^k\Phi_a(z)\right)+p_a(z)^\frac{1}{2}}{(1-a^2z^{-1})^k}
 	\end{equation}
 	and
	\begin{equation}
 	 \rho_{a,2}(z) = \frac{1}{2}\frac{\Tr \left((1-a^2z^{-1})^k\Phi_a(z)\right)-p_a(z)^\frac{1}{2}}{(1-a^2z^{-1})^k},
 	\end{equation}
 	where
 	\begin{equation}
 	 p_a(z) = \Tr \left((1-a^2z^{-1})^k\Phi_a(z)\right)^2-4(1-a^2z^{-1})^k(1-a^{-2}z^{-1})^k.
 	\end{equation}
 	This formula does not look to appealing, but it is sufficient for our purposes. In fact, as long as we are away from the branch coming from the square root and the possible pole at $ z=a^2 $ both $ \rho_{a,1} $ and $ \rho_{a,2} $ are analytic. 
 	
 	For $ z>0 $ we get a lower bound on the trace by ignoring the part of the anti-diagonal,
 	\begin{equation}
 	 \Tr\left( (1-a^2z^{-1})^k\Phi_a(z)\right) \geq \Tr \prod_{m=1}^{k}
 	 \begin{pmatrix}
 	  \gamma_m & 0 \\
 	  0 & \gamma_m^{-1}
 	 \end{pmatrix}
 	 = \gamma+\gamma^{-1}\geq 2,
 	\end{equation}
 	where $ \gamma = \prod \gamma_m $. So for $ a $ close enough to $ 1 $, $ p_a(1)>0 $ which tells us there is a neighborhood $ \mathcal B $ of $ z=1 $ so that for $ a $ close enough to one $ p_a(z)>0 $ for $ z \in \mathcal B \cap \RR $. That is, we can take $ \mathcal B $, by possible making it smaller, such that $ p_a^\frac{1}{2} $ is analytic in $ \mathcal B $ and $ p_a(z)^\frac{1}{2}>0$ for $ \mathcal B \cap \RR $. Take $ a $ so close to one that $ \mathcal B $ contains $ a^2 $ and $ a^{-2} $. 
	 
	  Now, by the above, both terms in the numerator of $ \rho_{a,1} $ is strictly positive for real $ z\in \mathcal B $ and in particular non-zero at $ z=a^{-2} $ and $ z=a^2 $. By \eqref{eq:proof_main_theorem:determinant} $ \rho_{a,1} $ is non-zero in $ \mathcal B $. We conclude that $ \rho_{a,1} $ is analytic and non-zero in $ \mathcal B $ except at $ a^2 $, where it has a pole of order $ k $. By \eqref{eq:proof_main_theorem:determinant} $ \rho_{a,2} $ is analytic in all of $ \mathcal B $, also at $ a^2 $, so it is enough to take $ \gamma_a $ inside $ \mathcal B $. This proves the claim and hence the theorem.
\end{proof}

 \appendix
 
 \section{Gauge transformation of the weighting}
 
 We give the relation between the weights on the edges and the weights on the non-intersecting paths. This relation goes through a gauge transformation.
 
 For $ i=1,\dots,k $ set
\begin{multline}
 \alpha_i=\frac{a_{i-1,kN+i-3}}{a_{i,kN+i-3}}\frac{a_{i-1,kN+i-2}}{a_{i-2,kN+i-2}}, \quad \beta_i=\frac{a_{i-1,kN+i-2}}{a_{i-1,kN+i-1}}\frac{a_{i,kN+i-2}}{a_{i,kN+i-3}}, \\
 \gamma_i=\frac{a_{i,kN+i-4}}{a_{i+1,kN+i-4}}\frac{a_{i,kN+i-3}}{a_{i-1,kN+i-3}} \text{ and } \delta_i=\frac{a_{i,kN+i-3}}{a_{i,kN+i-2}}\frac{a_{i+1,kN+i-3}}{a_{i+1,kN+i-4}}.
\end{multline}
The weighting of the $ 2\times k $-periodic Aztec diamond, defined in Sections \ref{sec:aztec_diamond}, is equivalent to the weighting
\begin{multline}
 \alpha_i \text{ on the edge } \{(i-1,kN+i-2),(i,kN+i-2)\}+2n(1,-1)+kn'(1,1), \\ 
 \beta_i \text{ on the edge } \{(i,kN+i-2),(i,kN+i-1)\}+2n(1,-1)+kn'(1,1), \\
 \gamma_i \text{ on the edge } \{(i,kN+i-3),(i+1,kN+i-3)\}+2n(1,-1)+kn'(1,1), \\ 
 \delta_i \text{ on the edge } \{(i+1,kN+i-3),(i+1,kN+i-2)\}+2n(1,-1)+kn'(1,1),
\end{multline}
for $ i=1,\dots,k $ and all $ n,n'\in \ZZ $ and with the weight one on all other edges. This follows by multiply each edge connected to the vertex $ (i,j) $ with $ a_{i-1,j}^{-1} $ if $ (i,j) $ is a black vertex and with $ a_{i,j-1}^{-1} $ if $ (i,j) $ is a white vertex.   

Note that
\begin{equation}
 \alpha_i\gamma_i = \frac{a_{i-1,kN+i-2}}{a_{i+1,kN+i-4}}\frac{a_{i,kN+i-4}}{a_{i-2,kN+i-2}} = 1,
\end{equation}
and similarly $ \beta_i\delta_i=1 $ for $ i=1,\dots,k $, by the periodicity condition on $ a_{i,j} $. Note also that
\begin{equation}
 \alpha_1\cdots \alpha_k=\frac{a_{0,kN-2}}{a_{-1,kN-1}}\beta_1\cdots\beta_{k-1}\frac{a_{k-1,kN-k-2}}{a_{k,kN+k-3}} = \beta_1\cdots\beta_k.
\end{equation}

\end{document}